\newtheorem{theorem}{Theorem}[section]
\newtheorem{definition}{Definition}[section]
\newtheorem{proposition}{Proposition}[section]
\newtheorem{lemma}{Lemma}[section]
\newtheorem{remark}{Remark}[section]
\newtheorem*{thm*}{}
\newtheorem*{theorem*}{Theorem}
\begin{document}
	\renewcommand{\theequation}{\arabic{section}.\arabic{equation}}
	\begin{titlepage}
		%
		\title{\bf Non-uniqueness of Leray-Hopf Solutions to Forced Stochastic Hyperdissipative Navier-Stokes Equations up to Lions Index}
		\author{ Weiquan Chen$^{1},$\ \  Zhao Dong$^{2},$\ \ Yang Zheng$^{3}$\thanks{Corresponding author}\\
			{\small $^{1,2,3}$  Academy of Mathematics and Systems Science, Chinese Academy of Sciences, Beijing 100190, China}\\
			{\small $^{2,3}$ University of Chinese Academy of Sciences, Beijing 100049, China}\\
			({\small chenweiquan@amss.ac.cn, dzhao@amt.ac.cn, zhengyang2020@amss.ac.cn })}
		\date{}
	\end{titlepage}
	\maketitle
	\noindent\textbf{Abstract}:\
	We show non-uniqueness of local strong solutions to stochastic hyperdissipative Navier-Stokes equations with linear multiplicative noise and some certain  deterministic force.
	Such non-uniqueness holds true even if we perturb such deterministic force in appropriate sense.
	This is closely related to a critical condition on force under which Leray-Hopf solution to the stochastic equations is locally unique.
	Meanwhile, by a new idea, we show that for some stochastic force the system admits two different global Leray-Hopf solutions smooth on any compact subset of $(0,\infty) \times \mathbb{R}^d$.
%
	
	
	\noindent\textbf{Keywords}: stochastic hyperdissipative Navier-Stokes equations, non-uniqueness, Leray-Hopf solutions.

	\tableofcontents
	
	\section{Introduction and Main Results}
	We consider Cauchy problem to stochastic hyperdissipative Navier-Stokes equations on $\mathbb{R}^d $ ($d = 2, 3$):
	\begin{equation}\label{FNSE_0}
		\left\{
		\begin{aligned}
			&{\rm d} u  =\Big(- u\cdot \nabla u  + \Lambda^\alpha u - \nabla p + f \Big) {\rm d}t  + \gamma u {\rm d}{\rm B}_t,
			\\ &\nabla \cdot u = 0,
			\\ &u|_{t=0} = u_0,
		\end{aligned}
		\right.	
	\end{equation}
	where $\gamma\in\mathbb{R}$, 
	$\Lambda^\alpha:=-(-\Delta)^{\frac{\alpha}{2}}$ denotes fractional Laplacian, and ${\rm B}$ a one dimensional Brownian motion on stochastic basis $ \big(\Omega, \mathscr{F}, (\mathscr{F})_{t \geq 0}, \mathbb{P}\big) $.
	%
	The velocity field $u$ and the gradient press $p$ are to be solved given initial velocity $u_0 \in L^2_{\rm div} := \left\{ u \in L^2(\mathbb{R}^d) : \nabla \cdot u = 0 \right\}$ and $f \in L^1_t L^2_x$.
	For convenience, we call "$f$" the "force term" and "$\gamma u {\rm d}{\rm B}_t$" the "noise term".\\

	\noindent$\bullet$ {\bf 3-D Navier-Stokes equations}
	
	Existence and uniqueness of global strong solutions to 2D Navier-Stokes equations was proved by Ladyzhenskaya \cite{2DNS1969},
	while for 3-dimension the story is much longer.
	Leray \cite{leray1934} and Hopf \cite{hopf1950} constructed a class of global weak solutions (in the sense of distribution) satisfying an energy inequality, which is called Leray-Hopf weak solution in the literature.
	Meanwhile, Fujita and Kato \cite{Kato1964,kato1984strong} used heat semigroup estimates and fixed point argument to construct the so-called mild solutions.
	As a result, them also proved existence of global strong solutions for small initial datum in $ H^{\frac{1}{2}} $\cite{Kato1964} and $ L^3 $\cite{kato1984strong}.
	These spaces are invariant under natural scaling of Navier-Stokes equations and are called "critical spaces" in literature concerning local well-posedness.
	Later, mild solutions are constructed in bigger critical spaces like $\dot{B}_{p,\infty}^{-1+\frac{3}{p}}$ and $BMO^{-1}$ in \cite{chemin1999} and \cite{koch2001BMO} respectively,
	and global smoothness for small initial datum and local well-posedness are also established.
	
	Since the Leray-Hopf weak solutions may be non-unique, many studies turn to uniqueness conditions. 
	Prodi \cite{prodi1959}, Serrin \cite{serrin1962interior}, and Ladyzhenskaya \cite{ladyzenskaya1967linear} introduced an important integrability condition $L^p(0,T;L^q(\mathbb{R}^d)), \ \frac{2}{p} + \frac{d}{q} \leq 1, \ 2<p<\infty$ (known as LPS condition)
	under which Leray-Hopf solutions are unique.
	Later, \cite{masuda1984weak,kozono1996remark} considered the case $p = \infty$ and prove the uniqueness with additional right continuous condition.
	Furthermore, \cite{Meyer1996,lions2001uniqueness}
	proved the uniqueness of weak solutions if there exist a weak solution $u \in C([0,T];L^3(\mathbb{R}^3))$.
	Through decades of development, the LPS condition has evolved into the present version:
	\begin{theorem*}[\textbf{Weak-strong Uniqueness}]
		Let $\alpha =2,\ \gamma=0$.
		Let $u$ be a weak solution to (\ref{FNSE_0}) and belong to following space:
	\begin{equation}
		X_T^{p,q} := \left\{
		\begin{aligned}
			L^p(0,T;L^q)& \ \  2\leq p <\infty
			\\ C([0,T];L^q)& \ \ p=\infty
		\end{aligned}
		\right.
%
	\end{equation}
	where $2 \leq p \leq \infty, \ d\leq q \leq \infty$ satisfy $\frac{2}{p} + \frac{d}{q} \leq 1$.
	Then $u$ is the unique weak solution in this space and is a Leray-Hopf solution.
	\end{theorem*}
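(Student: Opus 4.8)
The plan is to prove uniqueness by an $L^2$ energy estimate for the difference of two solutions, after first upgrading the given LPS solution $u$ to a genuine Leray--Hopf solution. Throughout write $b(a,b,c):=\int_{\mathbb R^d}(a\cdot\nabla)b\cdot c\,\mathrm dx$ for the trilinear form; recall that $b(a,c,c)=0$ and $b(a,b,c)=-b(a,c,b)$ whenever $\nabla\cdot a=0$. First I would show that membership in $X_T^{p,q}$ forces the energy equality. Since $u\in X_T^{p,q}\cap L^\infty_tL^2_x\cap L^2_t\dot H^1_x$, the Sobolev--interpolation bound used below makes $(u\cdot\nabla)u$ lie in a dual space in which $u$ is an admissible test function, so that $b(u,u,u)=0$ is licit; regularizing the equation in time (Steklov averaging) and passing to the limit then yields $u\in C([0,T];L^2)$ together with
\begin{equation}
\tfrac12\|u(t)\|_{2}^{2}+\int_0^t\|\nabla u\|_{2}^{2}\,\mathrm ds=\tfrac12\|u_0\|_{2}^{2}+\int_0^t\langle f,u\rangle\,\mathrm ds,
\end{equation}
so $u$ is itself a Leray--Hopf solution.

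Next, let $v$ be any Leray--Hopf solution with the same data, so that $v$ satisfies the analogous energy inequality. I would test the weak formulation of the $v$-equation against $u$ and that of the $u$-equation against $v$; here the LPS integrability of $u$ is exactly what makes the pairing $\langle (v\cdot\nabla)v,u\rangle$ and the duality $\langle\partial_t u,v\rangle$ meaningful. Adding the two identities gives, after integration,
\begin{equation}
\langle u(t),v(t)\rangle+2\int_0^t\langle\nabla u,\nabla v\rangle\,\mathrm ds=\|u_0\|_2^2+\int_0^t\big(\langle f,u\rangle+\langle f,v\rangle\big)\,\mathrm ds-\int_0^t\big(b(u,u,v)+b(v,v,u)\big)\,\mathrm ds.
\end{equation}
Setting $w:=u-v$ and combining this with the energy equality for $u$ and the energy inequality for $v$, the linear and force terms cancel and the algebraic identity $b(u,u,v)+b(v,v,u)=b(w,w,u)$ (obtained by expanding $v=u-w$ and using $b(a,c,c)=0$) collapses the estimate to
\begin{equation}
\tfrac12\|w(t)\|_2^2+\int_0^t\|\nabla w\|_2^2\,\mathrm ds\le\int_0^t b(w,w,u)\,\mathrm ds.
\end{equation}

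The heart of the matter is to bound the right-hand side by the dissipation plus an integrable multiple of $\|w\|_2^2$. Using $b(w,w,u)=-b(w,u,w)$, H\"older with $\tfrac1r+\tfrac12+\tfrac1q=1$, and the Gagliardo--Nirenberg inequality $\|w\|_{r}\le C\|w\|_2^{1-\theta}\|\nabla w\|_2^{\theta}$ with $\theta=\tfrac dq$, one gets $|b(w,w,u)|\le C\|u\|_q\|w\|_2^{1-\theta}\|\nabla w\|_2^{1+\theta}$. When $q>d$ Young's inequality absorbs $\|\nabla w\|_2^{1+\theta}$ into the dissipation and leaves the coefficient $\|u\|_q^{2/(1-\theta)}$; the scaling relation $\tfrac2p+\tfrac dq\le1$ is precisely what forces $\tfrac{2}{1-\theta}\le p$, so this coefficient lies in $L^1(0,T)$ and Gr\"onwall's lemma with $w(0)=0$ gives $w\equiv0$. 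The main obstacle is the borderline endpoint $p=\infty,\ q=d$, where $\theta=1$ and Gagliardo--Nirenberg degenerates into the Sobolev embedding $\dot H^1\hookrightarrow L^{2d/(d-2)}$, yielding only $|b(w,w,u)|\le C\|u\|_d\|\nabla w\|_2^2$, which cannot be absorbed unless $\|u\|_d$ is small. Here I would exploit the hypothesis $u\in C([0,T];L^d)$: split $u=u_1+u_2$ with $u_1$ smooth and $\|u_2\|_{L^\infty_tL^d_x}$ as small as desired, absorb the $u_2$-term into the dissipation, and integrate by parts in the $u_1$-term to bound it by $\|\nabla u_1\|_\infty\|w\|_2^2$, again closing via Gr\"onwall. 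Verifying rigorously that $u$ is an admissible test function in the weak formulation of $v$ (so that the cross-identity is valid with only the Leray--Hopf regularity of $v$) is the remaining technical point, handled by time-mollification and the Lions--Magenes lemma for $\tfrac{\mathrm d}{\mathrm dt}\langle u,v\rangle$.
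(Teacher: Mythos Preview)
The paper does not actually prove this theorem: it is stated in the Introduction as a classical background result, with attributions to Prodi, Serrin, Ladyzhenskaya, Masuda--Kozono, and Meyer/Lions--Masmoudi, and no argument is given. So there is no ``paper's own proof'' to compare against.

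That said, your sketch is essentially the standard Prodi--Serrin energy argument one finds in those references, and it is correct in outline: upgrade the LPS solution $u$ to the energy \emph{equality}, derive the cross-energy identity for $\langle u,v\rangle$ by testing each equation with the other solution, combine with the energy equality for $u$ and the energy inequality for $v$ to obtain
\[
\tfrac12\|w(t)\|_2^2+\int_0^t\|\nabla w\|_2^2\,\mathrm ds\le\int_0^t b(w,w,u)\,\mathrm ds,
\]
and close by H\"older/Gagliardo--Nirenberg/Young with Gr\"onwall in the subcritical range, treating the endpoint $q=d$ via the continuity-in-$L^d$ splitting. One small slip: you write ``using $b(w,w,u)=-b(w,u,w)$'' but then apply H\"older to the form $\int (w\cdot\nabla w)\cdot u$; the bound you actually use, $|b(w,w,u)|\le\|u\|_q\|w\|_r\|\nabla w\|_2$, comes from $b(w,w,u)$ directly, not from $b(w,u,w)$ (which would put a derivative on $u$ that you do not control in $L^q$). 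This is cosmetic --- the final inequality and the Gr\"onwall closure are right.
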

	
	In recent years, non-uniqueness has been studid intensivly
	in the field of Hydrodynamics PDEs since De Lellis and Sz\'ekelyhidi Jr. \cite{de2009euler,de2010admissibility,de2013dissipative,de2014dissipative} introduced the method of convex integration in
	constructing dissipative weak solutions to incompressible Euler equations. 
	A milestone in this direction is
	the proof of negative side of Onsager’s conjecture by Isett \cite{isett2018proof,buckmaster2019onsager}. For incompressible Navier-Stokes equations, convex integration was first used by Buckmaster, Colombo and Vicol \cite{buckmaster2021wild,buckmaster2020convex,buckmaster2021convex} to construct non-unique weak solutions with arbitrary pre-given energy profile. 
	Burczak, Modena and Sz\'ekelyhidi Jr. \cite{burczak2021non} showed ill-posedness for power-law flows and non-uniqueness for arbitrary $L^2_{\rm div}$-initial datum. These solutions obviously belong to the supercritical regime "$2/p + d/q > 1$". 
	As people wonder how close we could get to the critical line "$2/p+d/q = 1$", Cheskidov and Luo \cite{cheskidov20232,cheskidov2022sharp} prove
	"sharp non-uniqueness" results in space $L^p_t L^\infty$ $(1 \leq p < 2)$ for $d \geq 2$ and in $C_tL^p$ $(p < 2)$ for $d = 2$. To
	date, construction of Leray-Hopf weak solutions via convex integration method remains far from reach.
	
	%
	
	
	Very recently,
	Albritton, Bru{\'e} and Colombo \cite{Brue-NS,Brue2023gluing} construct non-unique Leray-Hopf solutions to 3D Navier-Stokes equations with specific non-zero force term. 
	In their words, the method is, intuitively, to find a "background solution" which is unstable for the Navier-Stokes dynamics in similarity variables and construct another solution via trajectories on the unstable manifold associated to the background solution.
	This approach is developed from Vishik's original work \cite{vishik2018instability-I,vishik2018instability-II} on 2D Euler equations.
	Worth mentioning,
	Albritton and Colombo \cite{AC-FNS} constructed non-unique Leray-Hopf solutions to the 2D hyperdissipative Navier-Stokes equations with specific force term.\\

	%
	%

	\noindent$\bullet$ {\bf Hyperdissipative Navier-Stokes equations}
	
	The existence of global smooth solution to hyperdissipative Navier-Stokes equations is proved by Lions \cite{lions1969quelques} for $\alpha \geq \frac{2+d}{2}$.
	The exponent $\frac{2+d}{2}$, known as Lions exponent, is suggested to be "critical" in the sense that the "total energy" $ \mathcal{E}(t) := \frac{1}{2} \|u(t)\|_{L^2}^2 + \int_{0}^{t} \|\Lambda^{\frac{\alpha}{2}}u(s)\|_{L^2}^2 {\rm d}s $ is invariant under natural scaling of hyperdissipative Navier-stokes equations whenever $\alpha = \frac{2+d}{2}$ \cite{colombo2021global} (when $d=2$, this corresponds to original 2D Navier-Stokes).
%
%
	In decades, many people spared no effort in showing the regularity of global solutions in supercritical case, i.e. $\alpha < \frac{2+d}{2}$.
	Some significant progress have been achieved in recent years. For example,
	%
	%
	Tao \cite{tao2010global} considered $\frac{ \Lambda^{(2+d)/2} }{\ln^{1/2}(2-\Delta)}$ instead of fractional Laplacian $ \Lambda^{\frac{2+d}{2}} $, and proved global regularity by a priori estimates.
	Very recently, Colombo and Haffter \cite{colombo2021global}
	made significant progress in this direction: they show that
	for any $M,\delta >0$, there exists an explicit $\varepsilon := \varepsilon(M, \delta) > 0$ such that the equations with fractional order $\alpha \in (\frac{5}{4} -\varepsilon,\frac{5}{4}]$ have a unique global smooth solution for initial datum $\|u_0 \|_{H^\delta} \leq M$.
%
	For non-uniqueness results via convex integration, we refer to \cite{li2024sharp} where sharp non-uniqueness of weak solutions in the supercritical regime($\alpha \geq 5/2$) is studied, and \cite{colombo2018ill} where non-unique Leray-Hopf weak solutions are constructed for $\alpha < 2/5$.\\
	
	\noindent$\bullet${\bf Stochastic Navier-Stokes equations}
	
	For stochastic PDEs, the compactness argument typically used in PDEs dose not apply directly.
	Viot \cite{viot1976solutions} obtained martingale weak solutions for a class of nonlinear stochastic PDEs by estimating the modulus of continuity of the approximating sequence. Later,
	Mikulevicius and Rozovskii \cite{mikulevicius2005global} obtained global martingale weak solutions for stochastic Navier-stokes equations, following\cite{Mikulevicius1999MartingalePF}.
	By a compactness and tightness argument concerning fractional Sobolev space, Flandoli and Gatarek \cite{flandoli1995martingale} proved the existence of martingale weak solutions to stochastic Navier-stokes equations with regular or coercive diffusion coefficient noise.
	Later, \cite{flandoli2008markov} constructed martingale Leray-Hopf solutions for additive noise.
	See also \cite{liu2015stochastic} for a variational frame work using Gelfand triples structure.
%
	
	An interesting topic in stochastic PDEs is that stochastic noise of suitable type may introduce regularization effects into the system, as suggested by a series of studies.
	For multiplicative noise,
	Vicol and Glatt-Holtz \cite{vicol2014local} obtained the global smooth solutions to stochastic Euler equations, and
	R\"ockner et al. \cite{noise2014} proved that the solutions to 3D Navier-stokes equations will not blow up with high probability for sufficiently small initial data.
%
	Transport noise was also found to have regularization effects. Flandoli et al. \cite{flandoli2010well} proved well-posedness of the stochastic transport equation and similar results to stochastic Navier-stokes equations were recently proven by Flandoli and Luo \cite{flandoli2021high}.
	\cite{Liu2024NonlinearNoise} also show the regularization to 3D stochastic Navier-stokes equations by nonlinear noise.
	
	
	For non-uniqueness results,
	Hofmanov\'a et al. \cite{BFH20,hofmanova2022ill,hofmanova2023nonuniqueness,hofmanova2023global} introduced convex integration into stochastic PDEs and proved non-uniqueness in law and non-uniqueness
	of Markov families for 3D stochastic Navier-Stokes equations. 
	See also \cite{chen2024sharp} where a new stochastic version of convex integration is introduced and a sharp non-uniqueness result is proved.
	%
	Non-uniqueness of Leray-Hopf solutions for stochastic Navier-Stokes equations with specific random force term is also obtained in \cite{zhu-2023non-arxiv,Zhang2023non} by developing the method 
	in \cite{Brue-NS}.\\
	%
	
	

\par Inspired by the idea of \cite{Brue-NS}, we prove that with a deterministic forcing term $f$, (\ref{FNSE_0}) admits non-unique probabilistically strong Leray-Hopf solutions. The construction in \cite{Brue-NS} relies on particular unstable background solutions (in similarity variables), any one of which determines a specific force term for the equations. In physical world, however, we have to ask the question from an opposite direction: as we are given another force term (which does not identify with that in \cite{Brue-NS}) in the equations, would the equations admit non-unique Leray-Hopf weak solutions so that in this case global regularity is falsified and singularity does exist by classical weak-strong-uniqueness? Or it is possible to give a criterion which characterizes an appropriate set of force terms that would lead to non-unique Leray-Hopf weak solutions? Also, if we are given a force term which is "close" to that in \cite{Brue-NS}, would it be able to decide that the wanted non-uniqueness holds true as supposed? The first two problems are quite intractable as they in some sense correspond to asking for a principle to divide the space of the force terms. In this paper, we address the last problem by establishing certain stability for the non-uniqueness mentioned at the beginning of this paragraph. 
	 
\par For (\ref{FNSE_0}) with stochastic force terms, we prove non-uniqueness of  global in time Leray-Hopf weak solutions by a new idea. Different from existing literature, the Leray-Hopf weak solutions are constructed so that they admit smoothness on compact subsets of $(0,+\infty)\times\mathbb R^d$.	

	\subsection{Main results}
	\begin{definition}\label{def-1.1}
		We say that the equation (\ref{FNSE_0}) has a local solution $ u $ if there exists a $\mathscr{F}_t$-stopping time $ \rho > 0 $ and a $ \mathscr{F}_t$-adapted process $ u $ 
		which satisfies the conditions:
		\\ (1) $u \in C\big([0,\rho); L^2\big)\cap L^2(0,\rho; H^{\frac{\alpha}{2} }), \  \mathbb{P}$ -$a.s.$ .
		\\ (2) $u$ satisfies (\ref{FNSE_0}) in the sense:
		\begin{equation}
			\begin{aligned}
				\langle u(t \wedge \rho) -u_0,\psi \rangle 
				=& \int_{0}^{t \wedge \rho} \langle u_i(s) u_j(s), \partial_i \psi_j \rangle {\rm d}s
				+ \int_{0}^{t \wedge \rho} \langle u(s),\Lambda^\alpha \psi \rangle {\rm d}s 
				\\ &+ \int_{0}^{t \wedge \rho} \langle f(s),\psi \rangle {\rm d}s
				+ \gamma \int_{0}^{t \wedge \rho} \langle u(s),\psi \rangle {\rm d}{\rm B}_s,
				\quad \forall\psi\in C^\infty_c (\mathbb{R}^d)~s.t.~\nabla \cdot \psi =0.
			\end{aligned}
		\end{equation}
		\\ (3) $u$ satisfies the energy inequality:
		\begin{equation}\label{Eq-energy-equ-u}
			\begin{aligned}
				&\|u(t \wedge \rho) \|_{L^2}^2 - \|u_0\|_{L^2}^2
				+ 2 \int_{0}^{t \wedge \rho} \|u(s)\|_{\dot{H}^{\frac{\alpha}{2}}}^2 {\rm d}s 
				\\ \leq& 2 \int_{0}^{t \wedge \rho} \langle f,u \rangle {\rm d}s +\gamma^2 \int_{0}^{t \wedge \rho} \|u(s)\|_{L^2}^2 {\rm d} s
				+2 \gamma \int_{0}^{t \wedge \rho} \|u(s)\|_{L^2}^2 {\rm d} {\rm B}_s \ \ \mathbb{P} - {\rm a.s.}
			\end{aligned}
		\end{equation}
	In case	$\rho = \infty$, we say that solution $u$ is global in time.
	\end{definition}
	\begin{theorem}\label{Thm-1.0}
		Let $d \in \{2,3\},\alpha \in ( 1+\frac{d}{4},1+\frac{d}{2} )$ and $ p_0 >{\frac{d}{\alpha-1} } $.
		There exist deterministic $f_0 \in L^1_t L^2_x (\mathbb{R}^+ \times \mathbb{R}^d) $
		such that for any $u_0 \in L^2_{\rm div} \cap L^{p_0}$ and $\gamma \in \mathbb{R}$, (\ref{FNSE_0}) with force term $f= f_0$ admits two local solutions $ u_1 $ and $ u_2 $ such that
		\begin{equation}
			\mathbb{P} \left( \  \|u_1(t)\|_{L^2_x} \neq \|u_2(t)\|_{L^2_x} \ {\rm for~ all} \ t \in (0,\rho) \  \right) =1
		\end{equation}
		for some positive stopping time $\rho$.
	\end{theorem}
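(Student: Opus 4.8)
The plan is to strip the multiplicative noise by a pathwise transform, reduce to a random forced hyperdissipative system, and then graft onto it an Albritton--Colombo type self-similar instability made robust enough to survive both the random coefficient produced by the transform and the presence of arbitrary subcritical initial data.

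First I would introduce the stochastic exponential $\mu_t := \exp\!\big(\gamma B_t - \tfrac{\gamma^2}{2}t\big)$, the solution of $\mathrm{d}\mu_t = \gamma\mu_t\,\mathrm{d}B_t$ with $\mu_0=1$, and seek solutions of the form $u=\mu_t v$. Since $v$ will solve a random PDE with no It\^o integral, It\^o's formula gives $\mathrm{d}u = \mu_t\,\mathrm{d}v + \gamma u\,\mathrm{d}B_t$, so that $v$ solves, pathwise,
\begin{equation}
\partial_t v + \mu_t\, v\cdot\nabla v - \Lambda^\alpha v + \nabla\tilde p = \mu_t^{-1} f_0,\qquad \nabla\cdot v=0,\qquad v|_{t=0}=u_0.
\end{equation}
As $\mu_t>0$ one has $\|u(t)\|_{L^2_x}=\mu_t\|v(t)\|_{L^2_x}$, so it suffices to produce, for a.e.\ path, two solutions $v_1,v_2$ with $\|v_1(t)\|_{L^2}\neq\|v_2(t)\|_{L^2}$ on a small interval; this gives the $\mathbb{P}=1$ statement at once, and the solutions $u_i=\mu_t v_i$ are automatically $\mathscr{F}_t$-adapted. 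Since only local solutions are needed, I would take $\rho$ to be the first exit time of $\mu_t$ from $[\tfrac12,2]$ intersected with a small deterministic time; then $\rho>0$ a.s.\ and on $[0,\rho]$ the factor $\mu_t$ is a small perturbation of $1$, so the random system is a pathwise perturbation of the deterministic forced equation.

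Next I would pass to similarity variables $\xi=xt^{-1/\alpha}$, $\tau=\log t$, $u(x,t)=t^{-(\alpha-1)/\alpha}U(\xi,\tau)$, which turns the unforced equation into an autonomous system $\partial_\tau U = \mathcal{L}U - \mathcal{P}\,(U\cdot\nabla U)$, where $\mathcal{L}$ collects the fractional dissipation and the scaling drift and $\mathcal{P}$ is the Leray projection. Following the program of Albritton--Colombo, I would fix a smooth, divergence-free, suitably decaying background profile $\bar U$ and \emph{define} the similarity force so that $\bar U$ is a steady state; in physical variables this is the self-similar $f_0(x,t)=t^{-(2\alpha-1)/\alpha}F(xt^{-1/\alpha})$, truncated in time, with $\alpha<1+\tfrac d2$ ensuring $f_0\in L^1_tL^2_x$. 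The analytic heart is to arrange that the linearization $\mathcal{L}-\mathcal{P}(\bar U\cdot\nabla\,\cdot\,)-\mathcal{P}(\,\cdot\,\cdot\nabla\bar U)$ has an eigenvalue $\lambda$ with $\Re\lambda>0$ and a smooth eigenfunction; the window $1+\tfrac d4<\alpha<1+\tfrac d2$ is what keeps this spectral construction and the functional setting (force, profile and eigenfunction in the correct spaces, finite energy after truncation) mutually consistent. The steady state yields the self-similar solution $\bar u$, while the unstable manifold of $\bar U$ yields a trajectory $U(\tau)\to\bar U$ as $\tau\to-\infty$ and hence a second physical solution $\tilde u$; their difference $w=\tilde u-\bar u$ has zero initial data and is nontrivial for $t>0$.

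To reach arbitrary $u_0\in L^2_{\rm div}\cap L^{p_0}$ and to land in the class $C_tL^2\cap L^2_tH^{\alpha/2}$, I would handle the data and the random coefficient together: using subcritical local well-posedness in $L^{p_0}$ with $p_0>\tfrac{d}{\alpha-1}$, construct a background solution $v_1$ of the random system from $u_0$, then glue in a spatially localized copy of the instability $w$ (concentrated at small scales near the origin) and correct for the interaction to obtain a second solution $v_2$ of the same initial value problem. Because the bubble is concentrated and, to leading order, mean-zero, $\|v_2(t)\|_{L^2}^2-\|v_1(t)\|_{L^2}^2=\|w(t)\|_{L^2}^2+2\langle v_1,w\rangle \geq c\,t^{2\beta}>0$ (for some $c,\beta>0$) for small $t\in(0,\rho)$, which is the required strict inequality along every path. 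The main obstacle is robustness: the random factor $\mu_t$ multiplying the transport term breaks exact self-similarity and makes the reduced system non-autonomous, so the spectral instability and the unstable-manifold trajectory must be shown to persist under this small pathwise perturbation on $[0,\rho]$ --- this is precisely the \emph{stability of the non-uniqueness} the paper isolates, and the same mechanism that allows $f_0$ itself to be perturbed. Intertwined with it, gluing the singular self-similar bubble onto the arbitrary subcritical background forces one to keep every nonlinear interaction between $v_1$ and $w$ subcritical, so that neither the instability nor the energy inequality of Definition~\ref{def-1.1} is destroyed; carrying this out uniformly while preserving $\Re\lambda>0$ is where the real work lies.
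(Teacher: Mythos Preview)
Your high-level strategy---exponential transform $u=\mu_t v$, similarity variables, a background profile $\bar U$ with an unstable eigenvalue defining $f_0$, and a second solution coming from the unstable direction---matches the paper exactly. Two points, however, diverge from what the paper actually does.

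\textbf{Initial data.} You propose to construct a full nonlinear background $v_1$ from $u_0$ by subcritical well-posedness and then \emph{glue in} a localized copy of the instability. The paper does something much simpler: it subtracts the \emph{linear} fractional heat flow $\mathring u = e^{t\Lambda^\alpha}u_0$ and writes $v=\mathring u+\varpi$ with $\varpi(0)=0$. In similarity variables the profile $\mathring U$ obeys $\|\mathring U(\tau)\|_{W^{1,p_0}}\lesssim e^{(1-\frac{p_0+d}{\alpha p_0})\tau}$, which decays as $\tau\to-\infty$ precisely because $p_0>\frac{d}{\alpha-1}$. So $\mathring U$ is just another small perturbation fed into the same abstract fixed-point scheme (the paper's Proposition~\ref{prop-2.1}) that already absorbs the random coefficient; no gluing, no scale separation, no control of cross terms $\langle v_1,w\rangle$ is needed. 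Your gluing route may work, but it imports difficulties the paper avoids entirely.

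\textbf{Random coefficient and separation.} Taking $\rho$ as the exit time of $\mu_t$ from $[\tfrac12,2]$ only gives $|H-1|$ bounded, not small. The paper uses the law of the iterated logarithm to get $|H(\tau)-1|\lesssim e^{\frac{2+d-2\alpha}{4\alpha}\tau}$ up to a positive stopping time, which is what makes $(H-1)\bar U$ a genuine $e^{\varepsilon\tau}$-perturbation in the fixed-point argument. Similarly, the separation of the two solutions is not obtained by estimating $\|w\|^2+2\langle v_1,w\rangle$ (your mean-zero heuristic would need justification); it comes directly in similarity variables from the fact that $e^{-a_0\tau}U_r(\tau)\to 0$ and $e^{-a_0\tau}U_p(\tau)\to 0$ while $e^{-a_0\tau}U_l(\tau)$ is periodic and nonzero, so $U_r\neq U_l+U_p$ for all $\tau$ near $-\infty$.
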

\begin{remark}
		The scale "$1+\frac{d}{2}$" is called Lions exponent. When $ \alpha \geq 1+\frac{d}{2} $, it is not hard to show that (\ref{FNSE_0}) has unique global in time solution in the sense of definition \ref{def-1.1}. 
	\end{remark}
	\begin{remark}
		In case $d=3,\alpha=2$, i.e. 3D stochastic Navier-Stokes equations, we construct non-unique solutions in $L^r_tL^s_x$ for all $2<r<\infty,3<s<\infty$ s.t. $\frac{2}{r}+\frac{3}{s}>1$.
%
	\end{remark}
	We note that $f_0$ in Theorem \ref{Thm-1.0} satisfies $\|f_0(t)\|_{L^2} = O (t^{\frac{2+d-4\alpha}{2\alpha}})$ near $t=0$, and such $f_0$ is clearly not unique. The following result show that the non-uniqueness obtained by Theorem \ref{Thm-1.0} is stable in the sense that it still holds if we perturb $f_0$ near $t=0$ in suitable sense. 
	\begin{theorem}\label{Thm-1}
		There exists an infinite subset $A \subset L^1_t L^2_x$ such that for every $f_0 \in A$: Theorem \ref{Thm-1.0} holds true
		and there exists a constant $\varepsilon_0 = \varepsilon_0(f_0) >0$ so that the same non-uniqueness still holds for (\ref{FNSE_0}) with every force term $f\in L^1_tL^2_x$ belongs to
%
		\begin{equation}
			\left\{ f_0 +f_1 \left| \ \|f_1(t)\|_{L^2} = O(t^{\frac{2+d-4\alpha}{2\alpha} + \varepsilon_0}) \ {\rm as}\  t \ \to 0 \right. \right\}.
		\end{equation}
		Furthermore, we can find a sequence $\{f_0^{n}\}$ with $\varepsilon_0 (f_0^{n}) \to 0$.
	\end{theorem}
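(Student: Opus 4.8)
The plan is to run the similarity-variable/unstable-manifold scheme underlying Theorem \ref{Thm-1.0} in a quantitative, perturbative form, so that a forcing perturbation $f_1$ decaying slightly faster than $f_0$ near $t=0$ enters only as a higher-order term. First I would recall the reduction used for Theorem \ref{Thm-1.0}: remove the multiplicative noise by the exponential substitution $u=\mu_t v$ with $\mu_t=e^{\gamma \mathrm{B}_t-\gamma^2 t/2}$, which turns (\ref{FNSE_0}) into a random equation for $v$ whose only trace of the noise is the scalar coefficient $\mu_t$ in front of the nonlinearity; since $\mu_t\to 1$ as $t\to 0$ almost surely, this coefficient is a vanishing perturbation near the singular time. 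Passing to self-similar coordinates $\xi=x\,t^{-1/\alpha}$, $\tau=\log t$, $u=t^{-(\alpha-1)/\alpha}U(\xi,\tau)$, the equation becomes $\partial_\tau U=\mathcal{L}U-\mu_{e^\tau}P(U\cdot\nabla U)+F$ with $\mathcal{L}U=\Lambda^\alpha U+\tfrac1\alpha\xi\cdot\nabla U+\tfrac{\alpha-1}{\alpha}U$ and $P$ the Leray projection. A self-similar force $f_0(t,x)=t^{-(2\alpha-1)/\alpha}F_0(\xi)$ corresponds exactly to $\|f_0(t)\|_{L^2}=O(t^{(2+d-4\alpha)/(2\alpha)})$, and $F_0$ is chosen so that the linearization $L_{ss}$ about an unstable steady background $\bar U$ has an eigenvalue $\lambda$ with $a:=\mathrm{Re}\,\lambda>0$ and eigenfunction $\eta$.

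The key observation is the scaling dictionary: a perturbation with $\|f_1(t)\|_{L^2}=O(t^{(2+d-4\alpha)/(2\alpha)+\varepsilon_0})$ corresponds to a similarity-variable forcing $F_1(\tau)$ with $\|F_1(\tau)\|_{L^2}=O(e^{\varepsilon_0\tau})$ as $\tau\to-\infty$. Writing $U=\bar U+V$, the trajectory $V$ satisfies $\partial_\tau V=L_{ss}V-\mu_{e^\tau}P(V\cdot\nabla V)-(\mu_{e^\tau}-1)\,P(\bar U\cdot\nabla\bar U)+\text{(cross terms)}+PF_1$. I would set $\varepsilon_0(f_0):=a$ (or any value exceeding it) and solve this by a Lyapunov--Perron fixed point in a weighted space of trajectories with $\|V(\tau)\|=O(e^{a\tau})$ as $\tau\to-\infty$. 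Projecting onto the unstable eigenspace and integrating the inhomogeneity from $\tau=-\infty$, the Duhamel integral $\int_{-\infty}^\tau e^{a(\tau-s)}\|F_1(s)\|\,ds$ converges precisely because $\varepsilon_0>a$, contributing a term of size $O(e^{\varepsilon_0\tau})$, strictly smaller than the free unstable mode $c\,e^{\lambda\tau}\eta$. The noise defect $\mu_{e^\tau}-1$ is pathwise $O(e^{\tau/2})$ up to logarithms by the law of the iterated logarithm, and likewise enters as a higher-order inhomogeneity (equivalently, as an admissible roughness of the exponential dichotomy of $L_{ss}$), so it does not alter the leading behavior for the weakly unstable backgrounds we retain.

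From the fixed point I would extract, for the force $f_0+f_1$, two solutions $v_1,v_2$ with the same data: a background-type trajectory with vanishing unstable component and a second one whose unstable component is the nontrivial mode $c\,e^{\lambda\tau}\eta$. Distinctness is robust and $f_1$-independent: the difference $w=v_2-v_1$ solves the \emph{homogeneous} equation $\partial_\tau w=L_{ss}w-\mu_{e^\tau}P(w\cdot\nabla v_2+v_1\cdot\nabla w)$ (the forcing cancels), and by construction $w\sim c\,e^{\lambda\tau}\eta\not\equiv 0$, so $\|v_1(\tau)\|_{L^2}\neq\|v_2(\tau)\|_{L^2}$ for all small $\tau$; since $u_i=\mu_t v_i$ only rescales the $L^2$ norm by the positive scalar $\mu_t$, transferring back yields $\mathbb{P}(\|u_1(t)\|_{L^2}\neq\|u_2(t)\|_{L^2}\ \text{for all } t\in(0,\rho))=1$ for a suitable stopping time $\rho$, exactly as in Theorem \ref{Thm-1.0} (arbitrary $u_0\in L^2_{\rm div}\cap L^{p_0}$ being accommodated by gluing the local solution from $u_0$ as there). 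I then take $A$ to be the infinite family of self-similar forces $f_0$ generated by the admissible unstable backgrounds $\bar U$ of Theorem \ref{Thm-1.0} (infinitely many, e.g. through the symmetries of the equation), with $\varepsilon_0(f_0)=\mathrm{Re}\,\lambda(f_0)>0$.

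The main obstacle is the final assertion $\varepsilon_0(f_0^n)\to 0$, which forces me to exhibit admissible backgrounds whose unstable eigenvalues satisfy $\mathrm{Re}\,\lambda(f_0^n)\to 0^+$, i.e.\ only marginally unstable profiles. I would obtain these from a quantitative spectral/bifurcation analysis of $L_{ss}$, tracking the eigenvalue as it crosses the imaginary axis under a deformation of $\bar U$ (an amplitude or scaling parameter, or the fractional order $\alpha$ within $(1+\tfrac d4,1+\tfrac d2)$), so that $\lambda$ can be pushed arbitrarily close to $0$ while staying strictly positive. The delicate point is keeping the Lyapunov--Perron scheme and the noise control uniform as $\mathrm{Re}\,\lambda\to 0$, where the spectral gap degenerates and constants deteriorate; one also checks $\tfrac12>\mathrm{Re}\,\lambda$ so the noise defect remains subordinate. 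Throughout, the admissible range of $\alpha$ is used to keep $f_0\in L^1_tL^2_x$ (this is the role of $\alpha<1+\tfrac d2$) and to secure the functional-analytic setting — compactness of the resolvent of $\mathcal{L}$ and the relevant embeddings — in which both $\lambda$ and the fixed point are controlled.
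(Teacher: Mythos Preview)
Your proposal is essentially the paper's own approach: exponential transform to kill the multiplicative noise, similarity variables, linearization about a smooth compactly supported background $\bar U$ with an unstable eigenvalue $a>0$ for $\mathbf{L}_{\bar U}$, and a contraction/Lyapunov--Perron argument (the paper's Proposition~\ref{prop-2.1}) in a weighted space $e^{-a_1\tau}\|\cdot\|$ with $a_1>a$; the perturbation $f_1$ becomes $F_1(\tau)=O(e^{\varepsilon_0\tau})$ and is absorbed precisely when $\varepsilon_0$ exceeds the spectral abscissa $a=w_0(\mathbf{L}_{\bar U})$. Two small corrections: (i) the initial datum $u_0$ is handled not by gluing but by subtracting the fractional heat flow $\mathring u$, whose similarity profile $\mathring U$ decays like $e^{(1-\frac{p_0+d}{\alpha p_0})\tau}$ and therefore enters only as another higher-order inhomogeneity; (ii) you need $a=w_0(\mathbf{L}_{\bar U})$ to equal the real part of an actual eigenvalue, not merely to have some unstable eigenvalue --- this is exactly the paper's Proposition~\ref{prop-3.1}(1)--(2), via the essential-growth-bound estimate $w_{ess}(\mathbf{L}_{\bar U})<0$.

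Your ``main obstacle'' is not one. The assertion $\varepsilon_0(f_0^n)\to 0$ requires no uniformity whatsoever: for each $n$ you run the fixed-point argument once, with its own constants and its own stopping time $\rho_n>0$, and nothing is claimed about $\inf_n\rho_n$. What you do need is a one-parameter family of backgrounds whose growth bounds sweep out $(0,A)$ for every $A>0$; the paper gets this by scaling the fixed background, $\bar U\mapsto \beta^{-1}g$, and proving (Proposition~\ref{prop-2.3}) that $\beta\mapsto w_0(\mathbf{L}_{\beta g})$ is continuous on $\{w_0\ge 0\}$, so the intermediate value theorem produces $a_n\downarrow 0$. This is exactly the ``amplitude parameter'' deformation you suggest, made precise; the noise and initial-data constraints $a<\frac{2+d-2\alpha}{8\alpha}$ and $a<\tfrac12\big(1-\tfrac{p_0+d}{\alpha p_0}\big)$ are then automatic for small $a$.
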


	\begin{remark}\label{alpha}
		If $u_0 =0 $, Theorem \ref{Thm-1} and \ref{Thm-1.0} hold for $\alpha \in ( \frac{1}{2}+\frac{d}{4},1+\frac{d}{2} )$.
	\end{remark}
		The asymptotic power $O (t^{\frac{2+d-4\alpha}{2\alpha}})$ is sharp for non-uniqueness as the following result holds:
%
	\begin{theorem}\label{Thm-2}
		Let $u_0$ be given as in Theorem \ref{Thm-1.0}. If $f \in L^1_t L^2_x$ satisfies
		$\|f(t)\|_{L^2} = O(t^{\frac{2+d-4\alpha}{2\alpha} + \varepsilon})$ as $t\rightarrow0$ for some $ \varepsilon>0$, 
		then solution to (\ref{FNSE_0}) is locally unique: there exists a $ \mathscr{F}_t$-stopping time $\rho >0$ such that for any solutions $u_1,\ u_2$, we have  $\mathbb P~\Big(u_1(\cdot\wedge\rho)=u_2(\cdot\wedge\rho)\Big)=1$. 
	\end{theorem}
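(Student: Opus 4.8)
The plan is to first de-randomize the equation, then build a locally unique strong solution lying in a subcritical Ladyzhenskaya--Prodi--Serrin (LPS) class, and finally run a relative-energy (weak--strong) argument forcing every Leray--Hopf solution to coincide with it. Set $\Phi_t = \exp(\gamma B_t - \tfrac{\gamma^2}{2}t)$, which solves $\mathrm{d}\Phi_t = \gamma\Phi_t\,\mathrm{d}B_t$ with $\Phi_0=1$, and substitute $u = \Phi_t v$. Since $v$ will have finite variation in $t$, It\^o's formula removes the stochastic integral and $v$ solves, pathwise, the random-coefficient system
\[ \partial_t v = \Lambda^\alpha v - \Phi_t\, v\cdot\nabla v - \nabla\tilde p + \Phi_t^{-1}f,\qquad \nabla\cdot v = 0,\qquad v|_{t=0}=u_0, \]
with $\tilde p = \Phi_t^{-1}p$. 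This is a bijection between $\mathscr F_t$-adapted solutions of (\ref{FNSE_0}) and of the transformed system, so it suffices to prove pathwise uniqueness for the latter on a random interval $[0,\rho)$. For each fixed $\omega$ the multiplier $\Phi_t(\omega)$ is continuous, strictly positive, and bounded above and below on compact time intervals, so it contributes only harmless constants.

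Second, I would construct a local strong solution $\bar v$ by a Kato-type fixed point. Writing $S(t)=e^{t\Lambda^\alpha}$ for the hyperdissipative semigroup and $\mathcal P$ for the Leray projection, the mild formulation is $\bar v(t) = S(t)u_0 - \int_0^t S(t-s)\mathcal P\big[\Phi_s\,\bar v\cdot\nabla\bar v\big]\,\mathrm{d}s + \int_0^t S(t-s)\mathcal P\big[\Phi_s^{-1}f(s)\big]\,\mathrm{d}s$. The smoothing estimate $\|S(t)g\|_{L^q}\lesssim t^{-\frac d\alpha(\frac12-\frac1q)}\|g\|_{L^2}$, together with $u_0\in L^{p_0}$ for the subcritical exponent $p_0>\tfrac{d}{\alpha-1}$, places the linear part in a time-weighted space on the scaling line $\frac\alpha r+\frac ds=\alpha-1$. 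The key point is the force term: since $\beta_c:=\frac{2+d-4\alpha}{2\alpha}$ is precisely the scaling-critical decay rate of $\|f(t)\|_{L^2}$, the hypothesis $\|f(t)\|_{L^2}=O(t^{\beta_c+\varepsilon})$ makes the Duhamel integral $\int_0^t S(t-s)\mathcal P[\Phi_s^{-1}f(s)]\,\mathrm{d}s$ gain a genuine positive power $t^{\varepsilon}$ through a convergent Beta integral, rather than merely reproducing the critical norm. This extra gain, plus the subcriticality of $u_0$, supplies the smallness that closes the contraction on a short interval $[0,\rho)$; bootstrapping the parabolic smoothing then places $\bar u=\Phi\bar v$ in the LPS class, with $\bar u\in L^r_tL^s_x$ on $\frac\alpha r+\frac ds\le\alpha-1$ and in particular $\nabla\bar u\in L^p_tL^q_x$ on $\frac\alpha p+\frac dq=\alpha$.

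Third, I would prove weak--strong uniqueness by a relative energy estimate. Given any Leray--Hopf solution $u$ of (\ref{FNSE_0}) in the sense of Definition \ref{def-1.1} and the strong solution $\bar u$ just built, set $w=u-\bar u$. Combining the energy inequality (\ref{Eq-energy-equ-u}) for $u$, the energy equality for the smooth $\bar u$, and the equation for $\bar u$ tested against $u$, one obtains a relative energy inequality
\[ \|w(t)\|_{L^2}^2 + 2\int_0^t\|w\|_{\dot H^{\alpha/2}}^2\,\mathrm{d}s \lesssim \int_0^t \big|\langle w\cdot\nabla\bar u,\,w\rangle\big|\,\mathrm{d}s; \]
the multiplicative-noise contributions cancel because $w$ again carries a linear $\gamma(\cdot)\,\mathrm{d}B$ structure, or are removed outright by working in the $\Phi$-variables. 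The nonlinear term is estimated by $\|\nabla\bar u\|_{L^q}\|w\|_{L^{2q'}}^2$ and the Gagliardo--Nirenberg interpolation $\|w\|_{L^{2q'}}\lesssim\|w\|_{L^2}^{1-\theta}\|w\|_{\dot H^{\alpha/2}}^{\theta}$ with $\theta=\frac{d}{\alpha q}$; since $\bar u$ sits on the LPS line one has $\theta<1$, so Young's inequality absorbs the dissipation and leaves $\tfrac{\mathrm d}{\mathrm dt}\|w\|_{L^2}^2\lesssim g(t)\|w\|_{L^2}^2$ with $g=\|\nabla\bar u\|_{L^q}^{1/(1-\theta)}\in L^1_t$. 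Gr\"onwall then forces $w\equiv0$, i.e. $u=\bar u$, up to a stopping time $\rho>0$, and applying this to both $u_1$ and $u_2$ yields $u_1(\cdot\wedge\rho)=u_2(\cdot\wedge\rho)$ almost surely.

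I expect the main obstacle to be the relative-energy step in the fractional, stochastic setting: verifying that the constructed solution meets exactly the hyperdissipative LPS threshold (so that $\theta<1$ and the dissipation is absorbable), while keeping all objects $\mathscr F_t$-adapted, handling the martingale terms and the random multiplier $\Phi_t$ pathwise, and ensuring $\rho$ is a genuine stopping time. The delicate book-keeping is that the force decay is used in a matched way twice --- once to gain the factor $t^{\varepsilon}$ in building $\bar u$, and implicitly again to push $\bar u$ strictly below the critical LPS line rather than onto it. This is precisely what the extra $\varepsilon$ buys, and what fails at the critical rate $\beta_c$, where Theorems \ref{Thm-1.0}--\ref{Thm-1} instead produce non-uniqueness.
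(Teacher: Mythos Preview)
Your three-step plan---exponential transform to a random PDE, construction of a local strong solution with subcritical regularity from the $t^{\varepsilon}$ gain in the force, then a relative-energy weak--strong argument---is exactly the paper's strategy. The one substantive difference is in the construction step: the paper does not run a Kato fixed point in physical variables but instead passes to similarity coordinates and applies the existence half of its abstract Proposition~\ref{prop-2.1} with $\mathbf L=\mathbf P_\alpha$ and $a_1=\varepsilon/2$, thereby recycling the machinery already built for the non-uniqueness theorems; this yields a solution with $\|\nabla v(t)\|_{L^2}\lesssim t^{(d-2\alpha)/2\alpha+\varepsilon/4}$, which is precisely the borderline-plus-$\varepsilon$ regularity you describe. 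For the weak--strong step the paper works with the specific interpolation $\|w\|_{L^4}^2\lesssim\|w\|_{H^{\alpha/2}}^{d/\alpha}\|w\|_{L^2}^{(2\alpha-d)/\alpha}$ and the endpoint $\|\nabla v\|_{L^2}$, and---this is the one point where your sketch is a bit loose---it takes \emph{expectations} throughout, because after the exponential transform the energy inequality for a generic Leray--Hopf $v'$ is only available in the form of Definition~\ref{def-1.2}, i.e.\ in $\mathbb E$; the stochastic integrals do not cancel pathwise in the $v$-variables, so the Gr\"onwall closes on $\sup_s\mathbb E\|w(s\wedge\rho')\|_{L^2}^2$ rather than pathwise. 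Your direct Kato route would also work and is arguably more self-contained, while the paper's similarity-variable route has the advantage of reusing Proposition~\ref{prop-2.1} verbatim.
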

	
	For (\ref{FNSE_0}) with stochastic force term, we also obtain the following non-uniqueness result. 
	\begin{theorem}\label{Thm-1.1}
		Let $d\in \{2,3\}$, $\alpha \in ( \frac{1}{2}+\frac{d}{4},1+\frac{d}{2} )$. Given $u_0=0$, $\gamma \geq 0$, there exist an $ \mathscr{F}_t$-adapted stochastic process $f\in L^1_{loc}\big(0,+\infty; L^2\big) \ \ \mathbb{P}-a.s.$ such that the equation (\ref{FNSE_0}) has two different global solutions which are smooth on compact subsets of $(0,+\infty)\times\mathbb R^d$.
	\end{theorem}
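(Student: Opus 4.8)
The plan is to remove the multiplicative noise by an exponential transform, reduce to a pathwise random PDE, and then produce the two solutions by a linear‑instability (unstable manifold) argument around a \emph{prescribed} self‑similar background, exploiting that we are free to read off the force afterwards. Since the statement allows $\gamma\ge 0$ (including $\gamma=0$) and fixes $u_0=0$, the essential novelty is not the use of noise but the globalisation: producing non‑unique solutions that are global and smooth on compacts of $(0,\infty)\times\mathbb R^d$ rather than merely local.

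First I would reduce to a random PDE. Set $\Phi_t=\exp\!\big(\gamma B_t-\tfrac{\gamma^2}{2}t\big)$, which is a.s.\ continuous, strictly positive, and satisfies $\Phi_t\to 1$ as $t\to 0^+$ with $|\Phi_t-1|=O(|B_t|)$. Writing $u=\Phi_t v$ and using $\mathrm d\Phi_t=\gamma\Phi_t\,\mathrm dB_t$, Itô's formula shows that $u$ solves (\ref{FNSE_0}) with force $f$ if and only if the bounded‑variation (in $t$) field $v$ solves, pathwise,
\[
\partial_t v+\Phi_t\,(v\cdot\nabla)v-\Lambda^\alpha v+\nabla q=g,\qquad \nabla\cdot v=0,\qquad v|_{t=0}=0,
\]
with $q=p/\Phi_t$ and $g=f/\Phi_t$. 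As $\Phi_t>0$ is a random function of $t$ alone, spatial smoothness and global existence transfer between $u$ and $v$, and $g\in L^1_{\mathrm{loc}}(0,\infty;L^2)$ iff $f$ does. Thus it suffices to exhibit two distinct $\mathscr F_t$‑adapted solutions $v_1\neq v_2$ of this random PDE, smooth in $x$ for $t>0$, sharing one adapted force $g$.

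Next I would fix a smooth, divergence‑free, Schwartz profile $V$ on $\mathbb R^d$ and take the self‑similar field $\bar v(t,x)=t^{-\frac{\alpha-1}{\alpha}}V\!\big(t^{-1/\alpha}x\big)$. A direct scaling computation gives $\|\bar v(t)\|_{L^2}=t^{\frac{d+2-2\alpha}{2\alpha}}\|V\|_{L^2}\to 0$ as $t\to0$ (using $\alpha<1+\tfrac d2$), so $\bar v$ has zero initial data. Letting $\Pi$ be the Helmholtz–Leray projection, I would simply \emph{define} the force by
\[
g:=\partial_t\bar v-\Lambda^\alpha\bar v+\Phi_t\,\Pi\big[(\bar v\cdot\nabla)\bar v\big],
\]
so that $v_1:=\bar v$ is an exact solution. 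One checks $\|g(t)\|_{L^2}=O\big(t^{\frac{2+d-4\alpha}{2\alpha}}\big)$ near $t=0$ (the deterministic terms set the rate; $\Phi_t$ is bounded there), hence $f=\Phi_t g$ is adapted and lies in $L^1_{\mathrm{loc}}(0,\infty;L^2)$ precisely when $\tfrac{2+d-4\alpha}{2\alpha}>-1$, i.e.\ $\alpha<1+\tfrac d2$, matching the sharp rate of Theorem \ref{Thm-1.0}; the improved lower threshold $\tfrac12+\tfrac d4$ is what becomes available once $u_0=0$. The structural requirement I impose on $V$ is that the similarity‑linearised hyperdissipative operator about $V$ has an eigenvalue with strictly positive real part.

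The second solution then comes from the unstable manifold. In similarity variables $\tau=\tfrac1\alpha\log t$, $\xi=t^{-1/\alpha}x$, the background becomes the stationary profile $V$, and a competing solution with the same force corresponds to a trajectory $\mathcal V(\tau)$ of the non‑autonomous system
\[
\partial_\tau W=\mathcal L W+\Phi_{t(\tau)}\,Q(W),\qquad W=\mathcal V-V,
\]
where $\mathcal L$ is the similarity‑linearised operator about $V$ and $Q$ the quadratic remainder. Since $\Phi_{t(\tau)}\to 1$ with error exponentially small in $\tau$ as $\tau\to-\infty$, near $\tau=-\infty$ this is an exponentially small non‑autonomous perturbation of the deterministic autonomous system; I would invoke persistence of the unstable manifold (pathwise) and select a nontrivial trajectory with $W(\tau)\sim c\,e^{\lambda\tau}e_\lambda$, $\mathrm{Re}\,\lambda>0$, as $\tau\to-\infty$. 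Undoing the scaling yields $v_2=\bar v+w$ with $w\not\equiv0$ yet $w(t)\to0$ in $L^2$, so $v_2(0)=0=v_1(0)$ but $v_1\neq v_2$, and the unstable rate separates $\|v_1(t)\|_{L^2}$ from $\|v_2(t)\|_{L^2}$ for small $t$. For $t>0$, $\bar v$ is smooth because $V$ is Schwartz, and parabolic smoothing of $-\Lambda^\alpha$ together with energy estimates (using $\Phi_t$ bounded on compacts) upgrades $w$ to $C^\infty_x$ and propagates global existence, so $u_i=\Phi_t v_i$ are global and smooth on compacts of $(0,\infty)\times\mathbb R^d$. I expect the main obstacle to be the spectral input: constructing a divergence‑free profile $V$ whose similarity‑linearised hyperdissipative operator $\mathcal L$ admits an unstable eigenvalue uniformly across the entire range $\alpha\in(\tfrac12+\tfrac d4,\,1+\tfrac d2)$ — the $\Lambda^\alpha$ analogue of the Vishik–Albritton–Bru\'e–Colombo construction — together with the uniform pathwise control of the vanishing random perturbation $\Phi_{t(\tau)}-1$ needed to keep the emanating trajectory on the unstable manifold global and smooth.
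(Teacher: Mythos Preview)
Your reduction to the random PDE via the exponential transform and the passage to similarity variables are exactly as in the paper, and so is the spectral input (a profile $\bar U$ for which the similarity-linearised operator $\mathbf L_{\bar U}$ has an unstable eigenvalue). Where your argument diverges from the paper --- and where it has a genuine gap --- is in the construction of the second solution and, consequently, in the \emph{globalisation}.

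You propose to define the force so that the pure self-similar field $\bar v$ (equivalently $\bar U$ in similarity variables) is an exact solution, and then to obtain a second solution via an unstable-manifold/fixed-point construction of a perturbation $w$. That is precisely the mechanism behind the \emph{local} result (Theorem~\ref{Thm-1.0}); the fixed-point argument of Proposition~\ref{prop-2.1} only produces $U_p$ on $(-\infty,T_0)$. Your sentence ``parabolic smoothing of $-\Lambda^\alpha$ together with energy estimates \dots\ propagates global existence'' is not justified: for $\alpha<1+\tfrac d2$ we are strictly below the Lions exponent, the energy class $L^\infty_tL^2_x\cap L^2_t\dot H^{\alpha/2}$ does not control the nonlinearity, and there is no a~priori reason your nonlinear perturbation $w$ (which solves a genuinely quadratic equation) does not blow up in finite time. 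The whole point of Theorem~\ref{Thm-1.1} is that this obstacle is bypassed by a different, explicit construction.

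The paper's idea is algebraic rather than perturbative. One first solves the \emph{linear} non-autonomous equation
\[
\partial_\tau U_\ell=\mathbf L_{\bar U}U_\ell-(H-1)\,\widetilde{\mathbf B}(\bar U,U_\ell),
\]
which, being linear with smooth coefficients, admits a nontrivial solution in $C(\mathbb R;H^N)$ for every $N$ and is automatically \emph{global}. Then one does \emph{not} take $\bar U$ as a solution; instead one sets
\[
U_1:=\bar U-\tfrac12 U_\ell,\qquad U_2:=\bar U+\tfrac12 U_\ell,
\]
and \emph{defines} the force so that $U_1$ solves the nonlinear equation. A direct computation gives
\[
F=H\Big(-\mathbf P_\alpha\bar U+H\,\mathbf B(\bar U,\bar U)+\tfrac14 H\,\mathbf B(U_\ell,U_\ell)\Big),
\]
and the crucial observation is that the extra quadratic piece $\tfrac14 H\,\mathbf B(U_\ell,U_\ell)$ makes the force \emph{symmetric} in $U_\ell\mapsto -U_\ell$, so that $U_2$ solves the very same equation with the very same $F$. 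Both $U_1,U_2$ are then global and smooth because $\bar U\in C_c^\infty$ and $U_\ell\in C(\mathbb R;H^N)$ for all $N$; no nonlinear continuation argument is needed. This is the ``new idea'' alluded to in the introduction, and it is what you are missing.
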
	

	\subsection{Notations}
	We denote:
	\begin{equation}
		L^p_x := L^p(\mathbb{R}^d). \ \ L^2_{\rm div} := \{ f(x) \in L^2(\mathbb{R}^d) : \nabla \cdot f(x) = 0 \},\ \ H^n_{\rm div} := H^n(\mathbb{R}^d) \cap L^2_{\rm div}.
	\end{equation}
	
	Let $X,Y$ are two Banach spaces. We denote by $\mathscr{B}(X,Y)$ the bounded linear operator form $X$ to $Y$, by $\mathscr{C}(X,Y)$ the compact linear operator form $X$ to $Y$.
	Let $A \in \mathscr{B}(X,X)$. We denote by $r(A) := \sup \{ |\lambda|:\lambda \in \sigma(A) \}$ the spectral radius.
	
	Let $A$ be a densely defined linear operator on a Banach space $X$. We denote by $\sigma(A)$ the spectrum of $A$, by $\sigma_P(A)$ the point spectrum of $A$, by $\sigma_{ess}(A)$ the essential spectrum of $A$,
	by $S(A) := \displaystyle\sup \{{\rm Re}(z):z\in \sigma(A)\}$ the spectral bound of $A$.
	We call $ \sigma_d(A) := \sigma(A) / \sigma_{ess}(A)$ the discrete spectrum of $A$.
	If $\lambda \notin \sigma(A)$, we denote $ \left(\lambda {\rm Id} - A\right)^{-1} $ by $R(\lambda,A)$.
	
	Let $A$ be a generator of a strongly continuous semigroup $e^{tA}$ on $X$. We denote by $ w_0 (A) := \inf \{ a\in \mathbb{R}: \exists M< \infty,st\ \| e^{tA}\| \leq M e^{at}  \}$ the growth bound of $e^{tA}$, 
	by $w_{ess}(A) := \{ \displaystyle\inf_{t>0} \log \|e^{tA}\|_{ess} \}$ the essential growth bound where $\|e^{tA}\|_{ess} := \inf\{ \|e^{tA} -K\| : K \in \mathcal{K}(X) \}$.

	\subsection{Main steps and propositions}\label{strategy}
	
	In this subsection, we outline main steps and main propositions that leads to Theorem \ref{Thm-1.0} and \ref{Thm-1}. The strategy here is inspired by \cite{Brue-NS}. 
	
	First, we transform (\ref{FNSE_0}) into random system via the exponential formula
	%
	\begin{equation}\label{exp-tra}
		\begin{aligned}
			u = e^ {\gamma {\rm B}_t - \frac{1}{2}\gamma^2 t} v,
		\end{aligned}
	\end{equation}
	Applying It\^{o}'s formula to $v$, (\ref{FNSE_0}) is rearranged as
	\begin{equation}\label{Eq-v}
		\left\{
		\begin{aligned}
			&\partial_t {v} + h(t)^{-1} \mathbf{P} ({v} \cdot \nabla {v}) + \Lambda^\alpha {v} = h(t){\big(\underbrace{f_0+f_1}_{=f}\big)},
			\\ &\nabla \cdot {v} = 0,
			\\ &{v}|_{t=0} = {u}_0,
		\end{aligned}
		\right.	
	\end{equation}
	where $ \mathbf{P} $ denotes Leray projector and $h(t) =e^ {-\gamma {\rm B}_t +\frac{1}{2}\gamma^2 t}$. Our task is to determine a specific $f_0$ and an appropriate "range" for $f_1$. Solution to (\ref{Eq-v}) is defined in the following sense:
	\begin{definition}\label{def-1.2}
		We say that the equation (\ref{Eq-v}) has a local solution $ v $ if there exists a $\mathscr{F}_t$-stopping time $ \rho > 0 $ and a $ \mathscr{F}_t$-adapted process $ v $ 
		which satisfies the conditions:
		\\ (1) $v \in C\big([0,\rho); L^2\big)\cap L^2(0,\rho; H^{\frac{\alpha}{2}}), \  \mathbb{P}$ - {\rm a.s.}.
		\\ (2) $v$ solves (\ref{Eq-v}) in the sense:
		\begin{equation}
			\begin{aligned}
				\langle v(t \wedge \rho) -u_0,\psi \rangle 
				=& \int_{0}^{t \wedge \rho} h(s)^{-1}  \langle v_i(s) v_j(s), \partial_i \psi_j \rangle {\rm d}s
				\\ &+ \int_{0}^{t \wedge \rho} \langle v(s),\Lambda^\alpha \psi \rangle {\rm d}s 
				+ \int_{0}^{t \wedge \rho} \langle h(s)f(s),\psi \rangle {\rm d}s,\quad \forall\psi\in C^\infty_c (\mathbb{R}^d)~s.t.~\nabla\cdot\psi=0. 
			\end{aligned}
		\end{equation}
		\\ (3) $v$ satisfies the energy inequality:
		\begin{equation}\label{Eq-energy-ine-v}
			\mathbb{E} \|v(t \wedge \rho) \|_{L^2}^2 - \|u_0\|_{L^2}^2  + 2 \mathbb{E} \int_{0}^{t \wedge \rho} \|v(s)\|_{\dot{H}^{\frac{\alpha}{2}}}^2 {\rm d}s 
			\leq 2 \mathbb{E} \int_{0}^{t \wedge \rho} \langle h(s) f(s),v(s) \rangle {\rm d}s .
		\end{equation}
		We say that solution $u$ is global if $\rho = \infty$.
	\end{definition}

	Solutions to (\ref{FNSE_0}) and (\ref{Eq-v}) can be transformed into each other (see subsection \ref{subs-eq}), so we work on (\ref{Eq-v}). To adapt the strategy to Cauchy problem, we also need to subtract from (\ref{Eq-v}) the (fractional) Stokes system:
	\begin{equation}\label{eq-stokes}
		\left\{
		\begin{aligned}
			&\partial_t \mathring{u}  + \Lambda^\alpha \mathring{u}= 0,
			\\ &\nabla \cdot \mathring{u} = 0,
			\\ &\mathring{u}|_{t=0} = u_0,
		\end{aligned}
		\right.
	\end{equation}
	So by $v=\mathring{u}+\varpi$ , we turn to solve
	\begin{equation}\label{Eq-varpi}
		\left\{
		\begin{aligned}
			&\partial_t {\varpi} + h(t)^{-1} \mathbf{P}\Big((\mathring{u}+\varpi)\cdot \nabla (\mathring{u}+\varpi)\Big) + \Lambda^\alpha {\varpi} = h(t){\big(f_0+f_1\big)},
			\\ &\nabla\cdot {\varpi} = 0,
			\\ \big.&{\varpi}\big|_{t=0} = 0.
		\end{aligned}
		\right.	
	\end{equation}
	Next, we move on to the similarity coordinate given by
	\begin{equation}\label{tra-1}
		{\xi} = \frac{{x}}{ t^{\frac{1}{\alpha}} } \ , \ \tau = \text{log}\ t.
	\end{equation}
	and take substitution
	\begin{equation}\label{tra-2}
		{\mathsf W}(\xi,\tau) = t^{-\frac{1}{\alpha}+1} {\varpi}(x,t) \  ,
		\ {F_i}(\xi,\tau) = t^{2 - \frac{1}{\alpha}}{f_i}(x,t)~,~(i=0,1)
	\end{equation}
	\begin{equation}\label{tra-2'}
		\mathring{U}(\xi,\tau) = t^{-\frac{1}{\alpha}+1} {\mathring{u}}(x,t) \  ,
	\end{equation}
	then (\ref{Eq-varpi}) reads
	\begin{equation}\label{Eq-varpi'}
		\left\{
		\begin{aligned}
			&\partial_\tau {\mathsf W} -\frac{\alpha-1}{\alpha} {\mathsf W} - \frac{1}{\alpha} {\xi} \cdot \nabla_\xi {\mathsf W} + H \mathbf{P}\Big(({\mathsf W}+\mathring{U})\cdot\nabla_\xi({\mathsf W}+\mathring{U})\Big) = \Lambda^\alpha_\xi {\mathsf W} + H^{-1}{\big(\underbrace{F_0+F_1}_{=F}\big)} ,
			\\ &\nabla_\xi \cdot {\mathsf W} =0,
			\\ &\lim_{\tau \to -\infty} e^{\frac{2+d-2\alpha}{2\alpha} \tau } \|{\mathsf W}(\tau)\|_{L^2}=0,
		\end{aligned}
		\right.		
	\end{equation}
	where $\mathbf{P}={\rm Id}-\nabla\Delta^{-1}{\rm div}={\rm Id}_\xi-\nabla_\xi~\Delta^{-1}_\xi{\rm div}_\xi$ and $H = H(\tau) := h(e^\tau)^{-1} \in C(\mathbb{R})$. A subtle point here is that the limit $\displaystyle\lim_{\tau \to -\infty} e^{\frac{2+d-2\alpha}{2\alpha} \tau } \|{\mathsf W}(\tau)\|_{L^2}=0$ is equivalent to $\displaystyle\lim_{t\rightarrow0}\|\varpi(t)\|_{L^2}=0$. So this means that if we solve (\ref{Eq-varpi}) via (\ref{Eq-varpi'}), then the initial value is attached to the equations in the sense of $L^2$-limit. 
	
	\par Note that our task now is to determine $F_0$ and an appropriate range for $F_1$, for which (\ref{Eq-varpi'}) admits non-unique solutions. Construction of $F_0$ depends on a suitable divergence-free "background flow" $\bar{U}\in C^\infty_c\big(\mathbb R^d_\xi\big)$ (independent of $\tau$):
	\begin{align}\label{F_0}
		{F}_0 :=& \frac{\alpha-1}{\alpha} \bar{U} + \frac{1}{\alpha}\xi\cdot \nabla_\xi \bar{U} +\mathbf{P} (\bar{U}\cdot\nabla_\xi\bar{U}) - \Lambda_\xi^\alpha \bar{U},
	\end{align}
	and we are looking for solutions to (\ref{Eq-varpi'}) in the form
	\begin{align}\label{W=bar{U}+U_r}
		\mathsf W(\tau,\xi)=\bar{U}(\xi)+U_r(\tau,\xi).
	\end{align} 
	Inserting (\ref{W=bar{U}+U_r}) and (\ref{F_0}) into (\ref{Eq-varpi'}), we arrive at
	\begin{equation}
		\left\{
		\begin{aligned}
			&\partial_\tau U_r -\frac{\alpha-1}{\alpha} U_r - \frac{1}{\alpha} \xi \cdot \nabla_\xi U_r + H \mathbf{P}(U_r \cdot \nabla_\xi U_r + (\bar{U}+\mathring{U}) \cdot \nabla_\xi U_r +U_r \cdot \nabla_\xi (\bar{U}+\mathring{U})) 
			\\ &= \Lambda_\xi^\alpha U_r + F_r - (H-1) \mathbf{P} (\bar{U} \cdot \nabla_\xi \bar{U} ) -H \mathbf{P}(\mathring{U} \cdot \nabla_\xi \bar{U} + \bar{U} \cdot \nabla_\xi \mathring{U} ) - H \mathbf{P}(\mathring{U}\cdot\nabla_\xi\mathring{U}) ,
			\\ & \nabla_\xi \cdot U_r =0,
			\\ & \lim_{\tau \to -\infty} e^{\frac{2+d-2\alpha}{2\alpha} \tau } \|U_r(\tau)\|_{L^2}=0.
		\end{aligned}
		\right.
	\end{equation}
	with
	\begin{align}
		F_r :=& H(\tau)^{-1}F - {F}_0 = H(\tau)^{-1}F_1 +  H(\tau)^{-1}F_0 - {F}_0~.
	\end{align}
	For notational simplicity, we introduce the notations
	\begin{align}
		\mathbf{B}({U},\, {V}) &:= -\mathbf{P} (U\cdot\nabla_\xi {V})~,\nonumber\\
		\widetilde{\mathbf{B}}({U},\, {V}) &:= \mathbf{B}({U},\, {V})+ \mathbf{B}({V},\, {U})~,\nonumber\\
		\mathbf{L}(\bar{U})=\mathbf{L}_{\bar{{U}}} ( {U} ) &:= \frac{\alpha-1}{\alpha} {U} + \frac{1}{\alpha} \xi \cdot \nabla_\xi {U} -\mathbf{P} (\bar{{U}} \cdot \nabla_\xi {U} +{U} \cdot \nabla_\xi \bar{{U}}) + \Lambda_\xi^\alpha {U}~,\label{Define-L_U^-}
	\end{align}
	to write the equations in an abstract form:
	\begin{equation}\label{Eq-U_r}
		\left\{
		\begin{aligned}
			&\partial_\tau {U_r} 
			= \mathbf{L}_{\bar{{U}}}( U_r) + H \mathbf{B}( {U_r},\, {U_r} ) + \widetilde{\mathbf{B}} \big(  {U_r},\, \widetilde{{U}}  \big)
			+ \widetilde{{F}} ,
			\\ & \nabla_\xi \cdot  {U_r} =0,
			\\ & \lim_{\tau \to -\infty} e^{\frac{2+d-2\alpha}{2\alpha} \tau } \|{U_r}(\tau)\|_{L^2}=0,
		\end{aligned}
		\right.
	\end{equation}
	where $\widetilde{U} := (H-1)\bar{U}+H\mathring{U}$, $\widetilde{F} = F_r - (H-1) \mathbf{P} (\bar{U} \cdot \nabla \bar{U} ) -H \mathbf{P}(\mathring{U} \cdot \nabla \bar{U} + \bar{U} \cdot \nabla \mathring{U} ) - H \mathbf{P}(\mathring{U} \cdot \nabla\mathring{U}) $.  

	We have made a general frame work on an abstract equation, which applies to our case. The abstract equation is set as follows. Let $\mathcal H$ be a Hilbert space and $\mathcal V\hookrightarrow\mathcal H$ a Banach space. Now we are given a densely defined linear operator $\mathbf{L}$ which generates a semigroup $ e^{\tau\mathbf{L}} $ on $\mathcal H$; and a densely defined bilinear operator $\mathbf{B}:\mathcal H\times\mathcal H\rightarrow\mathcal H$	with $\mathcal D(\mathbf{B})\subset\mathcal V$ and satisfying $\|\mathbf{B}(U, V)\|_{\mathcal H}\leq C\|U\|_{\mathcal V} \|V\|_{\mathcal V}$. Also, we have some fixed $\widetilde{U}\in C_{loc}\big(\mathbb R; \mathcal V\big)$, $\widetilde{F}\in C_{loc}\big(\mathbb R; \mathcal H\big)$ and $H\in C_{loc}(\mathbb R)$. The equation reads:
	\begin{equation}\label{Eq-Normal}
		\left\{
		\begin{aligned}
			&\partial_\tau U = \mathbf{L}U + H \mathbf{B}(U,\, U) + \underbrace{\mathbf{B}(U, \widetilde{U}) + \mathbf{B}(\widetilde{U}, U)}_{=:\widetilde{\mathbf{B}}(U,\, \widetilde{U})} + \widetilde{F},\quad\tau\in\mathbb R,
			\\ & \lim_{\tau \to -\infty} e^{c\tau } \|{U}(\tau)\|_{\mathcal H}=0~for~some~c>0.
		\end{aligned}
		\right.
	\end{equation}\

	\begin{proposition}\label{prop-2.1}
		Let $\mathcal V'\hookrightarrow \mathcal V$ be a Banach space and
		$\mathbf{L},\widetilde{U},\widetilde{F}$ satisfy following conditions with some $\varepsilon,t_0>0$, $\gamma_0 \in (0,1)$ and $a_1 >a_0>0$ :\\
		\noindent {\bf (A1)} $ \displaystyle\sup_{\tau\geq 0}\left( e^{-a_0\tau}\big\|e^{\tau\mathbf{L}}\big\|_{\mathcal H \to\mathcal H}\right)< \infty$, $\displaystyle\sup_{\tau\in (0,t_0)}\left(\tau^{\gamma_0}\big\| e^{\tau\mathbf{L}}\big\|_{\mathcal H \to \mathcal V'}\right)<\infty $ ; \\
		{\bf (A2)} $\displaystyle\sup_{\tau\leq 0}\big\|e^{-\varepsilon \tau} \widetilde{U}(\tau)\big\|_{\mathcal V} \leq \infty$,
		$\displaystyle\sup_{\tau \leq 0}\big\|e^{-(a_1+\varepsilon) \tau} \widetilde{F}(\tau)\big\|_{\mathcal H} \leq \infty$ ,\\		
		\noindent then there exists $T_0 > -\infty$ such that (\ref{Eq-Normal}) has a solution $ U \in C \big( (-\infty, T_0);\mathcal V' \big) $ satisfying 
		\begin{align}
			\sup_{\tau < T_0}\big\| e^{-a_1 \tau} U(\tau)\big\|_{\mathcal V'} \leq \infty.\nonumber
		\end{align}
		\noindent Furthermore, if $\mathbf{L}$ satisfies additionally:\\
		\noindent{\bf (A3)} there exists $\eta_0 \in V'$ such that $\mathbf{L}(\eta_0) = z_0 \eta_0$ with $z_0 = a_0 +i b_0$~,
		
		\noindent then (\ref{Eq-Normal}) has two solutions $ U_1,U_2\in C \big((-\infty, T_0); V'\big)$ satisfying 
		\begin{align}
			\sup_{\tau < T_0,i=1,2}\big\| e^{-a_0 \tau} U_i(\tau)\big\|_{\mathcal V'}< \infty,\nonumber
		\end{align}
		with $T_0=\inf\bigg\{t\in\mathbb R: \displaystyle\sup_{\tau \leq t} \Big(\| \widetilde{U}(\tau) \|_{\mathcal V} + \|e^{-a_1 \tau} \widetilde{F}(\tau)\|_{\mathcal H} + |H(\tau)|\Big)\leq C_0 \bigg\}$. Here $C_0>0$ is a constant depends on spaces $\mathcal H,\mathcal V,\mathcal V'$ and operators $ \mathbf{L},\mathbf{B}$. The solutions can be represented by
		\begin{align}
			U_i(\tau)=\int_{-\infty}^{\tau} e^{(\tau -s) \mathbf{L}} \left( H(s) \mathbf{B}(U_i(s),\, U_i(s)) + \widetilde{\mathbf{B}}(U_i(s),\, \widetilde{U}(s)) + \widetilde{F}(s)\right){\rm d}s,~(i=1,2).\nonumber
		\end{align}
%
	\end{proposition}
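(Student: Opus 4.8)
The plan is to solve the mild (Duhamel) formulation of (\ref{Eq-Normal}) by a contraction argument in weighted spaces, producing first the fast-decaying branch from (A1)--(A2) and then, using the unstable eigenmode of (A3), a second slow-decaying branch sitting on the unstable manifold. Throughout write $\mathcal N(U):=H\,\mathbf{B}(U,U)+\widetilde{\mathbf{B}}(U,\widetilde U)+\widetilde F$, and for $a>0$ introduce
\[
X_a:=\Big\{U\in C\big((-\infty,T_0);\mathcal V'\big):\ \|U\|_{X_a}:=\sup_{\tau<T_0}\big\|e^{-a\tau}U(\tau)\big\|_{\mathcal V'}<\infty\Big\}.
\]
First I would record a single smoothing bound for the semigroup: combining the two estimates in (A1) (the second on $\tau\in(0,t_0)$; the first together with the semigroup property on $\tau\geq t_0$) yields $\|e^{\tau\mathbf L}\|_{\mathcal H\to\mathcal V'}\leq M\,\tau^{-\gamma_0}e^{a_0\tau}$ for all $\tau>0$. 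Since $\gamma_0\in(0,1)$, the kernel $r\mapsto\|e^{r\mathbf L}\|_{\mathcal H\to\mathcal V'}e^{-br}$ is integrable on $(0,\infty)$ for every $b>a_0$; this is the only analytic property of $e^{\tau\mathbf L}$ used below.

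For the first assertion I would run a fixed point for $\mathcal T(U)(\tau):=\int_{-\infty}^\tau e^{(\tau-s)\mathbf L}\mathcal N(U(s))\,{\rm d}s$ in a ball of radius $R$ in $X_{a_1}$. Using $\|\mathbf B(U,V)\|_{\mathcal H}\leq C\|U\|_{\mathcal V}\|V\|_{\mathcal V}$, the embeddings $\mathcal V'\hookrightarrow\mathcal V\hookrightarrow\mathcal H$, the bound $|H|\leq C_0$ for $\tau<T_0$, and the growth bounds (A2) on $\widetilde U,\widetilde F$, the substitution $r=\tau-s$ and the integrability above give
\[
\|\mathcal T(U)\|_{X_{a_1}}\lesssim R^2e^{a_1T_0}+Re^{\varepsilon T_0}+e^{\varepsilon T_0},\qquad
\|\mathcal T(U)-\mathcal T(V)\|_{X_{a_1}}\lesssim\big(Re^{a_1T_0}+e^{\varepsilon T_0}\big)\|U-V\|_{X_{a_1}}.
\]
Here convergence of the $r$-integrals uses $2a_1>a_0$ and $a_1+\varepsilon>a_0$, both guaranteed by $a_1>a_0>0$. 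Choosing $T_0$ sufficiently negative, and $R$ comparable to $e^{\varepsilon T_0}$, makes $\mathcal T$ a self-map and a contraction; the Banach fixed point theorem furnishes $U_1\in X_{a_1}$. Continuity in $\mathcal V'$ and the limit $\lim_{\tau\to-\infty}e^{c\tau}\|U_1\|_{\mathcal H}=0$ then follow from the representation, strong continuity of $e^{\tau\mathbf L}$, and $\mathcal V'\hookrightarrow\mathcal H$ with $a_1>0$.

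For the second assertion I would peel off the unstable mode provided by (A3). Set $\Phi(\tau):=\mathrm{Re}\big(e^{z_0\tau}\eta_0\big)$, a real solution of $\partial_\tau\Phi=\mathbf L\Phi$ with $\|\Phi(\tau)\|_{\mathcal V'}\lesssim e^{a_0\tau}$ and $\limsup_{\tau\to-\infty}e^{-a_0\tau}\|\Phi(\tau)\|_{\mathcal V'}>0$ (as $\eta_0\neq0$), and seek $U_2=\Phi+W$. Since $e^{(\tau-\tau_1)\mathbf L}\Phi(\tau_1)=\Phi(\tau)$, the correction $W$ must satisfy $W(\tau)=\int_{-\infty}^\tau e^{(\tau-s)\mathbf L}\mathcal N(\Phi+W)(s)\,{\rm d}s$, i.e. the representation of the Proposition holds with the unstable mode $\Phi$ as leading term. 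The crucial point is that the $W$-independent source $H\mathbf B(\Phi,\Phi)+\widetilde{\mathbf B}(\Phi,\widetilde U)+\widetilde F$ decays at rates $2a_0$, $a_0+\varepsilon$ and $a_1+\varepsilon$, all strictly larger than $a_0$. Fixing any $\mu_0\in\big(a_0,\min\{2a_0,a_0+\varepsilon,a_1+\varepsilon\}\big)$, the same integrability estimate places this source in $X_{\mu_0}$ with norm tending to $0$ as $T_0\to-\infty$, while the $W$-dependent terms $H\mathbf B(\Phi,W)+H\mathbf B(W,\Phi)$, $H\mathbf B(W,W)$ and $\widetilde{\mathbf B}(W,\widetilde U)$ contribute Lipschitz factors $\lesssim e^{a_0T_0}$, $\lesssim e^{\mu_0T_0}\|W\|_{X_{\mu_0}}$ and $\lesssim e^{\varepsilon T_0}$, all small for $T_0$ negative. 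A contraction in a small ball of $X_{\mu_0}$ then yields $W$, and since $\mu_0>a_0$ the correction is strictly higher order, $\|W(\tau)\|_{\mathcal V'}=o(e^{a_0\tau})$. Consequently $\limsup_{\tau\to-\infty}e^{-a_0\tau}\|U_2(\tau)\|_{\mathcal V'}>0$, whereas $e^{-a_0\tau}\|U_1(\tau)\|_{\mathcal V'}\lesssim e^{(a_1-a_0)\tau}\to0$; therefore $U_1\neq U_2$, both lie in $C((-\infty,T_0);\mathcal V')$, and both satisfy the decay constraint.

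I expect the main obstacle to be exactly this second branch: one must make the improper integral converge at the critical weight $a_0$, where the semigroup grows at precisely the rate of the leading mode. This is made possible by the quadratic structure of $\mathbf B$ (producing the gap $2a_0>a_0$) together with the spectral gap $a_1>a_0$ and the margin $\varepsilon$ in (A2); keeping track of the three competing decay rates and choosing the intermediate exponent $\mu_0$ and the threshold $T_0$ consistently is the delicate bookkeeping. A secondary task is the explicit identification of $T_0$ with the stated infimum, which follows by inspecting precisely where the bounds $|H|\leq C_0$ and the (A2)-type estimates enter the self-map and contraction inequalities above.
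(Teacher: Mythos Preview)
Your proposal is correct and follows essentially the same approach as the paper: a contraction in the exponentially weighted space $X_{a_1}$ for the first (fast-decaying) solution, then peeling off the unstable mode $\Phi(\tau)=\mathrm{Re}(e^{z_0\tau}\eta_0)$ and running a second contraction at an intermediate rate $\mu_0\in(a_0,\min\{2a_0,a_0+\varepsilon\})$ for the correction. The paper organizes the contraction step into a separate lemma (Lemma~\ref{lemma-contract}) and takes $\mu_0=a_0+\tfrac{\varepsilon}{2}$, but the mechanism and the distinguishing of the two solutions via $\limsup_{\tau\to-\infty}e^{-a_0\tau}\|U_i\|_{\mathcal V'}$ are identical to yours.
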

	We are going to choose appropriate $\bar{U}\in C^\infty_c\big(\mathbb R^d_\xi\big)$ so that Theorem \ref{Thm-1.0} and \ref{Thm-1} can be derived from Proposition \ref{prop-2.1}. For this, we need another proposition given below. Proposition \ref{prop-2.1} is proved in subsection \ref{New-way} and the following Proposition \ref{prop-3.1} in next section.	
	\begin{proposition}\label{prop-3.1}
		Let $d=2,3$. Let $\mathcal H = L^2_{\rm div}$ and $\mathcal V = W^{1,p} \cap H^{\alpha'}$ with $p\in\left(2, \frac{2d}{2+d-2\alpha}\right)$ and $\alpha'\in (0,\alpha)$. Then for any $A>0$, there exists $a\in(0,A)$ and a corresponding $\bar{U}_a\in C^\infty_c\big(\mathbb R^d_\xi\big) \cap \mathcal H$ such that:
		\\{\bf (1)}
		$\mathbf{L}_{\bar{U}_a}$ has eigenvalue $z =a+ib$ for some $b \in \mathbb{R}$ ;
		\\{\bf (2)}
		$\mathbf{L}_{\bar{U}_a}$ generates a strongly continuous semigroup $e^{\tau\mathbf{L}_{\bar{U}_a}}$ on $\mathcal H$ with growth bound $ w_0 (\mathbf{L}_{\bar{U}_a}) =a$;
		\\{\bf (3)} $\displaystyle\sup_{\tau\in (0,t_0)}\left(\tau^{\gamma_0}\big\| e^{\tau\mathbf{L}_{\bar{U}_a}}\big\|_{\mathcal H \to\mathcal V}\right)< \infty$ for some $\gamma_0 \in (0,1)$ .
	\end{proposition}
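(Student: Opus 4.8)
The plan is to split $\mathbf{L}_{\bar U}=\mathbf{L}_{\mathrm s}+\mathbf{K}_{\bar U}$, where $\mathbf{L}_{\mathrm s}U:=\frac{\alpha-1}{\alpha}U+\frac1\alpha\xi\cdot\nabla_\xi U+\Lambda^\alpha_\xi U$ is the self-similar fractional Stokes operator and $\mathbf{K}_{\bar U}U:=-\mathbf{P}(\bar U\cdot\nabla_\xi U+U\cdot\nabla_\xi\bar U)$ the linearised advection. Running the similarity change of variables (\ref{tra-1})--(\ref{tra-2}) backwards identifies $\mathbf{L}_{\mathrm s}$ with the fractional heat flow: $e^{\tau\mathbf{L}_{\mathrm s}}W_0=e^{(1-1/\alpha)\tau}\big(e^{(e^\tau-1)\Lambda^\alpha}W_0\big)(e^{\tau/\alpha}\,\cdot\,)$. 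From this formula I read off, first, that the growth bound of $e^{\tau\mathbf{L}_{\mathrm s}}$ on $\mathcal H=L^2_{\rm div}$ equals $s_\ast:=\frac{2\alpha-2-d}{2\alpha}$, which is strictly negative exactly because $\alpha<1+\frac d2$; and second, that for $\tau\in(0,t_0)$ the bound {\bf (3)} for $\mathbf{L}_{\mathrm s}$ reduces, after the bounded rescaling $\xi\mapsto e^{\tau/\alpha}\xi$, to the standard smoothing $\|e^{t\Lambda^\alpha}\|_{L^2\to W^{1,p}\cap H^{\alpha'}}\lesssim t^{-\gamma_0}$ with $\gamma_0:=\max\{\tfrac1\alpha+\tfrac d\alpha(\tfrac12-\tfrac1p),\,\tfrac{\alpha'}{\alpha}\}$. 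The hypotheses $\alpha'<\alpha$ and $p<\frac{2d}{2+d-2\alpha}$ are precisely what make $\gamma_0<1$.

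Because $\bar U\in C^\infty_c$, the operator $\mathbf{K}_{\bar U}$ is first order with smooth compactly supported coefficients, hence $\mathbf{L}_{\mathrm s}$-compact by Rellich (recall $\alpha>1$, so advection is strictly lower order than dissipation) and bounded on the domains in play. Two consequences follow. By stability of the essential growth bound under relatively compact perturbations, $w_{ess}(\mathbf{L}_{\bar U})=w_{ess}(\mathbf{L}_{\mathrm s})=s_\ast<0$, so the spectrum of $\mathbf{L}_{\bar U}$ in $\{\mathrm{Re}\,z>s_\ast\}$ is discrete and, by the spectral mapping theorem for the eventually norm-continuous semigroup $e^{\tau\mathbf{L}_{\bar U}}$, one has $w_0(\mathbf{L}_{\bar U})=\max\{s_\ast,\ \sup\mathrm{Re}\,\sigma_d(\mathbf{L}_{\bar U})\}$. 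Moreover the smoothing estimate {\bf (3)} passes from $\mathbf{L}_{\mathrm s}$ to $\mathbf{L}_{\bar U}$ through the Duhamel identity $e^{\tau\mathbf{L}_{\bar U}}=e^{\tau\mathbf{L}_{\mathrm s}}+\int_0^\tau e^{(\tau-s)\mathbf{L}_{\mathrm s}}\mathbf{K}_{\bar U}\,e^{s\mathbf{L}_{\bar U}}\,{\rm d}s$, iterated and summed; the time integral converges because $\gamma_0<1$. This already secures {\bf (3)} for every admissible $\bar U$, and reduces {\bf (2)} to placing the rightmost point of the spectrum at a prescribed $\mathrm{Re}=a$, which is exactly {\bf (1)}.

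The core is therefore to produce a discrete eigenvalue that crosses the imaginary axis with arbitrarily small positive real part. I take $\bar U=\bar U^{(c,L)}:=cL\,\bar W(\,\cdot\,/L)$ built from a fixed Vishik-type profile $\bar W\in C^\infty_c$ whose linearised Euler operator $\mathbf{L}_E(\bar W)U:=-\mathbf{P}(\bar W\cdot\nabla U+U\cdot\nabla\bar W)$ carries an unstable eigenvalue $\lambda_E$ with $\mathrm{Re}\,\lambda_E>0$ (Vishik's unstable vortex for $d=2$, and its planar analogue embedded as in Albritton--Bru\'e--Colombo for $d=3$). In the rescaled variable $\eta=\xi/L$ the operator becomes $\frac{\alpha-1}{\alpha}\mathrm{Id}+\frac1\alpha\eta\cdot\nabla_\eta+c\,\mathbf{L}_E(\bar W)+L^{-\alpha}\Lambda^\alpha$, so for $L$ large the dissipation is a small higher-order term and the reference operator is the inviscid self-similar operator $\mathbf{L}^{\rm inv}_c:=\frac{\alpha-1}{\alpha}\mathrm{Id}+\frac1\alpha\eta\cdot\nabla_\eta+c\,\mathbf{L}_E(\bar W)$. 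Its spectrum is the vertical line $\{\mathrm{Re}=s_\ast\}$ when $c=0$ (the dilation group) plus discrete eigenvalues off it; for $c$ large it contains an eigenvalue near $c\lambda_E$, hence with positive real part. As $c$ grows the rightmost discrete eigenvalue thus travels continuously from $\mathrm{Re}=s_\ast<0$ across $\mathrm{Re}=0$, and choosing $c$ just past the first crossing gives an eigenvalue $a+ib$ with $a\in(0,A)$ as small as desired.

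It remains to reinstate $L^{-\alpha}\Lambda^\alpha$ at a large but finite $L$ and to verify that the isolated unstable eigenvalue of $\mathbf{L}^{\rm inv}_c$ persists for $\mathbf{L}_{\bar U^{(c,L)}}$. Since this eigenvalue is isolated and lies strictly to the right of the essential spectrum (which, once dissipation is restored, sits near $\{\mathrm{Re}=s_\ast\}$), the associated finite-rank spectral projection and the eigenvalue vary continuously, and a final intermediate-value argument in $c$ (at fixed large $L$) delivers $\bar U_a\in C^\infty_c$ whose rightmost eigenvalue is exactly $a+ib$ with $\mathrm{Re}=a\in(0,A)$. This is {\bf (1)}; together with $w_{ess}=s_\ast<0<a$ and the identity from the second paragraph it gives $w_0(\mathbf{L}_{\bar U_a})=a$, i.e.\ {\bf (2)}, while elliptic regularity makes the eigenfunction smooth and decaying, hence an element of $\mathcal V$. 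I expect the genuine obstacle to be precisely this persistence step: the limit $L\to\infty$ is a vanishing-dissipation limit in which $L^{-\alpha}\Lambda^\alpha$, though of small coefficient, is of higher order than $\mathbf{L}^{\rm inv}_c$ and is not relatively bounded by it, so ordinary analytic perturbation theory does not apply off the shelf. Controlling it needs quantitative resolvent bounds near $\lambda_E$ showing that $L^{-\alpha}\Lambda^\alpha R(z,\mathbf{L}^{\rm inv}_c)$ is small in operator norm uniformly around the eigenvalue -- the technical heart of the matter -- together with the import of the Vishik-type construction of $\bar W$ and its truncation to $C^\infty_c$ while preserving $\mathrm{Re}\,\lambda_E>0$.
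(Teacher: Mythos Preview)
Your setup for {\bf (2)} and {\bf (3)} matches the paper's: the decomposition $\mathbf{L}_{\bar U}=\mathbf{P}_\alpha+\mathbf{L}'_{\bar U}$, smoothing of $e^{\tau\mathbf{P}_\alpha}$ via the explicit conjugation with the fractional heat flow, transfer to $e^{\tau\mathbf{L}_{\bar U}}$ by Duhamel iteration (your $\gamma_0<1$ is exactly the paper's condition), and the reduction of $w_0$ to the rightmost discrete eigenvalue via $w_{ess}(\mathbf{L}_{\bar U})\le s_\ast<0$. One caveat: your one-line appeal to Rellich for relative compactness is not clean on $\mathbb{R}^d$; the paper instead shows directly that $e^{(t-s)\mathbf{P}_\alpha}\mathbf{L}'_{\bar U}e^{s\mathbf{L}_{\bar U}}$ is compact, using that $\mathbf{L}'_{\bar U}$ lands in functions supported in a fixed ball and that $e^{t\mathbf{P}_\alpha}$ is compact from such functions to $L^2$ by heat-kernel decay at infinity plus local smoothing.

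The genuine gap is exactly the one you name in {\bf (1)}, and the paper does \emph{not} resolve it along your proposed route. Rather than a vanishing-dissipation limit $L\to\infty$, the paper writes $\mathbf{L}_{\beta^{-1}g}=\beta^{-1}(\mathbf{L}'_g+\beta\mathbf{P}_\alpha)$ and passes to vorticity: $\mathbb{L}'_g=\mathrm{Curl}\circ\mathbf{L}'_g\circ\mathrm{Curl}^{-1}$ decomposes as $\mathbb{K}+(\mathbb{S}+\mathbb{M})$ with $\mathbb{K}$ \emph{compact} on an invariant subspace ($m$-fold symmetric fields for $d=2$, axisymmetric pure-swirl for $d=3$) and $\langle(\mathbb{S}+\mathbb{M})U,U\rangle\le b\|U\|^2$ with $b$ strictly below the known unstable real part. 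The singular perturbation $\beta\mathbf{P}_\alpha$ is then handled by an abstract lemma (Proposition~\ref{prop2.2}): one proves only \emph{strong} resolvent convergence $R(z,\mathbb{S}+\mathbb{M}+\beta\mathbf{P}_\alpha)\to R(z,\mathbb{S}+\mathbb{M})$ for $\mathrm{Re}\,z>b$ via energy estimates and the Laplace representation, and the compactness of $\mathbb{K}$ upgrades this to norm convergence of $R(z,\cdot)\mathbb{K}$ and hence of the Riesz projection around the unstable eigenvalue. No bound on $\beta\mathbf{P}_\alpha R(z,\cdot)$ is ever needed; this is precisely the mechanism that sidesteps the obstacle you flag, and your proposed uniform control of $L^{-\alpha}\Lambda^\alpha R(z,\mathbf{L}^{\rm inv}_c)$ would be substantially harder to realise.

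To then place the real part in $(0,A)$, the paper switches to a \emph{regular} one-parameter family: with $g$ fixed, it shows $\beta\mapsto w_0(\mathbf{P}_\alpha+\beta\mathbf{L}'_g)$ is continuous on $\{\beta:w_0\ge0\}$ by proving $\|e^{t\mathbf{L}_\beta}-e^{t\mathbf{L}_{\beta_0}}\|\to0$ (now $\mathbf{L}'_g$ is the lower-order term, so Duhamel plus smoothing gives this for free), and applies the intermediate value theorem between $\beta=0$ where $w_0=s_\ast<0$ and the $\beta$ from the previous step where $w_0>0$. Your $(c,L)$ scheme conflates the singular and regular perturbation steps; separating them as the paper does is what makes the argument close.
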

	\begin{remark}\label{remark-eta-V}
		Let $\eta$ be a eigenvector of $\mathbf{L}_{\bar{U}_a}$ with eigenvalue $z'\in\mathbb{C}$,
		then it is not hard to show that $\eta \in\mathcal V$ and $e^{\mathbf{L}_{\bar{U}_a}}\eta = e^{z'} \eta$.
	\end{remark}

	\section{Proof of Prosotition \ref{prop-3.1}}\label{proof-of-prop-3.1}
	This section mainly discusses the eigenvalue problem of the operator
	$\mathbf{L}_{\bar{U}}  U := \frac{\alpha-1}{\alpha} U + \frac{1}{\alpha} \xi \cdot \nabla_\xi U -\mathbf{P} (\bar{U} \cdot \nabla U +U \cdot \nabla \bar{U}) + \Lambda^\alpha U$
	and the growth bound of the semigroup $e^{t\mathbf{L}_{\bar{U}}}$.
	
	Define operator
	\begin{equation}\label{Define-P}
		\begin{aligned}
			\mathbf{P}_\alpha U :=\frac{\alpha-1}{\alpha} U + \frac{1}{\alpha} \xi \cdot \nabla_\xi U + \Lambda^\alpha U,
		\end{aligned}		
	\end{equation}
	\begin{equation}\label{Define-L_U'}
		\mathbf{L}_{\bar{U}}' U := -\mathbf{P} (\bar{U} \cdot \nabla U +U \cdot \nabla \bar{U}).
	\end{equation}
	So we get that $\mathbf{L}_{\bar{U}} U = \mathbf{P}_\alpha U + \mathbf{P} \big(- \bar{U} \cdot \nabla U - U\cdot \nabla \bar{U}\big) = \mathbf{P}_\alpha U + \mathbf{L}_{\bar{U}}' U $.

	The main purpose of this section is to prove the Prosotition \ref{prop-3.1}.
	The proof of proposition \ref{prop-3.1} is divided to four parts:
	
	First, we need to estimate the semigroup $ e^{\mathbf{P}_\alpha t}, \ \alpha \in (\frac{1}{2}+\frac{d}{4},1+\frac{d}{2})$. $ \mathbf{P}_\alpha $ is derived from the fractional Laplacian, and naturally, we hope that it retains properties similar to those of the Laplacian. In particular, we are interested in the boundedness and regularity of the semigroup it generates. We will discuss this in subsection \ref{se2.1.1}.
	
	%
	In subsection \ref{2.1.4}, we further consider some properties of the operator $\mathbf{L}_{\bar{U}}$. In this subsection, we primarily prove Proposition \ref{prop-regularity of e^L} and \ref{prop-w_0-L_u}, which demonstrates that $ e^{\mathbf{L}_{\bar{U}}} $ exhibits regularity similar to that of $ e^{\mathbf{P}_\alpha} $, and use this to estimate $ w_{ess} (\mathbf{L}_{\bar{U}}) $. 
	By Proposition \ref{prop-w_0-L_u}, to find $\bar{U}$ satisfies conditions (1)(2) in Prpposition \ref{prop-3.1} for some $a>0$, we only need to structure $\bar{U}$ such that $\mathbf{L}_{\bar{U}}$ has a eigenvalue $z, \text{Re} (z) >0$.
	
	We will follow the approach used in \cite{Brue-Euler} and \cite{Brue-NS} to structure $\bar{U}$ such that $\mathbf{L}_{\bar{U}}$ has a eigenvalue $z, \text{Re} (z) >0$ in subsections \ref{2.1.2}. We unify the two-dimensional and three-dimensional cases and propose a generalized model which also encompasses the case of the fractional Laplacian.
	
	To prove Prosotition \ref{prop-3.1}, we still need to find $\bar{U}$ such that $w_0(\mathbf{L}_{\bar{U}}) \in (0,a)$ for all $a>0$. We will prove this in subsection \ref{2.1.3}. Actually we will prove $w_0(\mathbf{P}_\alpha+\beta\mathbf{L}_{\bar{U}})$ is continuous with respect to $\beta$ in a certain sense. We will elaborate on this in detail in Proposition \ref{prop-2.3}.
	\subsection{Properties of $e^{t \mathbf{P}_\alpha}$}\label{se2.1.1}
	In this subsection, we provide some basic properties of the operator $\mathbf{P}_\alpha$.
	
	First, we give the regularity of the hot semigroup $e^{t \Lambda^\alpha}$.
	\begin{lemma}\label{lemma-heatsemigroup}
		For any $T>0,p >2$, there exists constants $C_T>0$ such that $ \|e^{t \Lambda^\alpha} u_0\|_{W^{1,p}} \leq C_T t^{ \frac{-(2+d)p+2d}{2\alpha p} }  \|u_0\|_{L^2} $ and $ \|e^{t \Lambda^\alpha} u_0\|_{\dot{H}^{\alpha'}} \leq C_{T,\alpha'} t^{ - \frac{\alpha'}{\alpha} } \|u_0\|_{L^2} $ for any $\alpha' \in (0,\alpha)$ and $t \in (0,T)$.
		%
		%
		%
	\end{lemma}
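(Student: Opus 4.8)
The plan is to work entirely on the Fourier side, where $e^{t\Lambda^\alpha}$ is the Fourier multiplier with symbol $e^{-t|\xi|^\alpha}$, i.e. $\widehat{e^{t\Lambda^\alpha}u_0}(\xi)=e^{-t|\xi|^\alpha}\widehat{u_0}(\xi)$, and to reduce both estimates to a single elementary smoothing bound for the homogeneous Sobolev norms. First I would establish, for every $s>0$, the estimate $\|e^{t\Lambda^\alpha}u_0\|_{\dot H^s}\le C_s\,t^{-s/\alpha}\|u_0\|_{L^2}$. This follows from Plancherel together with the pointwise multiplier bound $\sup_{r\ge0} r^{2s}e^{-2tr^\alpha}=C_s\,t^{-2s/\alpha}$, obtained by maximizing the one–variable function $r\mapsto r^{2s}e^{-2tr^\alpha}$ (the maximizer is $r=(s/(t\alpha))^{1/\alpha}$). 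Taking $s=\alpha'$ gives the second claimed inequality immediately, with a constant independent of $T$.

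For the $W^{1,p}$ bound I would combine this smoothing with Sobolev embedding. Set $s:=d\big(\tfrac12-\tfrac1p\big)$; since $2<p<\infty$ we have $0<s<d/2$, so the Gagliardo--Nirenberg--Sobolev embeddings $\dot H^{s}\hookrightarrow L^p$ and $\dot H^{1+s}\hookrightarrow \dot W^{1,p}$ hold, the latter being justified by the boundedness of the Riesz transforms on $L^p$ for $1<p<\infty$ (so that $\|\nabla f\|_{L^p}\lesssim \||\nabla|f\|_{L^p}\lesssim \||\nabla|f\|_{\dot H^s}=\|f\|_{\dot H^{1+s}}$). Applying these to $f=e^{t\Lambda^\alpha}u_0$ and inserting the smoothing estimate of the previous step yields $\|e^{t\Lambda^\alpha}u_0\|_{L^p}\lesssim t^{-s/\alpha}\|u_0\|_{L^2}$ and $\|\nabla e^{t\Lambda^\alpha}u_0\|_{L^p}\lesssim t^{-(1+s)/\alpha}\|u_0\|_{L^2}$.

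It then remains to check the exponent and collect terms. A direct computation gives $-(1+s)/\alpha=\frac{-(2+d)p+2d}{2\alpha p}$, which is exactly the exponent in the statement, while the $L^p$ part carries the strictly larger (less singular) exponent $-s/\alpha$. Since $t\in(0,T)$, I would absorb the $L^p$ part by writing $t^{-s/\alpha}=t^{-(1+s)/\alpha}\,t^{1/\alpha}\le T^{1/\alpha}\,t^{-(1+s)/\alpha}$, which is precisely where the dependence of the constant on $T$ enters; adding the two contributions gives $\|e^{t\Lambda^\alpha}u_0\|_{W^{1,p}}\le C_T\,t^{(-(2+d)p+2d)/(2\alpha p)}\|u_0\|_{L^2}$.

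I expect no genuine obstacle here: the argument is a routine combination of the explicit symbol, a one–variable maximization, and standard Sobolev embeddings. The only points requiring care are the bookkeeping of the exponent arithmetic and verifying that the hypotheses of the embeddings are met in the stated range $d\in\{2,3\}$, $2<p<\infty$; the role of the finite horizon $T$ is merely to absorb the subdominant $L^p$ contribution into the constant.
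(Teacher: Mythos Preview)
Your argument is correct and follows essentially the same route as the paper: both proofs first establish the $\dot H^s$ smoothing $\|e^{t\Lambda^\alpha}u_0\|_{\dot H^s}\lesssim t^{-s/\alpha}\|u_0\|_{L^2}$ by maximizing the Fourier multiplier, and then upgrade to $W^{1,p}$ via an embedding. The only cosmetic difference is in the embedding step: the paper applies the Gagliardo--Nirenberg interpolation $\|f\|_{L^p}\lesssim\|\nabla^2 f\|_{L^2}^a\|f\|_{L^2}^{1-a}$ to $f=\nabla e^{t\Lambda^\alpha}u_0$ (thus using smoothing up to $\dot H^3$), whereas you use the critical Sobolev embedding $\dot H^{s}\hookrightarrow L^p$ with $s=d(\tfrac12-\tfrac1p)$ together with Riesz transform boundedness, requiring smoothing only up to $\dot H^{1+s}$. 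Both choices are standard and yield the identical exponent; your version is marginally more direct and avoids interpolation arithmetic, while the paper's version sidesteps any appeal to Riesz transforms.
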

	\begin{proof}[\textbf{proof}]
		%
		Define $f(x) := x e^{-t x^{ \frac{\alpha}{\beta}} } $ where $x \geq 0$, $\beta >0$. It can be check that $f(x)$ arrives its maximum value $t^{-\frac{\beta}{\alpha}} \big(\frac{\beta}{\alpha}\big)^{\frac{\beta}{\alpha}} e^{-\frac{\beta}{\alpha}} $ at $x = \big(\frac{\beta}{t \alpha}\big)^{\frac{\beta}{\alpha}} $.
		%
		
		Using the equation $\mathcal{F}  [e^{-t(-\Delta)^{\alpha/2}} u_0 ](\xi) = e^{ -t|\xi|^\alpha }  \mathcal{F} [u_0](\xi)$
		we can get
		\begin{equation}
			\begin{aligned}
				\|e^{-t(-\Delta)^{\alpha/2}} u_0\|_{\dot{H}^\beta} =& \| |\xi|^\beta e^{ -t|\xi|^\alpha }  \mathcal{F} [u_0] (\xi)\|_{L^2_\xi}
				\\ \leq&  \| |\xi|^\beta e^{ -t|\xi|^\alpha }\|_{L^\infty_\xi} \|\mathcal{F} [u_0] (\xi)\|_{L^2_\xi}
				\\ \leq& t^{-\frac{\beta}{\alpha}} \big(\frac{\beta}{\alpha}\big)^{\frac{\beta}{\alpha}} e^{-\frac{\beta}{\alpha}}  \|u_0\|_{L^2}.
			\end{aligned}
		\end{equation}
		In particularly $\|e^{t \Lambda^\alpha} u_0\|_{\dot{H}^{\frac{\alpha}{2}}} \lesssim t^{ - \frac{1}{2} } \|u_0\|_{L^2} $ when $\beta = \frac{\alpha}{2}$.
		%
		
		On the other hand, we know $ \|f\|_{L^p} \lesssim \|\nabla^2 f\|_{L^2}^a \|f\|_{L^2}^{1-a} $ from the Gagliardo-Nirenberg interpolation inequality for $p >2,a = \frac{(p-2)d}{4p}$.
		So we have
		\begin{equation}
			\begin{aligned}
				\|\nabla e^{-t(-\Delta)^{\alpha/2}} u_0 \|_{L^p} \lesssim& \| \nabla^3 e^{-t(-\Delta)^{\alpha/2}} u_0 \|_{L^2}^a \|\nabla e^{-t(-\Delta)^{\alpha/2}} u_0\|_{L^2}^{1-a}
				\\ \lesssim& t^{-\frac{3}{\alpha} a } t^{-\frac{1}{\alpha} (1-a) } \|u_0\|_{L^2}
				\\ \lesssim& t^{ -\frac{1}{\alpha} \frac{(2+d)p-2d}{2p} } \|u_0\|_{L^2}.
			\end{aligned}
		\end{equation}

		Similarly $\|e^{-t(-\Delta)^{\alpha/2}} u_0 \|_{L^p} \lesssim  t^{ \frac{-(p-2)d}{2\alpha p} } \|u_0\|_{L^2} \lesssim t^{ \frac{-(2+d)p+2d}{2\alpha p} } \|u_0\|_{L^2} $, so we get $\|e^{-t(-\Delta)^{\alpha/2}} u_0 \|_{W^{1,p}} \leq C_T t^{ \frac{-(2+d)p+2d}{2\alpha p} }  \|u_0\|_{L^2} $ when $t \in [0,T]$.
	\end{proof}
	
	Next we give estimates for the semigroup $e^{t\mathbf{P}_\alpha}$.
	\begin{proposition}\label{lemma-regularity-e^tP}
		$\mathbf{P}_\alpha$ 
		is generator of a semigroup $e^{\tau \mathbf{P}_\alpha}$ on $L^2_{\rm div}$ such that
		\\ (i) For all $\tau >0$:
		\begin{equation}
			\| e^{ \tau \mathbf{P}_\alpha } \|_{L^2\to L^2} \leq e^{-\frac{2+d-2\alpha}{2\alpha} \tau }.
		\end{equation}
		\\ (ii) For any $\tau_0 >0,\ \alpha' \in (0,\alpha)$, there exists $C_{\tau_0}$ such that
		\begin{equation}\label{rerularity-e^tP}
			\begin{aligned}
				&\|e^{\tau \mathbf{P}_\alpha} \|_{L^2 \to W^{1,p}}  \leq C_{\tau_0} \tau^{ \frac{-(2+d)p+2d}{2\alpha p} },
				\\ &\|e^{\tau \mathbf{P}_\alpha} \|_{L^2 \to H^{\alpha'}} \leq C_{\tau_0} \tau^{ -\frac{\alpha'}{\alpha} },
			\end{aligned}
			\ \ \tau \in (0,\tau_0).
		\end{equation}
	\end{proposition}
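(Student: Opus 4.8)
The plan is to realize $e^{\tau\mathbf{P}_\alpha}$ \emph{explicitly} as the fractional heat semigroup $e^{s\Lambda^\alpha}$ read through the similarity change of variables $(\xi,\tau)\mapsto(x,t)=(e^{\tau/\alpha}\xi,\,e^{\tau})$, and then to transfer the bounds of Lemma \ref{lemma-heatsemigroup} by tracking the Jacobian and homogeneity factors. Writing $(D_\lambda\phi)(\xi):=\phi(\lambda\xi)$ for the dilation, I would set
\[
S(\tau)\phi:=e^{\frac{\alpha-1}{\alpha}\tau}\,D_{e^{\tau/\alpha}}\,e^{(e^{\tau}-1)\Lambda^{\alpha}}\phi,\qquad \tau\ge0,
\]
which is precisely the solution operator of $\partial_\tau\mathring U=\mathbf{P}_\alpha\mathring U$ obtained by substituting the fractional Stokes flow into the self-similar variables. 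First I would verify that $S$ is a strongly continuous semigroup on $L^2_{\rm div}$ whose generator is $\mathbf{P}_\alpha$. The semigroup property $S(\tau_1)S(\tau_2)=S(\tau_1+\tau_2)$ follows from the scaling identity $\Lambda^{\alpha}D_\lambda=\lambda^{\alpha}D_\lambda\Lambda^{\alpha}$, hence $e^{s\Lambda^{\alpha}}D_\lambda=D_\lambda e^{s\lambda^{\alpha}\Lambda^{\alpha}}$, combined with the bookkeeping $(e^{\tau_1}-1)e^{\tau_2}+(e^{\tau_2}-1)=e^{\tau_1+\tau_2}-1$. Each of the three factors defining $S(\tau)$ preserves the divergence-free constraint (the dilation because $\operatorname{div}(D_\lambda U)=\lambda(\operatorname{div}U)(\lambda\cdot)$, and $\Lambda^{\alpha}$ because it commutes with $\operatorname{div}$), so $S(\tau)$ maps $L^2_{\rm div}$ to itself. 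Strong continuity at $\tau=0$ reduces to continuity of dilations and of $s\mapsto e^{s\Lambda^{\alpha}}$ on $L^2$, and differentiating $S(\tau)\phi$ at $\tau=0$ on the Schwartz class recovers $\tfrac{\alpha-1}{\alpha}\phi+\tfrac1\alpha\,\xi\cdot\nabla_\xi\phi+\Lambda^{\alpha}\phi=\mathbf{P}_\alpha\phi$, identifying the generator on a core.

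For (i), the change of variables $x=e^{\tau/\alpha}\xi$ gives
\[
\|S(\tau)\phi\|_{L^2}=e^{\frac{2\alpha-2-d}{2\alpha}\tau}\,\bigl\|e^{(e^{\tau}-1)\Lambda^{\alpha}}\phi\bigr\|_{L^2},
\]
and the $L^2$-contractivity of the fractional heat semigroup (immediate from Plancherel, since $|e^{-s|\zeta|^{\alpha}}|\le1$) yields $\|S(\tau)\phi\|_{L^2}\le e^{-\frac{2+d-2\alpha}{2\alpha}\tau}\|\phi\|_{L^2}$, which is exactly (i).

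For (ii), the same change of variables together with the homogeneity of $\nabla_\xi$ and of $\Lambda^{\alpha'}_\xi$ expresses $\|S(\tau)\phi\|_{W^{1,p}}$ and $\|S(\tau)\phi\|_{\dot H^{\alpha'}}$ as $e^{c\tau}$-multiples of $\|e^{(e^{\tau}-1)\Lambda^{\alpha}}\phi\|_{W^{1,p}}$ and $\|e^{(e^{\tau}-1)\Lambda^{\alpha}}\phi\|_{\dot H^{\alpha'}}$. Applying Lemma \ref{lemma-heatsemigroup} with $s=e^{\tau}-1$ produces the singular factors $(e^{\tau}-1)^{\frac{-(2+d)p+2d}{2\alpha p}}$ and $(e^{\tau}-1)^{-\alpha'/\alpha}$; since both exponents are negative (here $p>2>\tfrac{2d}{2+d}$ makes the first negative) and $\tau\le e^{\tau}-1\le e^{\tau_0}\tau$ for $\tau\in(0,\tau_0)$, these are dominated by $C_{\tau_0}\tau^{\frac{-(2+d)p+2d}{2\alpha p}}$ and $C_{\tau_0}\tau^{-\alpha'/\alpha}$. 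The bounded prefactors $e^{c\tau}\le e^{c\tau_0}$ are absorbed into $C_{\tau_0}$, the low-order $L^p$ part of the $W^{1,p}$ norm is handled in the same way (its heat-kernel power is less singular, hence dominated on $\tau\in(0,\tau_0)$), and the $L^2$ contribution needed to pass from $\dot H^{\alpha'}$ to $H^{\alpha'}$ is controlled by (i). This gives (ii).

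The genuine work, and the main obstacle, is the clean verification of the semigroup and generation properties through the dilation–heat commutation relation (including strong continuity and the identification of the generator on a core), together with the exact bookkeeping of the scaling exponents so that the transferred powers match the stated ones. The elementary comparison $e^{\tau}-1\asymp\tau$ near $\tau=0$ is precisely what converts the heat-kernel powers of $s$ into the claimed powers of $\tau$.
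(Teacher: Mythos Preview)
Your proof is correct and follows essentially the same route as the paper: both realize $e^{\tau\mathbf{P}_\alpha}$ as the fractional heat semigroup conjugated by the self-similar change of variables $(\xi,\tau)\leftrightarrow(x,t)=(t^{1/\alpha}\xi,\log t)$, then read off the $L^2$ bound from the contractivity of $e^{s\Lambda^\alpha}$ and the smoothing bounds from Lemma~\ref{lemma-heatsemigroup} applied at $s=e^\tau-1\asymp\tau$. Your presentation is in fact slightly more explicit about the semigroup law and the identification of the generator (which the paper takes for granted), but the substance of the argument and the scaling bookkeeping are the same.
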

	\begin{proof}[\textbf{proof}]
		Consider equation:
		\begin{equation}\label{eq-2.26}
			\left\{
			\begin{aligned}
				&\partial_\tau U(\tau,\xi) = \mathbf{P}_\alpha U(\tau,\xi),
				\\ &U(\tau,\xi)|_{\tau =0 } = U_0(\xi),
			\end{aligned}
			\right.
		\end{equation}
		where $U_0(\xi) \in L^2_{\rm div} (\mathbb{R}^d)$. Then $e^{\tau \mathbf{P}_\alpha} U_0$ is the unique solution to the equation (\ref{eq-2.26}).
		Let
		\begin{equation}\label{22}
			\xi = \frac{x}{ t^{\frac{1}{\alpha}} } \ , \ \tau = \text{log}\ t,\ u(x,t) = t^{\frac{1}{\alpha}-1}  U(\xi,\tau).
		\end{equation}
		So $u(x,t)$ satisfies the following equation
		\begin{equation}
			\left\{
			\begin{aligned}
				&\partial_t u(t,x) = \Lambda^\alpha u(t,x),
				\\ &u(t,x)|_{t=1} = U_0(x).
			\end{aligned}
			\right.
		\end{equation}
		
		Notice that $\|u(x,t)\|_{L^2_x} \leq \|U_0\|_{L^2_x}$, by (\ref{22}) and Lemma \ref{lemma-heatsemigroup}:
		\begin{equation}
			\begin{aligned}
				\|U(\tau,\xi)\|_{L^2_\xi} = t^{-\frac{2+d-2\alpha}{2\alpha} } \|u(x,t)\|_{L^2_x}
				\leq t^{-\frac{2+d-2\alpha}{2\alpha} } \|U_0\|_{L^2_x}
				= e^{-\frac{2+d-2\alpha}{2\alpha} \tau } \|U_0\|_{L^2_\xi}.
			\end{aligned}
		\end{equation}
		thus
		\begin{equation}
			\| e^{ \tau P } \|_{L^2_\xi \to L^2_\xi} \leq e^{-\frac{2+d-2\alpha}{2\alpha} \tau }.
		\end{equation}
		The first part of the lemma has been proved.
		
		The norm relationship can be obtained from coordinate transformation:
		\begin{equation}\label{26}
			\|u\|_{L^p_x} = \|U\|_{L^p_\xi} t^{\frac{p+d-\alpha p}{\alpha p}}, \ 
			\|\nabla_x u\|_{L^p_x} = \|\nabla_\xi U\|_{L^p_\xi} t^{\frac{d-\alpha p}{\alpha p}}, \ 
			\|u\|_{\dot{H}^{\alpha'}_x} = \|U\|_{\dot{H}^{\alpha'}_\xi} t^{\frac{d-2 \alpha  + 2 - 2\alpha'}{2\alpha}}.
		\end{equation}
		So we can get that:
		\begin{equation}
			\begin{aligned}
				&\|\nabla_\xi U (\tau)\|_{L^p_\xi} = t^{-\frac{d-\alpha p}{\alpha p}}\|\nabla_x u\|_{L^p_x}
				\lesssim t^{-\frac{d-\alpha p}{\alpha p}} (t-1)^{ \frac{-(2+d)p+2d}{2\alpha p} } \|u_0\|_{L^2}
				\\ =& e^{-\frac{d-\alpha p}{\alpha p}\tau} (e^\tau-1)^{ \frac{-(2+d)p+2d}{2\alpha p} } \|u_0\|_{L^2}
				\lesssim \tau^{ \frac{-(2+d)p+2d}{2\alpha p} }\|u_0\|_{L^2},
			\end{aligned}
		\end{equation}
		and
		\begin{equation}
			\begin{aligned}
				\|U\|_{\dot{H}^{\alpha'}_\xi} = t^{-\frac{d-2 \alpha  + 2 - 2\alpha'}{2\alpha}} \|u\|_{\dot{H}^{\alpha'}_x} \lesssim e^{-\frac{d-2 \alpha  + 2 - 2\alpha'}{2\alpha} \tau} (e^\tau-1)^{ -\frac{\alpha'}{\alpha} }\|u_0\|_{L^2}
				\lesssim t^{-\frac{\alpha'}{\alpha}}\|u_0\|_{L^2}.
			\end{aligned}
		\end{equation}
		Thus $\|e^{\tau \mathbf{P}_\alpha} \|_{L^2 \to W^{1,p}} \leq C_{\tau_0} \tau^{ \frac{-(2+d)p+2d}{2\alpha p} } $ and $\|e^{\tau \mathbf{P}_\alpha} \|_{L^2 \to H^{\alpha'}} \leq C_{\tau_0} \tau^{ -\frac{\alpha'}{\alpha} }$ always holds for $\alpha' \in (0,\alpha)$ and $\tau \in (0,\tau_0)$.
	\end{proof}
	\begin{remark}
		In case $d=3$ and $\alpha=2$, the first part of this lemma corresponds to the Lemma 2.1 in \cite{33}. Here, we extend the applicability to arbitrary fractional Laplacian, and the proof differs from that in \cite{33} as well.
	\end{remark}
	%
	\subsection{Properties of $e^{t \mathbf{L}_{\bar{U}}}$}\label{2.1.4}
	
	In this subsection we give the regularity of $e^{t \mathbf{L}_{\bar{U}}}$ and estimate $w_{ess} (\mathbf{L}_{\bar{U}} ) $.
	First we show that $e^{t \mathbf{L}_{\bar{U}}}$ is a semigroup on $H^N_{\rm div}$ for all $N \in \mathbb{N}$ briefly.
	\begin{proposition}\label{prop-exist-semigroup}
		Let $\bar{U} \in C^\infty_c \cap L^2_{\rm div}$.
		Then $\mathbf{L}_{\bar{U}}$ is a closed, densely defined operator on $ H^N_{\rm div} $ with domain $D(\mathbf{L}_{\bar{U}}) = \{ U\in H^N_{\rm div}: \mathbf{L}_{\bar{U}} U \in H^N_{\rm div} \}$,
		$e^{t \mathbf{L}_{\bar{U}}}$ is a semigroup on $H^N_{\rm div}$ for $N \in \mathbb{N}$, and $\| e^{t \mathbf{L}_{\bar{U}}} \|_{H^N \to H^N} \leq e^{C \|\bar{U}\|_{W^{N,\infty}} t}$ where $C$ is a constant only depend on $N,d$ and $\alpha$. 
	\end{proposition}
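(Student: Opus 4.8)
The plan is to exploit the splitting $\mathbf{L}_{\bar U} = \mathbf{P}_\alpha + \mathbf{L}_{\bar U}'$ recorded above, treating $\mathbf{P}_\alpha$ (whose semigroup on $L^2_{\rm div}$ is already controlled in Proposition \ref{lemma-regularity-e^tP}) as the principal part and the first-order operator $\mathbf{L}_{\bar U}'U = -\mathbf{P}(\bar U\cdot\nabla U + U\cdot\nabla\bar U)$ as a perturbation. The first thing I would record is the commutation relation $[\partial_j,\mathbf{P}_\alpha] = \tfrac1\alpha\partial_j$, which holds because $\partial_j$ commutes with $\Lambda^\alpha$ and with the constant factor while $\partial_j(\xi\cdot\nabla U) = \partial_j U + \xi\cdot\nabla(\partial_j U)$. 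Iterating gives $\partial^\beta\mathbf{P}_\alpha = (\mathbf{P}_\alpha + \tfrac{|\beta|}\alpha)\partial^\beta$, hence $\partial^\beta e^{\tau\mathbf{P}_\alpha} = e^{|\beta|\tau/\alpha}e^{\tau\mathbf{P}_\alpha}\partial^\beta$. Together with the $L^2$-bound of Proposition \ref{lemma-regularity-e^tP}(i) and the fact that $\mathbf{P}_\alpha$ preserves the divergence-free constraint (since $\nabla\cdot(\xi\cdot\nabla U) = \nabla\cdot U + \xi\cdot\nabla(\nabla\cdot U)$ and $\Lambda^\alpha$ commutes with $\nabla\cdot$), this yields that $e^{\tau\mathbf{P}_\alpha}$ is a $C_0$-semigroup on each $H^N_{\rm div}$ with $\|e^{\tau\mathbf{P}_\alpha}\|_{\dot H^k\to\dot H^k}\leq e^{(k/\alpha - \frac{2+d-2\alpha}{2\alpha})\tau}$.

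Next I would construct $e^{t\mathbf{L}_{\bar U}}$ by a Duhamel iteration around $e^{\tau\mathbf{P}_\alpha}$. The obstruction is that $\mathbf{L}_{\bar U}'$ loses one derivative, so it is only bounded $H^N\to H^{N-1}$, with $\|\mathbf{L}_{\bar U}'\|_{H^N\to H^{N-1}}\lesssim\|\bar U\|_{W^{N,\infty}}$ (the Leray projector being bounded on every $H^s$). This loss is repaired by the smoothing of the principal semigroup: since $\alpha>1$ on the admissible range, the estimates of Proposition \ref{lemma-regularity-e^tP} upgrade, via the commutation relation, to $\|e^{\tau\mathbf{P}_\alpha}\|_{H^{N-1}\to H^N}\lesssim\tau^{-1/\alpha}$ with $1/\alpha<1$ integrable near $\tau=0$. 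Thus the map $U(\cdot)\mapsto e^{t\mathbf{P}_\alpha}U_0 + \int_0^t e^{(t-s)\mathbf{P}_\alpha}\mathbf{L}_{\bar U}'U(s)\,\mathrm ds$ is a contraction on $C([0,T];H^N_{\rm div})$ for small $T$, producing a unique mild solution that extends to all times and defines bounded operators $S(t)$ on $H^N_{\rm div}$; the semigroup law follows from uniqueness. Because $\mathbf{L}_{\bar U}'$ is $\mathbf{P}_\alpha$-bounded with relative bound $0$ (order $1<\alpha$), the generator is $\mathbf{L}_{\bar U}$ with $D(\mathbf{L}_{\bar U}) = D(\mathbf{P}_\alpha) = \{U\in H^N_{\rm div}:\mathbf{L}_{\bar U}U\in H^N_{\rm div}\}$, and $\mathbf{L}_{\bar U}$ is closed.

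Finally, to pin down the explicit bound $\|e^{t\mathbf{L}_{\bar U}}\|_{H^N\to H^N}\leq e^{C\|\bar U\|_{W^{N,\infty}}t}$ I would run an energy estimate on $U(t)=e^{t\mathbf{L}_{\bar U}}U_0$. Differentiating $\|U\|_{H^N}^2 = \sum_{|\beta|\leq N}\|\partial^\beta U\|_{L^2}^2$ and using the commutation relation, the $\mathbf{P}_\alpha$-contribution equals $\sum_{|\beta|\leq N}\big[(\tfrac{|\beta|}\alpha-\tfrac{2+d-2\alpha}{2\alpha})\|\partial^\beta U\|_{L^2}^2 - \|\Lambda^{\alpha/2}\partial^\beta U\|_{L^2}^2\big]$; a frequency splitting absorbs the positive top-order term $\tfrac{N}\alpha\|\partial^\beta U\|^2$ into the dissipation $\|\Lambda^{\alpha/2}\partial^\beta U\|^2$ (the latter supplying the missing $\tfrac\alpha2$ derivatives at high frequency), leaving only a harmless $C(N,d,\alpha)\|U\|_{H^N}^2$. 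For the perturbation, since $\partial^\beta U$ is divergence-free I may drop $\mathbf{P}$ against $\partial^\beta U$ and use $\langle\bar U\cdot\nabla\partial^\beta U,\partial^\beta U\rangle = -\tfrac12\int(\nabla\cdot\bar U)|\partial^\beta U|^2 = 0$, so only the commutators $[\partial^\beta,\bar U\cdot\nabla]U$ and the terms coming from $U\cdot\nabla\bar U$ survive; Moser/Kato--Ponce estimates bound these by $C\|\bar U\|_{W^{N,\infty}}\|U\|_{H^N}^2$. Grönwall's inequality then gives the claimed exponential bound. The main obstacle throughout is the derivative loss in the transport term $\bar U\cdot\nabla U$; it is defeated twice, by the $\alpha>1$ smoothing of $e^{\tau\mathbf{P}_\alpha}$ in the construction step and by the divergence-free cancellation of its top-order part in the energy step.
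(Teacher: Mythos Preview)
Your proposal is correct but takes a longer route than the paper. The paper's proof is a one-step application of the Lumer--Phillips theorem: it computes $\langle \mathbf{L}_{\bar U}U,U\rangle_{\dot H^n}$ directly, uses the divergence-free cancellation $\langle \bar U\cdot\nabla(\nabla^n U),\nabla^n U\rangle=0$ at top order (exactly your observation), bounds the remaining commutator terms by $C\|\bar U\|_{W^{N,\infty}}\|U\|_{H^N}^2$, and concludes that $\mathbf{L}_{\bar U}-C\|\bar U\|_{W^{N,\infty}}$ is dissipative on $H^N_{\rm div}$, so Lumer--Phillips delivers both existence of the semigroup and the stated exponential bound simultaneously. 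Closedness is argued from $D(\mathbf{L}_{\bar U}^*)=D(\mathbf{L}_{\bar U})$.

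By contrast, you first \emph{construct} $e^{t\mathbf{L}_{\bar U}}$ via a Duhamel fixed point around $e^{t\mathbf{P}_\alpha}$, using the smoothing $\|e^{\tau\mathbf{P}_\alpha}\|_{H^{N-1}\to H^N}\lesssim\tau^{-1/\alpha}$ (with $1/\alpha<1$) to absorb the derivative loss in $\mathbf{L}_{\bar U}'$, and only afterwards run the energy estimate to extract the explicit growth bound. Your energy step is essentially the paper's whole proof; the Duhamel construction and the relative-boundedness argument for the generator are extra machinery that Lumer--Phillips makes unnecessary here. What your route buys is that the Duhamel representation and the $H^{N-1}\to H^N$ smoothing of $e^{t\mathbf{P}_\alpha}$ are exactly what the paper exploits \emph{later} (Proposition~\ref{prop-regularity of e^L} and the proof of Proposition~\ref{prop-2.3}), so you are effectively previewing those tools. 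The paper's route is shorter and avoids having to pin down $D(\mathbf{P}_\alpha)$ precisely, which is slightly delicate because of the unbounded-coefficient drift $\xi\cdot\nabla$.
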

	\begin{proof}[\textbf{proof}]
		Note that $\mathbf{L}_{\bar{U}}$ and $\mathbf{L}_{\bar{U}}^*$ are densely defined operators on $ H^N_{\rm div} $ with same domain $D(\mathbf{L}_{\bar{U}}^*) = D(\mathbf{L}_{\bar{U}})$,
		so $\mathbf{L}_{\bar{U}}$ is a closed operator.
		Since
		\begin{equation}
			\begin{aligned}
				\langle \mathbf{L}_{\bar{U}}U, U \rangle_{\dot{H}^n}
				=& \langle \mathbf{P}_\alpha U ,U \rangle_{\dot{H}^n}
				- \langle \mathbf{P} \big(\bar{U} \cdot \nabla U + U\cdot \nabla \bar{U}\big) , U \rangle_{\dot{H}^n}
				\\ \lesssim& \| U \|_{\dot{H}^n}^2
				+ \sum_{k=0}^{n-1} \left(\|D^{n-k}\bar{U}\|_{L^\infty} \|U\|_{\dot{H}^{k+1}} + \|D^{n-k+1}\bar{U}\|_{L^\infty} \|U\|_{\dot{H}^{k}}\right) \| U \|_{\dot{H}^n}
				\\ &- \langle \mathbf{P}\big(\bar{U} \cdot \nabla (\nabla^n U) \big) , (\nabla^n U) \rangle
				\\ \lesssim& \|\bar{U}\|_{W^{N,\infty}} \|U\|_{H^n}^2 + 0,
			\end{aligned}
		\end{equation}
		where $n\in \mathbb{N}$ and $U \in D(\mathbf{L}_{\bar{U}})$,
		we have $\langle \mathbf{L}_{\bar{U}}U, U \rangle_{{H}^N} \leq C\|\bar{U}\|_{W^{N,\infty}} \|U\|_{H^N}^2$ where $C$ is a constant depend on $N,d$ and $\alpha$, thus $\mathbf{L}_{\bar{U}} - C\|\bar{U}\|_{W^{N,\infty}}$ is a dissipative operator.
		By the Lumer-Phillips Theorem we know $\mathbf{L}_{\bar{U}}$ generates a semigroup on $H^N_{\rm div}$ and $\| e^{t \mathbf{L}_{\bar{U}}} \|_{H^N \to H^N} \leq e^{C \|\bar{U}\|_{W^{N,\infty}} t}$.
	\end{proof}
	Unless otherwise specified, we consider $e^{t \mathbf{L}_{\bar{U}}}$ as a semigroup on $L^2_{\rm div}$.
	
	The main results of this subsection are the following propositions:
	\begin{proposition}\label{prop-regularity of e^L}
%
		If $ p \in \left( 2, \frac{2d}{2 + d - 2\alpha} \right) $, $\alpha' \in (0,\alpha)$, then $\| e^{t\mathbf{L}_{\bar{U}}} \|_{L^2 \to W^{1,p} } \leq C_T t^{-\gamma_p}$, $\| e^{t\mathbf{L}_{\bar{U}}} \|_{L^2 \to H^{\alpha'} } \leq C_T t^{-\frac{\alpha'}{\alpha}}$ for $t \in (0,T)$, where $\gamma_p = \frac{(2+d)p - 2d}{2\alpha p} \in (0,1)$.
	\end{proposition}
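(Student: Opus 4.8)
The plan is to treat $\mathbf{L}_{\bar U}=\mathbf{P}_\alpha+\mathbf{L}_{\bar U}'$ as a perturbation of $\mathbf{P}_\alpha$ and to bootstrap from the smoothing estimates for $e^{\tau\mathbf{P}_\alpha}$ already obtained in Proposition \ref{lemma-regularity-e^tP}. The starting point is the variation-of-constants identity
\[
e^{t\mathbf{L}_{\bar U}}U_0=e^{t\mathbf{P}_\alpha}U_0+\int_0^t e^{(t-s)\mathbf{P}_\alpha}\,\mathbf{L}_{\bar U}'\,e^{s\mathbf{L}_{\bar U}}U_0\,\mathrm{d}s,
\]
which is legitimate since both $\mathbf{P}_\alpha$ and $\mathbf{L}_{\bar U}$ generate semigroups (Propositions \ref{lemma-regularity-e^tP} and \ref{prop-exist-semigroup}) and $\mathbf{L}_{\bar U}'U=-\mathbf{P}(\bar U\cdot\nabla U+U\cdot\nabla\bar U)$ is relatively bounded. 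Indeed, because $\bar U\in C_c^\infty$ and $\mathbf{P}$ is bounded on $L^2$, the elementary but crucial estimate $\|\mathbf{L}_{\bar U}'U\|_{L^2}\le C(\|\bar U\|_{L^\infty}+\|\nabla\bar U\|_{L^\infty})\|U\|_{H^1}$ holds; this is what lets the extra derivative produced by $\mathbf{L}_{\bar U}'$ be compensated by the smoothing of $e^{(t-s)\mathbf{P}_\alpha}$.

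The decisive step is the $L^2\to H^1$ bound. Since $\alpha>1$ I may invoke the $H^{\alpha'}$-smoothing of Proposition \ref{lemma-regularity-e^tP} at $\alpha'=1$, giving $\|e^{\tau\mathbf{P}_\alpha}g\|_{H^1}\lesssim\tau^{-1/\alpha}\|g\|_{L^2}$ with $1/\alpha<1$. Iterating the Duhamel identity yields the perturbation series $e^{t\mathbf{L}_{\bar U}}=\sum_{n\ge0}T_n(t)$, where $T_0(t)=e^{t\mathbf{P}_\alpha}$ and $T_{n+1}(t)=\int_0^t e^{(t-s)\mathbf{P}_\alpha}\mathbf{L}_{\bar U}'T_n(s)\,\mathrm{d}s$. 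Estimating $\|T_n(t)\|_{L^2\to H^1}$ inductively, each step contributes a factor $(t-s)^{-1/\alpha}$ from the smoothing and converts the $H^1$-bound of $T_n$ into an $L^2$-bound via the relative boundedness above; the Beta-type integral $\int_0^t(t-s)^{-1/\alpha}s^{\beta}\,\mathrm{d}s$ converges for $\beta>-1$ and raises the exponent by $1-1/\alpha>0$ at each stage. This gives $\|T_n(t)\|_{L^2\to H^1}\le K_n\,t^{-1/\alpha+n(1-1/\alpha)}$ with $K_n$ a product of shrinking Beta factors, so the series converges on $(0,T)$ and the most singular ($n=0$) term dictates $\|e^{t\mathbf{L}_{\bar U}}\|_{L^2\to H^1}\le C_T t^{-1/\alpha}$. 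Equivalently, writing $\Phi(t)=\|e^{t\mathbf{L}_{\bar U}}U_0\|_{H^1}$, the self-referential inequality $\Phi(t)\le C t^{-1/\alpha}\|U_0\|_{L^2}+C\int_0^t(t-s)^{-1/\alpha}\Phi(s)\,\mathrm{d}s$ produces the same rate by a weakly-singular Gronwall lemma.

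With the $L^2\to H^1$ bound in hand, a single application of Duhamel upgrades to the target norms. For $W^{1,p}$ I use $\|e^{(t-s)\mathbf{P}_\alpha}g\|_{W^{1,p}}\lesssim(t-s)^{-\gamma_p}\|g\|_{L^2}$ from Proposition \ref{lemma-regularity-e^tP}, noting that $\gamma_p\in(0,1)$ exactly under the hypotheses: $p>2$ forces $\gamma_p>0$, while $p<\frac{2d}{2+d-2\alpha}$ forces $\gamma_p<1$. Feeding $\|\mathbf{L}_{\bar U}'e^{s\mathbf{L}_{\bar U}}U_0\|_{L^2}\lesssim s^{-1/\alpha}\|U_0\|_{L^2}$ into the convolution gives $\int_0^t(t-s)^{-\gamma_p}s^{-1/\alpha}\,\mathrm{d}s\sim t^{\,1-\gamma_p-1/\alpha}$, which, since $1-1/\alpha\ge0$, is strictly less singular than $t^{-\gamma_p}$ and is therefore absorbed into the main term $\|e^{t\mathbf{P}_\alpha}U_0\|_{W^{1,p}}\lesssim t^{-\gamma_p}\|U_0\|_{L^2}$. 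For $H^{\alpha'}$ with $\alpha'\in(1,\alpha)$ the identical computation with kernel $(t-s)^{-\alpha'/\alpha}$ applies, while for $\alpha'\in(0,1]$ I simply interpolate between the $L^2\to L^2$ bound (available from Proposition \ref{prop-exist-semigroup}, which gives $\|e^{t\mathbf{L}_{\bar U}}\|_{L^2\to L^2}\le e^{Ct}$) and the $L^2\to H^1$ bound.

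I expect the main obstacle to be precisely the $L^2\to H^1$ estimate, where the Duhamel formula is self-referential and the naive bound does not close. The resolution hinges on the condition $\alpha>1$: it makes both the source singularity $t^{-1/\alpha}$ and the kernel singularity $(t-s)^{-1/\alpha}$ integrable, and more importantly it forces each Duhamel iteration to be strictly less singular (exponent gain $1-1/\alpha>0$), so the perturbation series converges and reproduces the sharp rate. Once this regularization is secured, the remaining $H^{\alpha'}$ and $W^{1,p}$ bounds reduce to one-step Duhamel estimates governed entirely by elementary Beta-function integrability, with the admissible exponent range of $p$ chosen so that $\gamma_p<1$.
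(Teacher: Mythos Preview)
Your approach is correct and shares the paper's core ingredients: the Duhamel identity for $e^{t\mathbf{L}_{\bar U}}$ in terms of $e^{t\mathbf{P}_\alpha}$, the relative bound $\|\mathbf{L}_{\bar U}'U\|_{L^2}\lesssim\|U\|_{H^1}$, and a weakly-singular Gronwall argument (the paper isolates this as Lemma~\ref{lemma2.6}). The structural difference is that the paper runs the self-referential inequality \emph{directly} in the target space, writing $f(t)=\|e^{t\mathbf{L}_{\bar U}}u_0\|_{\mathcal V}$ with $\mathcal V\in\{W^{1,p},H^{\alpha'}\}$ and using $\|\mathbf{L}_{\bar U}'\|_{\mathcal V\to L^2}<\infty$ to close $f(t)\le Kt^{-\gamma}+M\int_0^t(t-s)^{-\gamma}f(s)\,\mathrm{d}s$ in one shot (plus a density step to pass from $u_0\in\mathcal V$ to $u_0\in L^2$). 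You instead run the Gronwall only once, at the level of $H^1$, and then bootstrap to $W^{1,p}$ and $H^{\alpha'}$ by a single non-self-referential Duhamel step (or interpolation for $\alpha'\le1$). Your route is slightly longer but has the advantage that the relative bound $\mathbf{L}_{\bar U}':H^1\to L^2$ is unambiguous, whereas the paper's claimed bound $\mathbf{L}_{\bar U}':H^{\alpha'}\to L^2$ is delicate when $\alpha'<1$ since $\bar U\cdot\nabla U$ genuinely costs a full derivative; your interpolation step sidesteps this. One small omission: in your Gronwall formulation you should note, as the paper does, that $\Phi(t)=\|e^{t\mathbf{L}_{\bar U}}U_0\|_{H^1}$ is only finite (hence continuous) at $t=0$ for $U_0\in H^1$, and then pass to general $U_0\in L^2$ by density once the $t$-independent constant is secured; your alternative Dyson-series justification avoids this issue.
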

	\begin{proposition}\label{prop-w_0-L_u}
		$w_{ess }(\mathbf{L}_{\bar{U}}) \leq \frac{2\alpha-2-d}{2\alpha } <0$ for any $\bar{U} \in C^\infty_c(\mathbb{R})$.
		If $w_0(\mathbf{L}_{\bar{U}}) > \frac{2\alpha-2-d}{2\alpha }$, then there exists $ z \in \sigma_d(\mathbf{L}_{\bar{U}}) $ such that $ \text{Re}(z) = w_0(\mathbf{L}_{\bar{U}}) $.
		Especially, if $\mathbf{L}_{\bar{U}}$ has a eigenvalue $\lambda, {\rm Re}(\lambda) >0$, then there exists $ z \in \sigma_d(\mathbf{L}_{\bar{U}}) $ such that $ \text{Re}(z) = w_0(\mathbf{L}_{\bar{U}}) $.
	\end{proposition}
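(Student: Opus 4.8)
The plan is to treat $\mathbf{L}_{\bar U} = \mathbf{P}_\alpha + \mathbf{L}_{\bar U}'$ as a perturbation of $\mathbf{P}_\alpha$ and to show that, although $\mathbf{L}_{\bar U}'$ is unbounded, it acts \emph{compactly} at the level of semigroups, so that the essential growth bound of $\mathbf{L}_{\bar U}$ is inherited from $\mathbf{P}_\alpha$. Concretely, I would first establish that the difference $e^{t\mathbf{L}_{\bar U}} - e^{t\mathbf{P}_\alpha}$ is a compact operator on $L^2_{\rm div}$ for every $t>0$. Granting this, the essential spectral radius is unchanged, $r_{\rm ess}\big(e^{t\mathbf{L}_{\bar U}}\big) = r_{\rm ess}\big(e^{t\mathbf{P}_\alpha}\big)$, whence $w_{ess}(\mathbf{L}_{\bar U}) = w_{ess}(\mathbf{P}_\alpha) \le w_0(\mathbf{P}_\alpha) \le \tfrac{2\alpha-2-d}{2\alpha}$, the last inequality coming from the bound $\|e^{\tau\mathbf{P}_\alpha}\|_{L^2\to L^2}\le e^{-\frac{2+d-2\alpha}{2\alpha}\tau}$ of Proposition \ref{lemma-regularity-e^tP}. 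Since $\alpha<1+\tfrac d2$ forces $\tfrac{2\alpha-2-d}{2\alpha}<0$, this also yields the asserted negativity.

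The compactness claim is the main obstacle, and I would prove it through the variation-of-constants identity
\begin{equation}
 e^{t\mathbf{L}_{\bar U}} = e^{t\mathbf{P}_\alpha} + \int_0^t e^{(t-s)\mathbf{P}_\alpha}\,\mathbf{L}_{\bar U}'\,e^{s\mathbf{L}_{\bar U}}\,{\rm d}s .
\end{equation}
It suffices to show each integrand is compact and that the integral converges in operator norm. For fixed $s>0$ I would choose $\alpha'\in(1,\alpha)$ (possible since $\alpha>1$ in the admissible range) and use Proposition \ref{prop-regularity of e^L} to map $e^{s\mathbf{L}_{\bar U}}\colon L^2\to H^{\alpha'}$ boundedly. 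Because $\bar U\in C^\infty_c$, the expression $\mathbf{L}_{\bar U}'U=-\mathbf{P}(\bar U\cdot\nabla U + U\cdot\nabla\bar U)$ produces, before applying $\mathbf{P}$, a function supported in the fixed compact set $K={\rm supp}\,\bar U$ and lying in $H^{\alpha'-1}(K)$; as $\alpha'-1>0$, Rellich compactness gives $H^{\alpha'-1}(K)\hookrightarrow\hookrightarrow L^2$, and since $\mathbf{P}$ is bounded on $L^2$ the map $\mathbf{L}_{\bar U}'e^{s\mathbf{L}_{\bar U}}\colon L^2\to L^2$ is compact; composing with the bounded operator $e^{(t-s)\mathbf{P}_\alpha}$ keeps compactness. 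For norm convergence I would use $\|\mathbf{L}_{\bar U}'e^{s\mathbf{L}_{\bar U}}\|_{L^2\to L^2}\lesssim\|e^{s\mathbf{L}_{\bar U}}\|_{L^2\to H^{\alpha'}}\lesssim s^{-\alpha'/\alpha}$, integrable near $s=0$ since $\alpha'/\alpha<1$, while near $s=t$ the factor $\|e^{(t-s)\mathbf{P}_\alpha}\|$ stays bounded. Hence the integral is a norm-convergent Bochner integral of compact operators, and is therefore compact. The subtle point is that the nonlocality of $\mathbf{P}$ does not obstruct compactness, because the localization is applied \emph{before} $\mathbf{P}$ and $\mathbf{P}$ is then merely a bounded operator.

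For the second assertion I would invoke the general facts $r\big(e^{t\mathbf{L}_{\bar U}}\big)=e^{t\,w_0(\mathbf{L}_{\bar U})}$ and $r_{\rm ess}\big(e^{t\mathbf{L}_{\bar U}}\big)=e^{t\,w_{ess}(\mathbf{L}_{\bar U})}$ valid for any $C_0$-semigroup. If $w_0(\mathbf{L}_{\bar U})>\tfrac{2\alpha-2-d}{2\alpha}\ge w_{ess}(\mathbf{L}_{\bar U})$, then $r\big(e^{t\mathbf{L}_{\bar U}}\big)>r_{\rm ess}\big(e^{t\mathbf{L}_{\bar U}}\big)$, so $e^{t\mathbf{L}_{\bar U}}$ possesses a spectral value $\mu$ with $|\mu|=e^{t\,w_0(\mathbf{L}_{\bar U})}$ lying strictly outside the essential spectral radius; such $\mu$ is necessarily an isolated eigenvalue of finite algebraic multiplicity. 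The spectral mapping theorem for the part of the spectrum outside the essential spectral radius then yields $\mu=e^{tz}$ for some $z\in\sigma_d(\mathbf{L}_{\bar U})$, and $|\mu|=e^{t\,w_0}$ forces ${\rm Re}(z)=w_0(\mathbf{L}_{\bar U})$. The delicate step here is the precise invocation of these essential-spectrum and spectral-mapping results to guarantee that the boundary spectral value is genuinely a point of $\sigma_d$.

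Finally, the ``especially'' statement follows by reduction to the previous case: if $\lambda$ is an eigenvalue with ${\rm Re}(\lambda)>0$, then $e^{t\lambda}\in\sigma\big(e^{t\mathbf{L}_{\bar U}}\big)$ gives $w_0(\mathbf{L}_{\bar U})\ge{\rm Re}(\lambda)>0>\tfrac{2\alpha-2-d}{2\alpha}\ge w_{ess}(\mathbf{L}_{\bar U})$, so the hypothesis $w_0>w_{ess}$ of the second assertion is met and the conclusion is immediate.
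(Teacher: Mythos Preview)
Your argument is correct and reaches the same conclusion as the paper, but the mechanism for compactness is genuinely different. The paper also uses the Duhamel identity $e^{t\mathbf{L}_{\bar U}} = e^{t\mathbf{P}_\alpha} + \int_0^t e^{(t-s)\mathbf{P}_\alpha}\mathbf{L}_{\bar U}'e^{s\mathbf{L}_{\bar U}}\,{\rm d}s$, but instead of your Rellich step it places the compactness on the \emph{left} factor: it first observes that $\mathbf{L}_{\bar U}'e^{s\mathbf{L}_{\bar U}}$ maps $L^2_{\rm div}$ boundedly into $\mathcal{H}_K=\{f\in L^2_{\rm div}:\operatorname{supp}f\subset B_K\}$ (using $e^{s\mathbf{L}_{\bar U}}:L^2\to H^1$ and that $\bar U$ has compact support), and then proves that $e^{(t-s)\mathbf{P}_\alpha}:\mathcal{H}_K\to L^2$ is compact by combining the smoothing of the fractional heat semigroup with a heat-kernel tail estimate $\|e^{t\Lambda^\alpha}u_0\|_{L^2(B_R^c)}\to 0$ for $u_0\in\mathcal{H}_K$. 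Your route is more economical in that it avoids the tail-decay estimate entirely: you exploit $e^{s\mathbf{L}_{\bar U}}:L^2\to H^{\alpha'}$ for some $\alpha'\in(1,\alpha)$, localize via $\bar U\in C^\infty_c$ to land in $H^{\alpha'-1}$ with fixed compact support, and invoke Rellich before applying the bounded Leray projector. The paper's version, on the other hand, only needs $H^1$-smoothing (not $H^{\alpha'}$ with $\alpha'>1$) and would survive in regimes where $\alpha\le 1$; in the ranges actually used here both arguments go through. For the second assertion the paper packages your spectral-mapping reasoning into its Lemma~\ref{lemma2.3} (citing \cite{One-P}), so the content is the same.
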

	\begin{remark}
		We will choose $\bar{U}$ in subsection \ref{2.1.2} such that $w_0(\mathbf{L}_{\bar{U}}) > 0$.
	\end{remark}
	\begin{remark}\label{re-e^L-H^N}
		Actually by the same proof of Proposition \ref{prop-regularity of e^L} we have $\| e^{t\mathbf{L}_{\bar{U}}} \|_{H^N \to H^{N+\alpha'} } \leq C_T t^{-\frac{\alpha'}{\alpha}}$ for $t \in (0,T)$.
	\end{remark}
	
	We will prove two propositions respectively. First, we prove Proposition \ref{prop-regularity of e^L}.
	\begin{lemma}\label{lemma2.6}
		Let $f \in C([0,\infty))$, $\gamma \in (0,1)$ and $ A(t) := K t^{-\gamma}$.
		
		If for any $t > 0$:
		\begin{equation}\label{30}
			f(t) \leq A(t) + M \int_{0}^{t} A(t-s) f(s) {\rm d}s.
		\end{equation}	
		Then there exists $T>0,C_T > 0$ (not depend on $f$) such that for $t \in (0, T)$:
		\begin{equation}
			f(t) \leq C_T A(t).
		\end{equation}
	\end{lemma}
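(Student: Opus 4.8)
The plan is to treat the hypothesis (\ref{30}) as a linear Volterra inequality with a weakly singular kernel and to run the classical singular Gr\"onwall (Henry-type) iteration. I introduce the positive integral operator
\[
(\mathbf{T}g)(t) := M\int_0^t A(t-s)\,g(s)\,{\rm d}s = MK\int_0^t (t-s)^{-\gamma}g(s)\,{\rm d}s,
\]
so that (\ref{30}) becomes $f\le A+\mathbf{T}f$. Since $\mathbf{T}$ has a nonnegative kernel it is order preserving, hence from $f\le A+\mathbf{T}f$ one gets $\mathbf{T}f\le \mathbf{T}A+\mathbf{T}^2 f$, and by induction
\[
f(t)\le \sum_{k=0}^{n-1}(\mathbf{T}^k A)(t)+(\mathbf{T}^n f)(t),\qquad n\ge 1.
\]
The goal is then to identify the limiting series $\sum_{k\ge 0}\mathbf{T}^k A$ as the bound $C_T A$ and to show that the remainder $\mathbf{T}^n f$ disappears as $n\to\infty$.

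The key computation evaluates $\mathbf{T}^k A$ explicitly through the Euler Beta integral $\int_0^t(t-s)^{a-1}s^{b-1}\,{\rm d}s=\frac{\Gamma(a)\Gamma(b)}{\Gamma(a+b)}\,t^{a+b-1}$, which is legitimate for all the exponents occurring because $\gamma\in(0,1)$. An induction on $k$ shows that the $k$-fold self-convolution of $(\cdot)^{-\gamma}$ is a multiple of $t^{k(1-\gamma)-1}$, and one further convolution against $s^{-\gamma}$ yields
\[
(\mathbf{T}^k A)(t)=A(t)\,\frac{\Gamma(1-\gamma)^{\,k+1}}{\Gamma\!\big((k+1)(1-\gamma)\big)}\,\big(MK\,t^{1-\gamma}\big)^{k}.
\]
Summing over $k$ gives $\sum_{k\ge 0}(\mathbf{T}^k A)(t)=A(t)\,S(t)$ with $S(t):=\sum_{k\ge 0}\Gamma(1-\gamma)^{k+1}\,\Gamma\big((k+1)(1-\gamma)\big)^{-1}\,(MK\,t^{1-\gamma})^{k}$, a Mittag-Leffler-type power series. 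Because $\Gamma\big((k+1)(1-\gamma)\big)$ grows super-exponentially in $k$, $S$ is entire; having nonnegative coefficients it is nondecreasing in $t$, so $S(t)\le S(T)=:C_T<\infty$ on $(0,T)$, and this $C_T$ depends only on $K,M,\gamma,T$.

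It remains to kill the remainder. As $f\in C([0,\infty))$ it is bounded on the compact interval $[0,T]$, say $|f|\le B$ there; estimating $f$ by $B$ and carrying out the one remaining integral gives
\[
(\mathbf{T}^n f)(t)\le B\,\frac{\big(MK\,\Gamma(1-\gamma)\,t^{1-\gamma}\big)^{n}}{\Gamma\!\big(n(1-\gamma)+1\big)}\longrightarrow 0\quad(n\to\infty)
\]
for each fixed $t\in(0,T)$, the Gamma denominator dominating the geometric numerator. Letting $n\to\infty$ in the iterated inequality then produces $f(t)\le C_T A(t)$ on $(0,T)$. The genuinely delicate point, and the reason finitely many iterations do not suffice, is the demand that $C_T$ be independent of $f$: the entire dependence on the size of $f$ is confined to $\mathbf{T}^n f$, which must be sent to zero, whereas the limiting series defining $C_T$ involves only the datum $A$. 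The remaining work is routine Beta/Gamma bookkeeping.
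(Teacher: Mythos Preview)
Your argument is correct and is in fact the textbook Henry-type singular Gr\"onwall iteration. It is, however, genuinely different from the paper's proof. The paper iterates only finitely many times: it sets $A_0=A$, $A_{n+1}=M\int_0^{\cdot}A(\cdot-s)A_n(s)\,{\rm d}s$ (your $\mathbf{T}^nA$), stops at the first index $N=\lceil\gamma/(1-\gamma)\rceil$ for which $A_N$ is bounded near $0$, and then shows by a direct estimate that $g_f:=f-\sum_{i<N}A_i$ satisfies $g_f(t)\le C_N+\frac{KM}{1-\gamma}t^{1-\gamma}\sup_{[0,t]}g_f$; choosing $T$ so small that $\frac{KM}{1-\gamma}T^{1-\gamma}\le\tfrac12$ then bounds $\sup_{[0,T]}g_f$ by a constant independent of $f$. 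Your approach instead iterates to infinity, identifies the sum as a Mittag--Leffler--type entire series in $t^{1-\gamma}$, and kills the remainder $\mathbf{T}^nf$ using only the continuity (hence local boundedness) of $f$. What this buys you is a stronger conclusion: your $C_T$ is finite for \emph{every} $T>0$, whereas the paper's argument genuinely needs $T$ small. On the other hand, the paper's route is more elementary in that it avoids the Gamma--function asymptotics and the infinite series, replacing them by a single contraction step; since only a short-time estimate is needed in the application (Proposition~\ref{prop-regularity of e^L}), nothing is lost there.
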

	\begin{proof}[\textbf{proof}]
		By induction, we define $A_0(t) := A(t)$ and $A_{n+1}(t) := M \int_{0}^{t} A(t-s) A_n(s) \, {\rm d}s$. A straightforward calculation shows that
		\begin{equation}
			A_n(t) = C_n t^{-\gamma + n (1-\gamma)},
		\end{equation}
		where $C_n$ is a constant that depends only on $\gamma$, $n$, and $K$.
		
		Since $\gamma < 1$, there exists $N = \lceil \frac{\gamma}{1 - \gamma} \rceil \in \mathbb{N}$ such that $A_N(t)$ is bounded by $C_N$ on $[0, 1]$, where $\lceil \cdot \rceil$ denotes the ceiling function.
		Note that $A_n(t)$ is continuous and integrable on $(0, 1)$ with a singularity for any $n < N$.

		Let $g_f(t) := f(t) - \sum_{i=0}^{N-1} A_i(t)$. Clearly $ \displaystyle\max_{s \in [0, t]} g_f(s) <\infty$ for all $t>0$ since $f(t)$ is continuous.
		We now show that $\displaystyle\max_{s \in [0, T]} g_f(s)$ is bounded by some constant independent of $f$ when $T$ is small enough.
		By (\ref{30}) we have:
		\begin{equation}
			\begin{aligned}
				g_f(t) \leq A_N( t) + M \int_{0}^{t} A(t-s) g_f(s) {\rm d}s 
				\leq C_N+ \frac{KM}{1-\gamma}t^{1-\gamma}\sup_{s \in [0,t]} g_f(s).
			\end{aligned}			
		\end{equation}		
		Let $ T $ be sufficiently small such that $ T^{1-\gamma} \leq \frac{1-\gamma}{2KM} $, then
		\begin{equation}
			\displaystyle\max_{s \in [0, T]} g_f(s) \leq \frac{C_N}{ 1- \frac{KM}{1-\gamma} T^{1-\gamma}}=:C_T'.
		\end{equation}
%
%
		Together by $A_n(t) \leq \frac{C_n}{K}T^{n(1+\gamma)} A(t) $, we have $f(t) \leq \big(\sum_{n=1}^{N-1} \frac{C_N}{K} T^{n(1+\gamma)} + \frac{C_T'}{K}T^{\gamma}\big)A(t) $.
%
	\end{proof}
	

	\begin{proof}[\textbf{Proof of Proposition \ref{prop-regularity of e^L}}]
		Recall $\mathbf{L}_{\bar{U}}= \mathbf{P}_\alpha+  \mathbf{L}_{\bar{U}}' $, where $\mathbf{P}_\alpha$ and $\mathbf{L}_{\bar{U}}'$ are defined in (\ref{Define-P}) and (\ref{Define-L_U'}).
%
%
		Let the space $\mathcal{V}$ be $W^{1,p}, $$\gamma := \frac{(2+d)p-2d}{2\alpha p}$, $ p \in \left( 2, \frac{2d}{2 + d - 2\alpha} \right) $ or $\mathcal{V}$ be $H^{\alpha'},$$\gamma := \frac{\alpha'}{\alpha}$, $ \alpha' \in (0,\alpha)$. By Proposition \ref{lemma-regularity-e^tP} we have $\| e^{t \mathbf{P}_\alpha} \|_{L^2 \to \mathcal{V}} \leq C_T t^{-\gamma}$ for $t \in [0,T]$.
		
		Let $T(t)u_0$ be the solution of the following equation:
		\begin{equation}
			\left\{
			\begin{aligned}
				&\partial_t u = \mathbf{P}_\alpha u + \mathbf{L}_{\bar{U}}' u,
				\\ &u|_{t=0} = u_0.
			\end{aligned}
			\right.
		\end{equation}
		The well-posedness is guaranteed by $\bar{U} \in C^\infty_c$ and Proposition \ref{prop-exist-semigroup}.
		Notice that
		\begin{equation}
			\begin{aligned}
				T(t)u_0 = e^{t\mathbf{P}_\alpha}u_0 + \int_{0}^{t} e^{(t-s) \mathbf{P}_\alpha} \mathbf{L}_{\bar{U}}' T(s) u_0 {\rm d}s,
			\end{aligned}
		\end{equation}
		and $\|\mathbf{L}_{\bar{U}}'\|_{V \to L^2} \leq \|\bar{U}\|_{L^2} + \|\bar{U}\|_{W^{1,\infty}} =:  M$.
		Therefore,
		\begin{equation}
			\begin{aligned}
				\|T(t) u_0\|_{\mathcal{V}} 
				\leq& \| e^{t\mathbf{P}_\alpha} \|_{L^2\to \mathcal{V}} \|u_0\|_{L^2} + M \int_{0}^{t} \| e^{(t-s) \mathbf{P}_\alpha} \|_{L^2 \to \mathcal{V}}\|T(s)u_0\|_{\mathcal{V}} {\rm d}s.
			\end{aligned}
		\end{equation}
		
		First, consider the case when $ u_0 \in \mathcal{V} $. 
		Obviously $ f(t) := \|T(t)u_0\|_{\mathcal{V}} $ is continuous on $[0,\infty)$. 
		We may assume without loss of generality that $ \|u_0\|_{L^2} = 1 $, by Proposition \ref{lemma-regularity-e^tP} we have:
		\begin{equation}
			\begin{aligned}
				f(t) \leq& \| e^{t\mathbf{P}_\alpha} \|_{L^2\to \mathcal{V}} 
				+ M \int_{0}^{t} \| e^{(t-s) \mathbf{P}_\alpha} \|_{L^2\to \mathcal{V}} f(s) {\rm d}s 
				\\ \leq& Kt^{-\gamma} + M\int_{0}^{t} (t-s)^{-\gamma} f(s) {\rm d}s .
			\end{aligned}
		\end{equation}
		By applying Lemma \ref{lemma2.6} there exists $T,C_T > 0$ such that
		\begin{equation}\label{2.40}
			\|T(t)u_0\|_{\mathcal{V}} \leq C_T t^{-\gamma} \|u_0\|_{L^2}, \quad t \in [0,T].
		\end{equation}
		By a density argument, (\ref{2.40}) holds for all $ u_0 \in L^2_{\rm div} $.
%
%
%
	\end{proof}
	
	To prove Proposition \ref{prop-w_0-L_u}, we need the following lemma.
	\begin{lemma}\label{lemma2.3}
		Let $\mathbf{A}$ be the generator of a semigroup $e^{t\mathbf{A}}$. If $ w_{ess}(\mathbf{A}) < \text{Re}(\lambda) $ for some $ \lambda \in \sigma(\mathbf{A}) $, then $ \exists a \in \sigma_d(\mathbf{A}) $ such that $ w_0(\mathbf{A})=\text{Re}(a) $.
	\end{lemma}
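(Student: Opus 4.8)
The plan is to pass to the time-one operator $T:=e^{\mathbf A}$ and translate everything into the spectral geometry of the bounded operator $T$. From the definitions of the growth bound and the essential growth bound, together with submultiplicativity of $\|\cdot\|_{ess}$ and Fekete's subadditive lemma, one obtains the dictionary
\[
r(T)=e^{w_0(\mathbf A)},\qquad r_{ess}(T):=\lim_{n\to\infty}\|e^{n\mathbf A}\|_{ess}^{1/n}=e^{w_{ess}(\mathbf A)}.
\]
Thus $w_0(\mathbf A)$ and $w_{ess}(\mathbf A)$ are precisely the logarithms of the spectral radius and the essential spectral radius of $T$, and the problem becomes one of locating circles inside $\sigma(T)$.

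First I would note that the hypothesis already forces $w_{ess}(\mathbf A)<w_0(\mathbf A)$: indeed $w_{ess}(\mathbf A)<\operatorname{Re}\lambda\le S(\mathbf A)\le w_0(\mathbf A)$, the last inequality being the standard spectral-bound/growth-bound estimate. Equivalently, the spectral inclusion $e^{\sigma(\mathbf A)}\subseteq\sigma(T)$ sends $\lambda$ to a point $e^{\lambda}\in\sigma(T)$ of modulus $e^{\operatorname{Re}\lambda}>r_{ess}(T)$, so $r(T)>r_{ess}(T)$. Since $\sigma(T)$ is compact and nonempty, the spectral radius is attained: choose $\mu\in\sigma(T)$ with $|\mu|=r(T)$. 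Because $|\mu|>r_{ess}(T)$, Riesz--Schauder theory guarantees that the portion of $\sigma(T)$ lying outside the closed disc of radius $r_{ess}(T)$ consists of isolated eigenvalues of finite algebraic multiplicity (poles of $R(\cdot,T)$); hence $\mu\in\sigma(T)\setminus\sigma_{ess}(T)$ and $\mu$ is such a pole.

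It remains to transport $\mu$ back to $\mathbf A$. Using the spectral mapping theorem for the discrete part of the spectrum, the isolated finite-multiplicity eigenvalue $\mu$ of $T=e^{\mathbf A}$ lying strictly outside the essential spectral radius is of the form $\mu=e^{a}$ with $a\in\sigma(\mathbf A)$ again an isolated eigenvalue of finite algebraic multiplicity, and the representative may be chosen so that $\operatorname{Re}(a)=\log|\mu|=\log r(T)=w_0(\mathbf A)$. Then $\operatorname{Re}(a)=w_0(\mathbf A)>w_{ess}(\mathbf A)$ forces $a\in\sigma_d(\mathbf A)$, which is the assertion.

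The delicate point, and the one I expect to be the main obstacle, is exactly this last pull-back, since the full spectral mapping theorem $\sigma(e^{\mathbf A})\setminus\{0\}=e^{\sigma(\mathbf A)}$ fails for general strongly continuous semigroups. What rescues the argument is that the spectral mapping theorem does hold for the essential spectrum (this is what underlies $r_{ess}(T)=e^{w_{ess}(\mathbf A)}$) and for eigenvalues that are poles of finite algebraic multiplicity sitting strictly outside the essential spectral radius. To make this rigorous I would form the Riesz spectral projection $P$ of $T$ attached to the finite peripheral set $\{z\in\sigma(T):|z|=r(T)\}$: $P$ has finite rank, commutes with the semigroup, and reduces $\mathbf A$ to a bounded operator on the finite-dimensional space $\operatorname{Ran}P$, where the elementary matrix spectral mapping theorem applies and produces $a$. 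Alternatively one may simply cite the growth-bound identity $w_0(\mathbf A)=\max\{w_{ess}(\mathbf A),\sup\{\operatorname{Re}\,z:z\in\sigma(\mathbf A)\}\}$ with attainment of the supremum whenever it exceeds $w_{ess}(\mathbf A)$, e.g. from Engel--Nagel.
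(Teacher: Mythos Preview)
Your proposal is correct and is essentially a detailed unpacking of what the paper does by citation: the paper's proof simply invokes the identity $w_0(\mathbf A)=\max\{S(\mathbf A),w_{ess}(\mathbf A)\}$ and then refers to (the proof of) Theorem~2.11 in Engel--Nagel for the attainment of the supremum by a discrete eigenvalue. Your time-one-map argument via $r(T)=e^{w_0(\mathbf A)}$, $r_{ess}(T)=e^{w_{ess}(\mathbf A)}$, the Riesz projection onto the peripheral spectrum, and the pull-back to $\sigma_d(\mathbf A)$ is precisely the mechanism behind that cited result, and you even note at the end that one may alternatively just cite Engel--Nagel directly---which is exactly what the paper does.
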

	\begin{proof}[\textbf{proof}]
		%
		From \cite{One-P}, we know that $ w_0(\mathbf{A}) = \displaystyle\max\{ S(\mathbf{A}), w_{ess}(\mathbf{A}) \} $. Since $ w_{ess}(\mathbf{A}) < \text{Re}(a) \leq S(\mathbf{A}) $, it follows that $ w_0(\mathbf{A}) \neq w_{ess}(\mathbf{A}) $.
		Then existence of $a_A$ follows from the proof of Theorem 2.11 in \cite{One-P}.
		%
	\end{proof}
%
	\begin{proof}[\textbf{Proof of Proposition \ref{prop-w_0-L_u}}]
		For notational convenience, we denote ${\mathcal H} := L^2_{\rm div}, {\mathcal V}:= H^1_{\rm div}$, $K := \inf \{ r: {\rm supp}(\bar{U}) \in B_r \}$, and 
		\begin{equation*}
			\begin{aligned}
				&\mathcal{H}_K := \{ f \in {\mathcal H} \mid \text{supp}(f) \subset B_K \},
				\\ &\mathcal{H}_{R} := \{ f \in {\mathcal H} \mid \text{supp}(f) \subset B_R \}.
			\end{aligned}
		\end{equation*}
		%
		Since $e^{s \mathbf{L}_{\bar{U}}} \in \mathscr{B} (\mathcal{H};\mathcal{V})$ and $\mathbf{L}_{\bar{U}}' \in \mathscr{B} (\mathcal{V};\mathcal{H}_K)$ (note that ${\rm supp} (\mathbf{L}_{\bar{U}}'u) \subset B_K, \ \forall u \in \mathcal{V} $), we see $\mathbf{L}_{\bar{U}}' e^{s \mathbf{L}_{\bar{U}}} \in \mathscr{B} (\mathcal{H};\mathcal{H}_K)$.
		

		Denote by $ g $ the heat kernel of the fractional heat equation, i.e. $\widehat{g}{(t,\xi)} = e^{-|\xi|^\alpha t}$, $ e^{t \Lambda^\alpha} u_0(x) = \int_{\mathbb{R}^d} g(t,x -y) u_0(y) {\rm d} y $. Then $ \| g(t) \|_{L^\infty(B_R^c)} \to 0$ as $ R \to \infty $.
		Let $u_0 \in H_K $, then
		\begin{equation}
			\begin{aligned}
				\| e^{t \Lambda^\alpha} u_0 \|_{L^2(B_R^c)}^2 =& \left\| g(t) * u_0 \right\|_{L^2(B_R^c)}^2
				\\ =& \int_{B_R^c} \left(\int_{\mathbb{R}^d} g(t,x -y)1_{\{|x-y|>R-K\}} u_0(y) {\rm d} y \right)^2 {\rm d} x
				\\ \leq& \| g(t)\|_{L^\infty(B_{R-K}^c)}^2 \|u_0\|_{L^2}^2
				\stackrel{R \to \infty}{\longrightarrow}  0.
			\end{aligned}
		\end{equation}
		%
		This indicates $\| e^{t \mathbf{P}_\alpha} \|_{L^2 \to L^2(B_R^c)} \stackrel{R \to \infty}{\longrightarrow}  0$ for all $t>0$.
		Moreover, by Lemma \ref{lemma-heatsemigroup}, $ e^{t \mathbf{P}_\alpha} $ is a compact operator from $ \mathcal{H}_K $ to $ \mathcal{H}_R $ for any $ R $.
		
		Now we take a bounded sequence $\{u_n\} $ in $ \mathcal{H}_K$ with $\|u_n\|_{L^2} \leq C$.
%
		Then by
%
		\begin{equation}
			\begin{aligned}
				&\| e^{t \mathbf{P}_\alpha}  (u_n -u_m) \|_{L^2}
				\leq \| e^{t \mathbf{P}_\alpha}  (u_n -u_m) \|_{L^2(B_R)}
				+ \| e^{t \mathbf{P}_\alpha}  (u_n -u_m) \|_{L^2(B_R^c)}
				\\ \leq& \| e^{t \mathbf{P}_\alpha}  (u_n -u_m) \|_{L^2(B_R)}
				+ 2C \| e^{t \mathbf{P}_\alpha}\|_{L^2 \to L^2(B_R^c)},
%
			\end{aligned}
		\end{equation}
		we have
		\begin{equation}
			\lim_{n,m \to \infty} \| e^{t \mathbf{P}_\alpha}  (u_n -u_m) \|_{L^2} \leq 2C \| e^{t \mathbf{P}_\alpha}\|_{L^2 \to L^2(B_R^c)}, \ \forall R >0.
		\end{equation}
		By sending $R \to \infty$, we see that $\{e^{t \mathbf{P}_\alpha} u_n\}$ is Cauchy in $\mathcal{H}$.
		Consequently, $e^{(t-s) \mathbf{P}_\alpha}\in \mathscr{C}(\mathcal{H}_K , \mathcal{H})$, $\mathbf{L}_{\bar{U}}' e^{s \mathbf{L}_{\bar{U}}} \in \mathscr{B}(\mathcal{H} , \mathcal{H}_K)$
		and so that $e^{(t-s) \mathbf{P}_\alpha} \mathbf{L}_{\bar{U}}' e^{s \mathbf{L}_{\bar{U}}} \in \mathscr{C}(\mathcal{H}, \mathcal{H})$.
		According to \cite{One-P} Thm-C.7, it follows that $\int_{0}^{t} e^{(t-s) \mathbf{P}_\alpha} \mathbf{L}_{\bar{U}}' e^{s \mathbf{L}_{\bar{U}}} \, {\rm d}s \in \mathscr{C}(\mathcal{H},\mathcal{H}) $.
		%
		To this end, we remind that
		\begin{equation}
			\begin{aligned}
				e^{t \mathbf{L}_{\bar{U}}} = e^{t \mathbf{P}_\alpha} + \int_{0}^{t} e^{(t-s) \mathbf{P}_\alpha } \mathbf{L}_{\bar{U}}' e^{s \mathbf{L}_{\bar{U}}} {\rm d}s.
			\end{aligned}
		\end{equation}
		Then by the definition of $ \|\cdot \|_{ess}$ and Proposition \ref{lemma-regularity-e^tP}, it follows that $\| e^{t \mathbf{L}_{\bar{U}}}  \|_{ess} \leq \| e^{t \mathbf{P}_\alpha}\| \leq C e^{t \frac{2\alpha-2-d}{2\alpha }} $ for all $t>0$.
		%
		By Proposition 2.10 in \cite{One-P}, we can estimate:
		\begin{equation}
			w_{ess }(\mathbf{L}_{\bar{U}})= \lim_{t\to \infty} \frac{1}{t} \log \|e^{t \mathbf{L}_{\bar{U}}} \|_{ess} \leq \frac{2\alpha-2-d}{2\alpha } <0 .
		\end{equation}
		Now if $ \mathbf{L}_{\bar{U}} $ has an eigenvalue $ \lambda $ with positive real part, then 
		$ w_{\text{ess}}(\mathbf{L}_{\bar{U}}) < 0 \leq \rm{Re}(z) \leq S(\mathbf{L}_{\bar{U}}).$
		Applying Lemma \ref{lemma2.3}, it follows  immediately that there exists $ z \in \sigma_P (\mathbf{L}_{\bar{U}}) $ with $ \text{Re}(z) = w_0(\mathbf{L}_{\bar{U}}) $.		
	\end{proof}
	
	%
	%
	%
	%
	%

	\subsection{The existence of eigenvalues}	\label{2.1.2}
	We first prove Prosotition \ref{prop-3.1} for some $a>0$ in this subsection:
	
	\begin{proposition}\label{prop-3.1-a}
		Let $d=2,3$. Let $\mathcal H = L^2_{\rm div}$ and $\mathcal V = W^{1,p} \cap H^{\alpha'}$ with $p\in\left(2, \frac{2d}{2+d-2\alpha}\right)$ and $\alpha'\in (0,\alpha)$. Then there exists $a > 0$ and a corresponding $\bar{U}_a\in C^\infty_c\big(\mathbb R^d_\xi\big)$ such that:
		\\{\bf (1)}
		$\mathbf{L}_{\bar{U}_a}$ has eigenvalue $z =a+ib$ for some $b \in \mathbb{R}$ ;
		\\{\bf (2)}
		$\mathbf{L}_{\bar{U}_a}$ generates a strongly continuous semigroup $e^{\tau\mathbf{L}_{\bar{U}_a}}$ on $\mathcal H$ with growth bound $ w_0 (\mathbf{L}_{\bar{U}_a}) =a$;
		\\{\bf (3)} $\displaystyle\sup_{\tau\in (0,t_0)}\left(\tau^{\gamma_0}\big\| e^{\tau\mathbf{L}_{\bar{U}_a}}\big\|_{\mathcal H \to\mathcal V}\right)< \infty$ for some $\gamma_0 \in (0,1)$ .
	\end{proposition}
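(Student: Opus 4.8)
The three conclusions are not independent: once we produce a single $\bar U_a$ for which $\mathbf{L}_{\bar U_a}$ admits one eigenvalue $\lambda$ with $\mathrm{Re}(\lambda)>0$, the remaining assertions are immediate from the machinery already in place. Indeed, Proposition \ref{prop-w_0-L_u} gives $w_{ess}(\mathbf{L}_{\bar U_a})\le\frac{2\alpha-2-d}{2\alpha}<0$, whence $w_{ess}(\mathbf{L}_{\bar U_a})<0<\mathrm{Re}(\lambda)\le S(\mathbf{L}_{\bar U_a})$; the same proposition then yields a point $z\in\sigma_d(\mathbf{L}_{\bar U_a})$ with $\mathrm{Re}(z)=w_0(\mathbf{L}_{\bar U_a})=:a$, and $a\ge\mathrm{Re}(\lambda)>0$. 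Writing $z=a+ib$ gives (1), while the identity $w_0=\mathrm{Re}(z)=a$ is exactly (2) (strong continuity being furnished by Proposition \ref{prop-exist-semigroup}). Finally (3) is Proposition \ref{prop-regularity of e^L} with $\gamma_0:=\max\{\gamma_p,\alpha'/\alpha\}\in(0,1)$, since the $\mathcal V=W^{1,p}\cap H^{\alpha'}$ bound is the sum of the two estimates there. Thus the whole proposition reduces to constructing a compactly supported, divergence-free $\bar U_a$ whose linearization $\mathbf{L}_{\bar U_a}$ has an unstable eigenvalue.

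For this I would follow the destabilization strategy of Vishik and Albritton--Bru\'e--Colombo, packaged so that $d=2,3$ and the exponent $\alpha$ are treated uniformly. Split $\mathbf{L}_{\bar U}=\mathbf{P}_\alpha+\mathbf{L}_{\bar U}'$ as in (\ref{Define-P})--(\ref{Define-L_U'}), and introduce an amplitude by setting $\bar U=A\,\bar V$ for a fixed profile $\bar V$, so that $\mathbf{L}_{A\bar V}=\mathbf{P}_\alpha+A\,\mathbf{L}_{\bar V}'$. Seeking an eigenvalue of size $A$ through the rescaling $z=A\mu$, the eigenvalue equation becomes
\begin{equation}
	\Big(\tfrac1A\mathbf{P}_\alpha+\mathbf{L}_{\bar V}'\Big)\eta=\mu\,\eta,
\end{equation}
whose formal limit as $A\to\infty$ is the purely advective (linearized Euler) spectral problem $\mathbf{L}_{\bar V}'\eta=\mu\eta$; note that both the self-similar drift $\frac{\alpha-1}{\alpha}+\frac1\alpha\xi\cdot\nabla_\xi$ and the dissipation $\Lambda^\alpha$ sit inside $\mathbf{P}_\alpha$ and therefore acquire the vanishing factor $1/A$. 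The plan is then: (i) choose $\bar V$ so that $\mathbf{L}_{\bar V}'$ has an unstable eigenvalue $\mu_0$ with $\mathrm{Re}(\mu_0)>0$; and (ii) show this eigenvalue persists for the regularized operator $\tfrac1A\mathbf{P}_\alpha+\mathbf{L}_{\bar V}'$ once $A$ is large, so that $\mathbf{L}_{A\bar V}$ has an eigenvalue $z=A\mu_A$ with $\mathrm{Re}(z)=A\,\mathrm{Re}(\mu_A)>0$.

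For step (i) I would take $\bar V$ with enough symmetry that the problem decouples. In two dimensions a suitable radial vorticity profile reduces $\mathbf{L}_{\bar V}'$, after separating the angular Fourier mode $e^{ik\theta}$, to a family of one-dimensional radial operators, and one exhibits a profile together with a mode $k$ whose scalar problem has an eigenvalue in the open right half-plane; in three dimensions one lifts the same planar vortex to a swirl-free ($2\tfrac12$-dimensional) field so that the reduction and the unstable mode carry over verbatim. This is precisely where the promised ``generalized model'' enters: I would cast the reduced problem as a single $\alpha$- and $d$-dependent ODE/integral eigenvalue problem and locate $\mu_0$ there, for instance by a shooting argument or a winding-number/Evans-function count, or by continuation from an already-known unstable profile.

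The main obstacle is step (ii): the unstable eigenvalue must survive the \emph{singular} perturbation $\tfrac1A\mathbf{P}_\alpha$. Although the coefficient $1/A$ is small, the term $\tfrac1A\Lambda^\alpha$ is of higher differential order than $\mathbf{L}_{\bar V}'$, so relatively-bounded perturbation theory does not apply and this is genuinely an inviscid-limit problem at the spectral level; one must rule out that as $A\to\infty$ the eigenvalue migrates into the essential spectrum or crosses the imaginary axis. Here the uniform bound $w_{ess}(\mathbf{L}_{A\bar V})\le\frac{2\alpha-2-d}{2\alpha}$ is decisive: in the $\mu=z/A$ variable the rescaled essential spectrum lies in $\{\mathrm{Re}\,\mu\le \tfrac1A\frac{2\alpha-2-d}{2\alpha}\}\to\{\mathrm{Re}\,\mu\le0\}$, leaving a fixed gap around the unstable $\mu_0$. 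I would then work in the resolvent picture, using the compactness and regularity already recorded (Proposition \ref{prop-regularity of e^L} and Remark \ref{re-e^L-H^N}) to prove that the resolvent of $\tfrac1A\mathbf{P}_\alpha+\mathbf{L}_{\bar V}'$ converges, on a fixed contour enclosing $\mu_0$ and disjoint from the essential spectrum, to that of $\mathbf{L}_{\bar V}'$, and invoking a Rouch\'e/spectral-projection argument to transfer the isolated unstable eigenvalue for all large $A$. Fixing $A$ large enough, one sets $\bar U_a:=A\bar V\in C^\infty_c\cap L^2_{\rm div}$ and $a:=w_0(\mathbf{L}_{\bar U_a})>0$, which completes the construction.
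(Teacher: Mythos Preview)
Your reduction is exactly the paper's: once some $\bar U_a$ produces a single eigenvalue with positive real part, Propositions~\ref{prop-w_0-L_u}, \ref{prop-exist-semigroup} and \ref{prop-regularity of e^L} deliver (1)--(3). Your amplitude rescaling $\bar U=A\bar V$, $z=A\mu$, leading to $(\tfrac1A\mathbf P_\alpha+\mathbf L'_{\bar V})\eta=\mu\eta$, is also the paper's (with $\beta=1/A$, $g=\bar V$). So strategy and step (i) are on target; the gap is in step (ii).

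The paper does not try to prove resolvent convergence of $\tfrac1A\mathbf P_\alpha+\mathbf L'_{\bar V}$ toward $\mathbf L'_{\bar V}$ directly on $L^2_{\rm div}$. The estimates you invoke (Proposition~\ref{prop-regularity of e^L}, Remark~\ref{re-e^L-H^N}) concern the \emph{viscous} semigroup and are not uniform as $A\to\infty$; they do not by themselves control the singular limit. What makes the perturbation argument go through is a structural decomposition obtained only after conjugating by $\mathrm{Curl}$: in vorticity variables the Euler linearization splits as
\[
\mathbb L'_g=\underbrace{\mathbb K}_{\text{compact on a symmetry subspace}}+\underbrace{\mathbb S}_{\text{bounded, }\langle\mathbb S U,U\rangle\le b\|U\|^2,\ b<a_0}+\underbrace{\mathbb M}_{\text{skew}},
\]
with $\mathbb M=-g\cdot\nabla$. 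This is the content of the paper's abstract Proposition~\ref{prop2.2}: one first gets \emph{strong} resolvent convergence of $\mathbb S+\mathbb M+\beta\mathbf P_\alpha$ to $\mathbb S+\mathbb M$ from the dissipativity bound (uniform semigroup growth $e^{bt}$), then upgrades to \emph{norm} convergence of $R(\cdot)\mathbb K$ using compactness of $\mathbb K$, and only then runs the spectral-projection (Rouch\'e) argument you describe. The Curl passage is not cosmetic; in velocity form the transport piece $-\mathbf P(\bar V\cdot\nabla U)$ is neither skew nor dissipative with a useful constant, and no compact remainder is visible. A secondary point: in $d=3$ the relevant invariant subspace is axisymmetric \emph{pure-swirl} ($U=U^\theta e_\theta$), not swirl-free; the bound $\|\mathbb S\|<a_0$ from \cite{Brue-NS} is what supplies the dissipativity constant $b$ in that case.
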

	
	\noindent From Proposition \ref{prop-regularity of e^L} and Proposition \ref{prop-w_0-L_u}, it remains to construct $\bar{U}$ such that $\mathbf{L}_{\bar{U}} = \mathbf{P}_\alpha + \mathbf{L}_{\bar{U}}'$ has eigenvalues with positive real parts.  
	Based on \cite{Brue-Euler, Brue-NS}, we already know this is true for $\mathbf{L}_{\bar{U}}'$.  
	We should show that $\mathbf{P}_\alpha$, seen as a perturbation to operator $ \mathbf{L}_{\bar{U}}'$, makes no essential changes.
	Specifically, we need the following proposition to prove Proposition \ref{prop-3.1-a}.
	The following is a generalized version of Theorem 2.4.2 in \cite{Brue-Euler}, the proof of the argument is coming from Theorem 2.4.2 in \cite{Brue-Euler}.
	\begin{proposition}\label{prop2.2}
%
		Let $\mathcal{H}$ be a Hilbert space and $V$ a dense subset of $\mathcal{H}$. Let $\mathbf{K} \in \mathscr{C}(\mathcal{H}), \mathbf{S}$ and $ \mathbf{P} $ be generators of some semigroups on $\mathcal{H}$.
		If following conditions are satisfied:
		\\
		{\bf (1)} $\mathbf{L} := \mathbf{K}+ \mathbf{S}$ has eigenvalue with real part $a_0 >0$ and $w_0(\mathbf{L}) =a_0 $,
		\\
		{\bf (2)} $\langle  \mathbf{S} U ,U \rangle 
		\leq b \|U \|^2$ for all $U \in V$, with some $b < a_0$,
		\\
		{\bf (3)} $\displaystyle\sup_{t \in [0,T]} \|\mathbf{P} e^{t \mathbf{S}}U \|_{\mathcal{H}} < \infty$ for all $U \in V$ and $T < \infty$,
		\\
		then there exists $\beta_0 \in (0,1)$ such that $\mathbf{L}_{\beta_0} := \mathbf{L} +\beta_0 \mathbf{P}$ has a eigenvalue with real part $z_0' >0$.
%
	\end{proposition}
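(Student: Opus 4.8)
The plan is to treat $\beta\mathbf P$ as a perturbation of $\mathbf S$ that is admissible in the sense of Miyadera--Voigt thanks to condition \textbf{(3)}, and then to reduce the persistence of the unstable eigenvalue to the stability of the characteristic values of an analytic family of operators of the form ${\rm Id}-(\text{compact})$, to which a generalized argument principle applies.

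First I would fix the spectral picture of $\mathbf L$ at $\beta=0$. By condition \textbf{(2)} and the Lumer--Phillips theorem, $\|e^{t\mathbf S}\|\le e^{bt}$, so $w_0(\mathbf S)\le b$; since $\mathbf K$ is compact, the Duhamel identity $e^{t\mathbf L}-e^{t\mathbf S}=\int_0^t e^{(t-s)\mathbf S}\mathbf K e^{s\mathbf L}\,{\rm d}s$ shows $e^{t\mathbf L}-e^{t\mathbf S}$ is compact, whence $w_{ess}(\mathbf L)\le b<a_0$ exactly as in the proof of Proposition \ref{prop-w_0-L_u}. Combined with condition \textbf{(1)}, the eigenvalue $z_0$ with ${\rm Re}(z_0)=a_0=w_0(\mathbf L)$ is isolated of finite algebraic multiplicity, and I choose a circle $\Gamma\subset\{{\rm Re}(z)>0\}$ centered at $z_0$ of radius $r<a_0-b$ so small that the closed disk $\bar D$ it bounds meets $\sigma(\mathbf L)$ only at $z_0$.

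The heart of the matter, and the main obstacle, is to build $e^{t(\mathbf S+\beta\mathbf P)}$ and to show its resolvent converges despite $\mathbf P$ being unbounded. Reading condition \textbf{(3)} through the closed graph and uniform boundedness principles yields $\sup_{t\in[0,T]}\|\mathbf P e^{t\mathbf S}\|_{\mathcal H\to\mathcal H}=:C(T)<\infty$, so that $\int_0^{t_0}\|\beta\mathbf P e^{s\mathbf S}U\|\,{\rm d}s\le \beta t_0 C(t_0)\|U\|$ is a Miyadera condition with constant $<1$ once $\beta$ is small. Hence $\mathbf S+\beta\mathbf P$ generates a $C_0$-semigroup given by the Dyson--Phillips series $\sum_{n\ge0}S_n(t)$, with $S_0(t)=e^{t\mathbf S}$ and $S_{n+1}(t)=\int_0^t S_n(t-s)\beta\mathbf P e^{s\mathbf S}\,{\rm d}s$; each $S_n$ carries a factor $\beta^n$, so $\|e^{t(\mathbf S+\beta\mathbf P)}-e^{t\mathbf S}\|=O(\beta)$ uniformly on compact $t$-intervals and $w_0(\mathbf S+\beta\mathbf P)<a_0-r$ for $\beta$ small. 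Laplace transforming gives norm convergence $(z-\mathbf S-\beta\mathbf P)^{-1}\to(z-\mathbf S)^{-1}$ on a right half-plane, and as both families are analytic and locally bounded on the connected set $\{{\rm Re}(z)>w_0(\mathbf S+\beta\mathbf P)\}\supset\bar D$, a Vitali argument propagates this to uniform norm convergence on $\Gamma$.

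Finally I would run the generalized argument principle. For $z\in\rho(\mathbf S+\beta\mathbf P)$ the equation $\mathbf L_\beta\eta=z\eta$ is equivalent to $\eta=(z-\mathbf S-\beta\mathbf P)^{-1}\mathbf K\eta$, so that $z$ is an eigenvalue of $\mathbf L_\beta$ precisely when the compact operator $\mathbf T_\beta(z):=(z-\mathbf S-\beta\mathbf P)^{-1}\mathbf K$ has $1$ as an eigenvalue; in particular every point of $\sigma(\mathbf L_\beta)\cap\bar D$ is such an eigenvalue, since $\bar D\subset\{{\rm Re}(z)>w_0(\mathbf S+\beta\mathbf P)\}\subset\rho(\mathbf S+\beta\mathbf P)$ for small $\beta$. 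The family $A_\beta(z):={\rm Id}-\mathbf T_\beta(z)$ is analytic of the type ${\rm Id}-(\text{compact})$ near $\bar D$, and by the previous step $A_\beta\to A_0$ uniformly on $\Gamma$; since $A_0(z)$ is invertible on $\Gamma$, so is $A_\beta(z)$ for small $\beta$, and the Gohberg--Sigal theorem shows that the number of characteristic values of $A_\beta$ inside $D$ (an integer depending continuously on $\beta$) equals that of $A_0$, which is at least one because $A_0(z_0)$ is not invertible. Thus for some $\beta_0\in(0,1)$ there is $z\in D$ with $A_{\beta_0}(z)$ non-invertible, i.e. an eigenvalue of $\mathbf L_{\beta_0}=\mathbf L+\beta_0\mathbf P$ with ${\rm Re}(z)>0$, which is the desired conclusion. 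Here compactness of $\mathbf K$ enters to guarantee the Fredholm structure of $A_\beta$, while condition \textbf{(2)} keeps the window $(b,a_0)$ open so that the unstable eigenvalue survives strictly to the right of the essential spectrum.
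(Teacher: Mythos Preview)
Your overall architecture is close to the paper's, but there is a genuine gap at the point where you invoke ``closed graph and uniform boundedness'' to conclude $\sup_{t\in[0,T]}\|\mathbf P e^{t\mathbf S}\|_{\mathcal H\to\mathcal H}<\infty$ from condition \textbf{(3)}. Condition \textbf{(3)} asserts only that $\|\mathbf P e^{t\mathbf S}U\|$ is \emph{finite} for each $U$ in the dense set $V$; it does not say $U\mapsto\mathbf P e^{t\mathbf S}U$ is bounded. Uniform boundedness requires the family to consist of bounded operators to begin with, and closed graph requires the domain to be all of $\mathcal H$ --- neither is given. A concrete counterexample: take $e^{t\mathbf S}={\rm Id}$, let $\mathbf P$ be any unbounded generator (say translation on $L^2(\mathbb R)$), and $V\subset D(\mathbf P)$ dense; then \textbf{(3)} holds trivially while $\mathbf P e^{t\mathbf S}=\mathbf P$ is unbounded. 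Without the uniform bound you cannot verify the Miyadera condition with a constant independent of $U$, so the Dyson--Phillips construction and the claimed $O(\beta)$ norm estimate for $e^{t(\mathbf S+\beta\mathbf P)}-e^{t\mathbf S}$ both collapse, and with them the norm resolvent convergence and the Vitali step.

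The paper avoids this by aiming only for \emph{strong} resolvent convergence. For fixed $\eta\in V$ it compares the evolutions $U_\beta(t)=e^{t(\mathbf S+\beta\mathbf P)}\eta$ and $U_0(t)=e^{t\mathbf S}\eta$ via an energy/Gronwall estimate, obtaining $\|U_\beta(t)-U_0(t)\|\le \beta e^{bt}\int_0^t\|\mathbf P e^{s\mathbf S}\eta\|\,{\rm d}s$, which uses \textbf{(3)} only for that single $\eta$. Laplace transforming and then extending from $V$ to $\mathcal H$ by density (against the uniform bound $\|R(z,\mathbf S+\beta\mathbf P)\|\le({\rm Re}\,z-b)^{-1}$ coming from \textbf{(2)}) gives $R(z,\mathbf S+\beta\mathbf P)\to R(z,\mathbf S)$ strongly. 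Compactness of $\mathbf K$ then upgrades this to \emph{norm} convergence of $R(z,\mathbf S+\beta\mathbf P)\mathbf K$, which is exactly what is needed to make ${\rm Id}-R(z,\mathbf S+\beta\mathbf P)\mathbf K$ invertible on $\Gamma$ and to pass to the limit in the Riesz projection $\frac{1}{2\pi i}\int_\Gamma R(\lambda,\mathbf L_\beta)\eta_0\,d\lambda\to\eta_0\neq0$. Your Gohberg--Sigal step would also go through once you have norm convergence of $\mathbf T_\beta(z)=R(z,\mathbf S+\beta\mathbf P)\mathbf K$; the fix is to route the argument through $\mathbf K$ rather than trying to get norm convergence of the bare resolvent.
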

	The proof of Proposition \ref{prop2.2} is shown in Appendix \ref{appendix-1}.
	
	\begin{proof}[\textbf{Proof of Proposition \ref{prop-3.1-a}}]
%
%
%
%
%
		By writing
		\begin{equation}
			\begin{aligned}
				\mathbf{L}_{\bar{U}} U = \underbrace{- \mathbf{P} \big(\bar{U}\cdot \nabla U + U \cdot \nabla \bar{U} \big)}_{ =: \mathbf{L}_{\bar{U}}' U}+ \mathbf{P}_\alpha U
				= \frac{1}{\beta} \left( \underbrace{- \mathbf{P} \big(\beta\bar{U}\cdot \nabla U + U \cdot \nabla \beta\bar{U}\big) }_{= \mathbf{L}_{\beta\bar{U}}' U} +  \beta\mathbf{P}_\alpha U \right) ,
			\end{aligned}
		\end{equation}
		we see that if 
		\begin{equation*}
			\textbf{(H)}\ \text{there exists}\  g \in C^\infty_c \text{and}\ \beta >0\ \text{s.t.} \
			\mathbf{L}_g' + \beta \mathbf{P}_\alpha \ \text{has eigenvalue with positive real part,}
		\end{equation*}
		then with $\bar{U} =\beta^{-1} g$, the operator $\mathbf{L}_{\bar{U}}$ has eigenvalue with positive real part.
		
		However, Proposition \ref{prop2.2} does not apply directly, since it is not immediately to substract a compact operator from $\mathbf{L}_{g}'$ so that the remaining part satisfies the required conditions.
		In spirit of \cite{Brue-NS}, we consider similar transform, which preserves eigenvalue: 
		\begin{equation}
			\begin{aligned}
				{\rm Curl} \circ \mathbf{L}_{\beta^{-1}g} \circ {\rm Curl}^{-1} 
				= \frac{1}{\beta} \left( \underbrace{ {\rm {\rm Curl}}_d \circ \mathbf{L}_g' \circ {\rm Curl}^{-1} }_{=: \mathbb{L}_g'}
				+ \beta \underbrace{ {\rm Curl} \circ \mathbf{P}_\alpha \circ {\rm Curl}^{-1} }_{= \mathbf{P}_\alpha} \right),
			\end{aligned}
		\end{equation}
		where ${\rm Curl}_d$ is the curl operator on $\mathbb{R}^d$.
		We will use Proposition \ref{prop2.2} to get the eigenvalue of $\mathbb{L}_\beta :=\mathbb{L}_g'+ \beta\mathbf{P}_\alpha$.
		We decompose as:
		\begin{equation}
			\begin{aligned}
				\mathbb{L}_g' U=& {\rm {\rm Curl}}_d \circ \mathbf{L}_\beta^{-1}g' \circ {\rm Curl}^{-1} U
				\\ =& - {\rm {\rm Curl}}_d \left(g \cdot \nabla ({\rm Curl}^{-1}U) + ({\rm Curl}^{-1}U) \cdot \nabla g \right)
				\\ =& \underbrace{- ({\rm Curl}^{-1}U)\cdot \nabla {\rm Curl}g}_{=:\mathbb{K} U}
				\underbrace{ - \varepsilon_{i,j} \left(  \partial_i g \cdot \nabla ({\rm Curl}^{-1}U)_j - \partial_i ({\rm Curl}^{-1}U) \cdot \nabla g_j  \right) }_{=:\mathbb{S} U}
				\underbrace{- g \cdot \nabla U}_{=:\mathbb{M} U}
				 ,
			\end{aligned}
		\end{equation}
%
%
		where $\mathbb{M}$ is a skew-adjoin operator and $\mathbb{S}$ is a bounded operator on $L^2(\mathbb{R}^d)$. We will choose invariant subspace $H \subset L^2_{\rm div}$ of $\mathbb{K}, \ \mathbb{S} + \mathbb{M}$ and $\mathbf{P}_\alpha$, such that $\mathbb{K}$ is a compact operator on $H$. 
		
		We note that it is not hard to show 
		\begin{equation}
			\langle( \mathbb{S+M}) U,U\rangle_{H^N} \leq C(\|g\|_{H^{N+1}} + \|D^{N+1}g\|_{L^\infty}) \|U\|_{H^N},
		\end{equation}
		\begin{equation}
			\sup_{t\in [0,T]}\|\mathbf{P}_\alpha e^{t(\mathbb{S+M})} \eta\|_{L^2} \leq C e^{TC(\|g\|_{H^{4}} + \|D^{4}g\|_{L^\infty}) }  \|\eta\|_{H^3} <\infty ,
		\end{equation}
		and Condition (3) is satisfied.

		For $d=2$: We note that $\mathbb{S} =0 $, $\langle\mathbb{M} U,U\rangle =0$, and hence Condition (2) hold trivially; we choose $g$ as in Proposition 2.4.2 of \cite{Brue-Euler} and $H := L^2_m = \{f \in : L^2_{\rm div} : R_{\frac{2\pi}{m}} f=f\}$ where $R_{\theta}$ is counterclockwise rotation of angle $\theta$ around the origin, then $\mathbb{K}$ is a compact operator on $H$ and Condition (1) holds.
		
		For $d=3$: We choose $g$ as in Proposition 2.6 of \cite{Brue-NS}, so that $\mathbb{L}_g'$ has a eigenvalue $z_0 = a_0 + i b_0$ with $w_0(\mathbb{L}_g') =a_0 >0$ (i.e. Condition (1)). 
		Let $H:= L^2_{aps}$ denote the set of $L_2-$integrable "axisymmetric pure-swirl" vector fields $U = U^\theta(r,z)e^\theta$ in \cite{Brue-NS}, then $\mathbb{K}$ is a compact operator on $H$.
		As shown by Lemma 3.1 and Theorem 3.2 in \cite{Brue-NS}, we have $\|\mathbb{S}\|_{L^2_{aps} \to L^2} < S(\mathbb{L}_g') \leq w_0(\mathbb{L}_g') = a_0$.
		Condition (2) is satisfied.

		Using Proposition \ref{prop2.2}, it immediately follows that there exists $ \beta \in (0,1)$ such that $\mathbb{L}_\beta $ has a eigenvalue $ z_\beta = a_\beta + i b_\beta,\ a_\beta>0 $ with corresponding eigenfunction $ \eta_\beta $.
		Note that $\mathbf{P}_\alpha U =  U + \frac{1}{\alpha} \xi \cdot \nabla_\xi U + \Lambda^\alpha U$, so $e^{t \mathbf{P}_\alpha} \in \mathscr{B}(L^2_{\rm div} , H^N_{\rm div})$ and $e^{t \mathbb{L}_\beta} \in \mathscr{B}(L^2_{\rm div} , H^N_{\rm div})$ for all $t>0, \ N \in \mathbb{N}$. By $e^{t \mathbb{L}_\beta}  \eta_\beta = e^{z_\beta t} \eta_\beta $ we have $\eta_\beta \in H^N$, so ${\rm Curl}^{-1} \eta_\beta \in L^2_{\rm div}(\mathbb{R}^d)$.
		
		Since $ \mathbf{L}_{\beta^{-1}g} {\rm Curl}^{-1} \eta_\beta = \beta {\rm Curl}^{-1} (\mathbb{L}_\beta \eta_\beta) = \beta {\rm Curl}^{-1} a_\beta \eta_\beta $, it follows that $ {\rm Curl}^{-1} \eta_\beta $ is an eigenfunction of $ \mathbf{L}_{\beta^{-1}g} $ with positive real eigenvalue.
		By Proposition \ref{prop-regularity of e^L} we finish the proof of Proposition \ref{prop-3.1-a}.
%
%
	\end{proof}

	\subsection{Proof of Proposition \ref{prop-3.1}} \label{2.1.3}
	We fix the $g(\xi)$ as in Proposition \ref{prop-3.1-a} so that $\mathbf{L}_{g}$ has eigenvalue with positive real part.
	To prove Proposition \ref{prop-3.1}, we only need to prove the following proposition:
	\begin{proposition}\label{prop-2.3}
		Let $\mathcal{W}_0^+ := \{\beta \in \mathbb{R}:w_0(\mathbf{L}_{\beta g}) \geq 0\}$ where $ \mathbf{L}_{\beta g} = \mathbf{P}_\alpha + \beta \mathbf{L}_{g}' ,\beta\in \mathbb{R} $.
		We have that
		$\displaystyle\lim_{\beta \to \beta_0}w_0(\mathbf{L}_{\beta g}) = w_0(\mathbf{L}_{\beta_0 g})$ for any $\beta_0 \in \mathcal{W}_0^+$.
	\end{proposition}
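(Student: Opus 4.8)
The plan is to establish the two one‑sided estimates $\liminf_{\beta\to\beta_0}w_0(\mathbf{L}_{\beta g})\geq w_0(\mathbf{L}_{\beta_0 g})$ and $\limsup_{\beta\to\beta_0}w_0(\mathbf{L}_{\beta g})\leq w_0(\mathbf{L}_{\beta_0 g})$ separately. Throughout I write $w_*:=\frac{2\alpha-2-d}{2\alpha}<0$ and use $\mathbf{L}_{\beta g}=\mathbf{P}_\alpha+\beta\mathbf{L}_g'$. The starting observation is that the essential growth bound is controlled uniformly in $\beta$: the argument of Proposition \ref{prop-w_0-L_u} shows $\|e^{t\mathbf{L}_{\beta g}}\|_{ess}\leq\|e^{t\mathbf{P}_\alpha}\|\leq Ce^{w_* t}$ with constant independent of the drift, so $w_{ess}(\mathbf{L}_{\beta g})\leq w_*$ for every $\beta$. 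Since $\beta_0\in\mathcal W_0^+$ means $w_0(\mathbf{L}_{\beta_0 g})\geq 0>w_*$, the identity $w_0=\max\{S,w_{ess}\}$ forces $w_0(\mathbf{L}_{\beta_0 g})=S(\mathbf{L}_{\beta_0 g})=\mathrm{Re}(z_0)$ for some $z_0\in\sigma_d(\mathbf{L}_{\beta_0 g})$ (Proposition \ref{prop-w_0-L_u}); the same reduction applies to any $\beta$ with $w_0(\mathbf{L}_{\beta g})>w_*$. Thus continuity reduces entirely to the behaviour of the discrete spectrum in a half‑plane $\{\mathrm{Re}\geq c\}$ with $c\in(w_*,0)$ fixed.

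For the lower bound I would invoke analytic perturbation theory. Because $g\in C^\infty_c$, the operator $\mathbf{L}_g'$ is $\mathbf{P}_\alpha$‑relatively bounded with relative bound $0$: one has $\|\mathbf{L}_g'U\|_{L^2}\lesssim\|U\|_{H^1(B_K)}$, and local regularity of $\mathbf{P}_\alpha$ (which agrees to leading order with $\Lambda^\alpha$ on the bounded set where $g$ lives, $\alpha>1$) gives $\|U\|_{H^1(B_K)}\leq\epsilon\|\mathbf{P}_\alpha U\|_{L^2}+C_\epsilon\|U\|_{L^2}$ for every $\epsilon>0$. Hence $\beta\mapsto\mathbf{L}_{\beta g}$ is a holomorphic family of type (A), the isolated eigenvalue $z_0$ of finite algebraic multiplicity persists, and there are eigenvalues $z(\beta)\to z_0$ as $\beta\to\beta_0$. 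Consequently $w_0(\mathbf{L}_{\beta g})\geq S(\mathbf{L}_{\beta g})\geq\mathrm{Re}(z(\beta))\to\mathrm{Re}(z_0)=w_0(\mathbf{L}_{\beta_0 g})$, which is lower semicontinuity.

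For the upper bound I argue by contradiction: suppose $\beta_n\to\beta_0$ with $w_0(\mathbf{L}_{\beta_n g})\to\ell>w_0(\mathbf{L}_{\beta_0 g})$. For large $n$ we have $w_0(\mathbf{L}_{\beta_n g})>w_*$, so $w_0(\mathbf{L}_{\beta_n g})=\mathrm{Re}(z_n)$ for an eigenvalue $z_n=a_n+ib_n$ (eigenvector $\eta_n\in\mathcal V\subset H^1$, normalized) with $a_n\to\ell$. First I bound $a_n$ from above by testing the eigenrelation: using that $g$ is divergence free (so $\mathrm{Re}\langle g\cdot\nabla\eta_n,\eta_n\rangle=0$) together with $\mathrm{Re}\langle\mathbf{P}_\alpha U,U\rangle\leq w_*\|U\|^2$, one gets $a_n=\mathrm{Re}\langle\mathbf{L}_{\beta_n g}\eta_n,\eta_n\rangle\leq w_*+|\beta_n|\,\|\nabla g\|_{L^\infty}$, which stays bounded. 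The crucial second step is to bound the imaginary parts $b_n$. Since $a_n\to\ell>0>w_*\geq w_0(\mathbf{P}_\alpha)$, for large $n$ we have $z_n\in\rho(\mathbf{P}_\alpha)$ and the eigenrelation rewrites as $\eta_n=-\beta_n R(z_n,\mathbf{P}_\alpha)\mathbf{L}_g'\eta_n$, whence $1\leq|\beta_n|\,\|\mathbf{L}_g'R(z_n,\mathbf{P}_\alpha)\|_{L^2\to L^2}\lesssim|\beta_n|\,\|R(z_n,\mathbf{P}_\alpha)\|_{L^2\to H^1}$. If this resolvent norm tends to $0$ as $|b_n|\to\infty$ while $a_n$ stays bounded, then, $\beta_n$ being bounded, the $b_n$ must remain bounded. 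Granting this, a subsequence gives $z_n\to z_*$ with $\mathrm{Re}(z_*)=\ell>w_0(\mathbf{L}_{\beta_0 g})\geq S(\mathbf{L}_{\beta_0 g})$; but eigenvalues of a resolvent‑continuous family can accumulate only on the spectrum of the limit operator, so $z_*\in\sigma(\mathbf{L}_{\beta_0 g})$, contradicting $\mathrm{Re}(z_*)>S(\mathbf{L}_{\beta_0 g})$. This yields $\limsup_{\beta\to\beta_0}w_0(\mathbf{L}_{\beta g})\leq w_0(\mathbf{L}_{\beta_0 g})$.

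The hard part is the resolvent decay $\|R(\lambda,\mathbf{P}_\alpha)\|_{L^2\to H^1}\to0$ as $|\mathrm{Im}\,\lambda|\to\infty$ with $\mathrm{Re}\,\lambda$ in a compact subset of $(w_*,\infty)$. This is an analyticity‑type statement: writing $R(\lambda,\mathbf{P}_\alpha)=\int_0^\infty e^{-\lambda\tau}e^{\tau\mathbf{P}_\alpha}\,{\rm d}\tau$ and combining the smoothing bound $\|e^{\tau\mathbf{P}_\alpha}\|_{L^2\to H^1}\lesssim\tau^{-1/\alpha}$ for small $\tau$ (Proposition \ref{lemma-regularity-e^tP} with $\alpha'=1<\alpha$) with $\|\mathbf{P}_\alpha e^{\tau\mathbf{P}_\alpha}\|\lesssim\tau^{-1}$, one splits $\int_0^\delta+\int_\delta^\infty$, bounds the first piece by $\delta^{1-1/\alpha}$, and integrates the second by parts in $\tau$ to gain a factor $|\lambda|^{-1}$. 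The genuine subtlety is that the drift $\frac1\alpha\xi\cdot\nabla_\xi$ in $\mathbf{P}_\alpha$ has unbounded coefficients, so the analytic‑semigroup estimate $\|\mathbf{P}_\alpha e^{\tau\mathbf{P}_\alpha}\|\lesssim\tau^{-1}$ is not immediate; I expect to obtain it either from the explicit conjugation of $e^{\tau\mathbf{P}_\alpha}$ to the analytic fractional heat semigroup via the scaling $(\xi,\tau)=(xt^{-1/\alpha},\log t)$ already used in Proposition \ref{lemma-regularity-e^tP}, or, since only $\|\mathbf{L}_g'R(\lambda,\mathbf{P}_\alpha)\|$ enters, by localizing to the support of $g$, where the drift is a genuinely lower‑order perturbation of $\Lambda^\alpha$ and local parabolic analyticity applies.
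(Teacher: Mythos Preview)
Your lower–semicontinuity argument via Kato perturbation is fine in spirit, and the paper's argument for that direction is morally the same (it uses Riesz projections for the bounded operator $e^{\mathbf{L}_\beta}$ rather than for the generator, but the conclusion is identical).

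The genuine gap is in the upper bound, where everything rests on controlling the imaginary parts $b_n$. The estimate you need, $\|R(\lambda,\mathbf{P}_\alpha)\|_{L^2\to H^1}\to0$ along vertical lines, is essentially an analyticity statement for $e^{\tau\mathbf{P}_\alpha}$, and this is precisely what is in doubt: the drift $\tfrac{1}{\alpha}\xi\cdot\nabla_\xi$ generates the dilation group $f\mapsto f(e^{\tau/\alpha}\,\cdot\,)$, which is \emph{not} analytic on $L^2$, and your route (a) does not help because the conjugation to the fractional heat flow is exactly a composition with this non‑analytic dilation. Route (b), localizing to $\mathrm{supp}\,g$, is obstructed by the nonlocality of $\Lambda^\alpha$. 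Your integration‑by‑parts scheme requires $\|\mathbf{P}_\alpha e^{\tau\mathbf{P}_\alpha}\|\lesssim\tau^{-1}$, which \emph{is} the analytic‑semigroup bound, so the argument is circular. Without the decay nothing prevents $|b_n|\to\infty$, and the accumulation‑point contradiction collapses.

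The paper avoids this entirely by passing from the generators to the bounded operators $e^{\mathbf{L}_\beta}$. First, Duhamel's formula
\[
e^{t\mathbf{L}_\beta}-e^{t\mathbf{L}_{\beta_0}}=(\beta-\beta_0)\int_0^t e^{(t-s)\mathbf{L}_\beta}\,\mathbf{L}_g'\,e^{s\mathbf{L}_{\beta_0}}\,{\rm d}s,
\]
together with the smoothing bound $\|e^{s\mathbf{L}_{\beta_0}}\|_{L^2\to H^1}\lesssim s^{-1/\alpha}$ and the uniform estimate $\|e^{t\mathbf{L}_\beta}\|\leq e^{C|\beta|t}$, gives \emph{operator‑norm} convergence $e^{\mathbf{L}_\beta}\to e^{\mathbf{L}_{\beta_0}}$, hence norm‑resolvent convergence of these bounded operators. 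Second, one uses $w_0(\mathbf{L}_\beta)=\log r(e^{\mathbf{L}_\beta})$: an eigenvalue $z_n$ of $\mathbf{L}_{\beta_n g}$ with $\mathrm{Re}\,z_n=w_0(\mathbf{L}_{\beta_n g})$ produces a spectral point $e^{z_n}\in\sigma(e^{\mathbf{L}_{\beta_n}})$ with $|e^{z_n}|=e^{w_0(\mathbf{L}_{\beta_n g})}$, and $|e^{z_n}|\leq\|e^{\mathbf{L}_{\beta_n}}\|$ is automatically bounded uniformly in $n$ --- no control of $\mathrm{Im}\,z_n$ is ever needed. A convergent subsequence $e^{z_{n_k}}\to\zeta$ then lies in $\sigma(e^{\mathbf{L}_{\beta_0}})$ by a Riesz‑projection argument, forcing $\log|\zeta|\leq\log r(e^{\mathbf{L}_{\beta_0}})=w_0(\mathbf{L}_{\beta_0 g})$ and giving the contradiction. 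The moral: moving to the semigroup level turns the unbounded vertical strip of possible eigenvalues into a bounded annulus, which is what makes the compactness step free.
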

	%
	We need a classical semigroup proposition as following:
	\begin{proposition}\label{Classical}
		Let $A$ be the generator of a semigroup $e^{tA}$, then:
		\begin{equation}
			w_0 (A) = \displaystyle\inf_{t>0} \frac{1}{t} \log \|e^{tA}\| = \frac{1}{t_0} \log r(e^{t_0A}) \ \text{for any } t_0 >0.
		\end{equation}
	\end{proposition}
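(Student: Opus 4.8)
The plan is to reduce the whole statement to two classical facts: Fekete's subadditive lemma and Gelfand's spectral radius formula. Write $\phi(t):=\log\|e^{tA}\|$. The semigroup identity $e^{(s+t)A}=e^{sA}e^{tA}$ together with submultiplicativity of the operator norm gives subadditivity, $\phi(s+t)\leq\phi(s)+\phi(t)$. Since $e^{tA}$ is strongly continuous, the uniform boundedness principle yields $\sup_{t\in[0,\delta]}\|e^{tA}\|<\infty$ for some $\delta>0$, so $\phi$ is bounded above on every compact interval. These are exactly the hypotheses of Fekete's lemma, which I would invoke to obtain
\[
\lim_{t\to\infty}\frac{\phi(t)}{t}=\inf_{t>0}\frac{\phi(t)}{t}=:\omega_\infty.
\]

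Next I would identify $\omega_\infty$ with the growth bound $w_0(A)=\inf\{a\in\mathbb R:\exists M<\infty,\ \|e^{tA}\|\leq Me^{at}\}$. For one inequality, any admissible pair $(M,a)$ gives $\phi(t)\leq\log M+at$, whence $\phi(t)/t\to a$ and so $\omega_\infty\leq a$; taking the infimum over admissible $a$ yields $\omega_\infty\leq w_0(A)$. For the reverse, fix $a>\omega_\infty$; since $\phi(t)/t\to\omega_\infty<a$ there is $T$ with $\phi(t)\leq at$ for $t\geq T$, while on $[0,T]$ the bound $\|e^{tA}\|\leq M_T:=\sup_{s\in[0,T]}\|e^{sA}\|$ combined with $e^{-at}\leq e^{|a|T}$ gives $\|e^{tA}\|\leq M_T e^{|a|T}e^{at}$. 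Setting $M':=\max\{1,M_T e^{|a|T}\}$ produces $\|e^{tA}\|\leq M'e^{at}$ for all $t\geq0$, so $w_0(A)\leq a$; letting $a\downarrow\omega_\infty$ gives $w_0(A)\leq\omega_\infty$. This settles $w_0(A)=\inf_{t>0}t^{-1}\log\|e^{tA}\|$.

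Finally, fix $t_0>0$ and apply Gelfand's formula to the bounded operator $T:=e^{t_0A}$, using the semigroup law $T^n=e^{nt_0A}$:
\[
\log r(e^{t_0A})=\lim_{n\to\infty}\frac{1}{n}\log\|e^{nt_0A}\|=t_0\lim_{n\to\infty}\frac{\phi(nt_0)}{nt_0}.
\]
Since $\lim_{t\to\infty}\phi(t)/t=\omega_\infty$ already exists by the first step, the subsequence $t=nt_0$ converges to the same value, so $t_0^{-1}\log r(e^{t_0A})=\omega_\infty=w_0(A)$ for every $t_0>0$, which is the asserted identity.

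The argument has no genuine obstacle, being an assembly of standard semigroup facts; the only point requiring care is the local boundedness $\sup_{t\in[0,T]}\|e^{tA}\|<\infty$, which is what makes Fekete's lemma applicable and supplies the control on $[0,T]$ in the second step. This is precisely where strong continuity of the semigroup (as assumed in the definition of $w_0(A)$) enters essentially.
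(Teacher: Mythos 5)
Your proposal is correct: subadditivity of $\phi(t)=\log\|e^{tA}\|$ with local boundedness (via strong continuity and uniform boundedness) feeds Fekete's lemma to give $w_0(A)=\inf_{t>0}t^{-1}\phi(t)=\lim_{t\to\infty}t^{-1}\phi(t)$, and Gelfand's spectral radius formula applied to $e^{t_0A}$ along the subsequence $t=nt_0$ yields the last equality. The paper does not prove this proposition itself but simply cites Proposition 2.2 in Chapter IV of the semigroup reference, and your argument is exactly the classical proof behind that citation, so the approaches coincide (the only cosmetic point left implicit is the degenerate convention $\log 0=-\infty$ for nilpotent semigroups, where all three quantities are $-\infty$).
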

	\begin{proof}[\textbf{proof}]
		See Proposition 2.2 in \cite[Chapter IV]{One-P}.
	\end{proof}
	\begin{proof}[\textbf{Proof of Proposition \ref{prop-2.3}}]  
		We denote $ \mathbf{L}_{\beta} := \mathbf{P}_\alpha + \beta \mathbf{L}_{g}' $.
		Let $\beta_0 \in \mathcal{W}_0^+$.
		We first show that $\displaystyle\lim_{\beta \to \beta_0}\|e^{t \mathbf{L}_{\beta}} - e^{t \mathbf{L}_{\beta_0}} \| =0 $.
		
		By $\|e^{t \mathbf{L}_{\beta}} \| \leq e^{Ct  \|g\|_{W^{1,\infty}} |\beta|} $ in Proposition \ref{prop-exist-semigroup} and $\|e^{t \mathbf{L}_{\beta}} \|_{L^2 \to H^1} \lesssim e^{Ct  \|g\|_{W^{1,\infty}} |\beta|} t^{-\frac{1}{\alpha}}$ in Proposition \ref{prop-regularity of e^L}, we have
		%
		\begin{equation}
			\begin{aligned}
				\|e^{t \mathbf{L}_{\beta}} - e^{t \mathbf{L}_{\beta_0}} \|
				&= \left\| \int_{0}^{t} e^{(t-s) \mathbf{L}_{\beta}} (\mathbf{L}_{\beta}-\mathbf{L}_{\beta_0}) e^{s \mathbf{L}_{\beta_0}} {\rm d}s \right\|
				\\ &\leq \int_{0}^{t} \| e^{(t-s) \mathbf{L}_{\beta}} \| \  \|\mathbf{L}_{\beta}-\mathbf{L}_{\beta_0}\|_{H^1 \to L^2} \|e^{s \mathbf{L}_{\beta_0}} \|_{L^2 \to H^1} {\rm d}s
				\\ &\lesssim |\beta - \beta_0| \int_{0}^{t} e^{(t-s) \|g\|_{W^{1,\infty}} |\beta| } \|\mathbf{L}_g' \|_{H^1 \to L^2} e^{s \|g\|_{W^{1,\infty}} |\beta_0| } s^{-\frac{1}{\alpha}} {\rm d}s
				\\ &\stackrel{\beta \to \beta_0}{\longrightarrow} 0,
			\end{aligned}
		\end{equation}
		thus $\displaystyle\lim_{\beta \to \beta_0}\|e^{t \mathbf{L}_{\beta}} - e^{t \mathbf{L}_{\beta_0}} \| =0 $.
		
		Let $\lambda \notin \sigma(e^{\mathbf{L}_{\beta_0}})$. Since
		\begin{equation}
			\begin{aligned}
				\left\| (\lambda {\rm Id} - e^{\mathbf{L}_{\beta}})^{-1} \right\|
				&\leq \| (\lambda {\rm Id}- e^{\mathbf{L}_{\beta}})^{-1} - (\lambda {\rm Id} - e^{\mathbf{L}_{\beta_0}})^{-1} \|
				+  \| (\lambda {\rm Id} - e^{\mathbf{L}_{\beta_0}})^{-1} \|
				\\ &\leq \| (\lambda {\rm Id} - e^{\mathbf{L}_{\beta}})^{-1} \| \ 
				\|e^{\mathbf{L}_{\beta}} - e^{\mathbf{L}_{\beta_0}} \| \ 
				\| (\lambda {\rm Id} - e^{\mathbf{L}_{\beta_0}})^{-1} \|
				+  \| (\lambda {\rm Id} - e^{\mathbf{L}_{\beta_0}})^{-1} \|,
			\end{aligned}
		\end{equation}
		we have that when $\|e^{\mathbf{L}_{\beta}} - e^{\mathbf{L}_{\beta_0}} \| 
		\leq \frac{1}{2\| (\lambda {\rm Id} - e^{\mathbf{L}_{\beta_0}})^{-1} \|}$:
		\begin{equation}
			\| (\lambda {\rm Id} - e^{\mathbf{L}_{\beta}})^{-1} \| 
			\leq 2 \| (\lambda {\rm Id} - e^{\mathbf{L}_{\beta_0}})^{-1} \|,
		\end{equation}
		and hence
		\begin{equation}\label{Rn-R0}
		 	\begin{aligned}
		 		\| (\lambda {\rm Id} - e^{\mathbf{L}_{\beta}})^{-1} - (\lambda {\rm Id} - e^{\mathbf{L}_{\beta_0}})^{-1} \|
		 		&\leq \| (\lambda {\rm Id} - e^{\mathbf{L}_{\beta}})^{-1} \| \ 
		 		\|e^{\mathbf{L}_{\beta}} - e^{\mathbf{L}_{\beta_0}} \| \ 
		 		\| (\lambda {\rm Id} - e^{\mathbf{L}_{\beta_0}})^{-1} \|
		 		\\ &\leq 2 \| (\lambda {\rm Id} - e^{\mathbf{L}_{\beta_0}})^{-1} \|^2  \    \|e^{\mathbf{L}_{\beta}} - e^{\mathbf{L}_{\beta_0}} \|
		 		\stackrel{\beta \to \beta_0}{\longrightarrow} 0.
		 	\end{aligned}
		\end{equation}
		
		By Proposition \ref{prop-w_0-L_u} we know that $\|e^{\mathbf{L}_{\beta}}\|_{ess} \leq e^{\frac{2\alpha-2-d}{2\alpha }} < 1$, so $\sigma (e^{\mathbf{L}_{\beta}}) \cap \{z_0 :|z_0| > e^{\frac{2\alpha-2-d}{2\alpha }}\} \in \sigma_d (e^{\mathbf{L}_{\beta}})$ for any $\beta$. 
		Now take any $z_0 \in \sigma (e^{\mathbf{L}_{\beta_0}}) \cap \{z :|z| \geq 1\}$ and
		any simple closed curve $\Gamma$ such that $\Gamma^\circ \cap \sigma(\mathbf{L}_{\beta_0}) = z_0$ ($\Gamma^\circ$ denotes the interior of curve $\Gamma$).
		By (\ref{Rn-R0}) we have
		\begin{equation}\label{PRn-PR0}
			\lim_{\beta \to \beta_0} \left\| \frac{1}{2\pi \text{i}} \int_\Gamma R(\lambda,e^{\mathbf{L}_{\beta}}) d\lambda
			- \frac{1}{2\pi \text{i}} \int_\Gamma R(\lambda,e^{\mathbf{L}_{\beta_0}}) d\lambda \right\| = 0,
		\end{equation}
		which indicates $\frac{1}{2\pi \text{i}} \int_\Gamma R(\lambda,e^{\mathbf{L}_{\beta}}) d\lambda \neq 0$ for all $\beta$ sufficiently close to $\beta_0$.
		So for those $\beta$ there exist $z_\beta \in \sigma(\mathbf{L}_{\beta}) \cap \Gamma^\circ$.
		Now take any sequence $\beta^{(n)} \to \beta_0$ and a sequence of simple closed curves $\{ \Gamma_n \}$ contracting to $z_0$ such that ${\rm dist} (z_0 , \Gamma_n) < 1-e^{\frac{2\alpha-2-d}{2\alpha }}$.
		The above argument ensures existence of a $ z_{\beta^{(n)}} \in \sigma_d(e^{\mathbf{L}_{\beta^{(n)}}}) \cap \Gamma_n^\circ$ for each $n$. Obvious $z_{\beta^{(n)}} \to z_0$.

		On one hand, 
		by $w_0(\mathbf{L}_{\beta_0}) \geq 0$ and Proposition \ref{prop-w_0-L_u} there exists $z' \in \sigma_d(\mathbf{L}_{\beta_0})$ such that ${\rm Re}(z') = w_0(\mathbf{L}_{\beta_0})$.
		Obviously $e^{z'} \in \sigma(e^{\mathbf{L}_{\beta_0}}) \cap \{ z: |z| \geq 1 \}$. Taking $z_0 = e^{z'}$ in last paragraph, for any $\beta \to \beta_0$, we will be able to find
		$z_\beta \in \sigma(e^{\mathbf{L}_{\beta}})$ with $z_\beta \to e^{z'}$.
		%
		Then by Proposition \ref{Classical} we have $w_0(\mathbf{L}_{\beta}) = \log r(e^{\mathbf{L}_{\beta}} ) \geq \log |z_\beta|$, hence $\displaystyle\varliminf_{\beta \to \beta_0}w_0(\mathbf{L}_{\beta}) \geq \log |e^{z'}| = w_0(\mathbf{L}_{\beta_0})$.

		On the other hand, we should prove that for any $\beta \to \beta_0$, $\displaystyle\varlimsup_{\beta \to \beta_0}w_0(\mathbf{L}_{\beta}) \leq w_0(\mathbf{L}_{\beta_0})$.
		Assume the opposite, one would be able to find a sequence $\beta^{(n)} \to \beta_0$ such that 
		$w_1 := \displaystyle\lim_{n \to \infty}w_0(\mathbf{L}_{\beta^{(n)}}) > w_0(\mathbf{L}_{\beta_0})$ and $w_0(\mathbf{L}_{\beta^{(n)}}) > w_0(\mathbf{L}_{\beta_0}) \geq 0$ for all $n$.
		By Proposition \ref{prop-w_0-L_u} and Proposition \ref{Classical} there exist  $z_n \in \sigma_d(e^{\mathbf{L}_{\beta^{(n)}}})$ such that $\log |z_n| = w_0(\mathbf{L}_{\beta^{(n)}}) \stackrel{n \to \infty}{\longrightarrow} w_1$ and $\log |z_n| > w_0(\mathbf{L}_{\beta_0})$.
		Note that $ |z_n| \leq \|e^{\mathbf{L}_{\beta^{(n)}}}\| \leq e^{\|g\|_{w^{1,\infty}} |\beta^{(n)}| }$, 
		so $\{z_n\} $ is precompact, there exists a subsequence $ \{{n_k} \} \in \mathbb{N}$ such that $z_{n_k} \to z'$.
		Arguing with (\ref{PRn-PR0}) similarly as above, we know that $z' \in \sigma(e^{\mathbf{L}_{{\beta_0}}})$,
		which means $\log |z'| = \displaystyle\lim_{k \to \infty}\log |z_{n_k}|  =w_1 > w_0(\mathbf{L}_{\beta_0})$. 
		But by Proposition \ref{Classical}, $w_0(\mathbf{L}_{\beta_0}) = \log r(e^{\mathbf{L}_{\beta_0}} ) \geq \log |z'| > w_0(\mathbf{L}_{\beta_0})$, a contradiction!

		Combining the two sides, we have $\displaystyle\lim_{\beta \to \beta_0}w_0(\mathbf{L}_{\beta}) = w_0(\mathbf{L}_{\beta_0})$ for $\beta_0 \in  \mathcal{W}_0^+$.
	\end{proof}
	Use Proposition \ref{prop-3.1-a} and Proposition \ref{prop-2.3} we can prove Proposition \ref{prop-3.1} easily.
	\begin{proof}[\textbf{Proof of Proposition \ref{prop-3.1}}]
		By Proposition \ref{prop-3.1-a} we know $1 \in \mathcal{W}_0^+$ and $ 0 \notin \mathcal{W}_0^+$. 
		Let ${w}_0^+ := \inf\{a : [a,1] \in \mathcal{W}_0^+ \}$. Note that $\mathcal{W}_0^+$ is an closed set, so $[{w}_0^+,1] \subset \mathcal{W}_0^+$.
		%
		Then by $w_0(\mathbf{L}_{{{w}_0^+} g}) = \displaystyle\lim_{\beta \uparrow {w}_0^+} w_0(\mathbf{L}_{\beta g}) \leq 0$, we have $w_0(\mathbf{L}_{{{w}_0^+} g})  =0$. Since by Proposition \ref{prop-2.3}, $\beta \mapsto w_0( \mathbf{L}_{\beta g})$ is continuous on $[0,1]$, then $[0,w_0( \mathbf{L}_{g})] \subset \{ w_0( \mathbf{L}_{\beta g}) : \beta \in \mathcal{W}_0^+ \}$.
		So by intermediate value theorem for continuous function, given any $A >0$, we can always choose an $a_0 \in \left(0,A \wedge w_0( \mathbf{L}_{g}) \right)$ such that $w_0(\mathbf{L}_{\beta_0 g}) = a_0$ for some $\beta_0 \in (0,1)$ .
	\end{proof}
	\section{Proof of the Main Theorems}
	In this section, we will prove our main result. 
	First we will show that the solutions to (\ref{FNSE_0}) and (\ref{Eq-v}) can be converted to each other.
	Then in the second subsection, we will prove Proposition \ref{prop-2.1}.
	Next we use Proposition \ref{prop-2.1} to prove Theorem \ref{Thm-1}, using the strategy in subsection \ref{strategy} in the third subsection.
	In the fourth and fifth subsection we will prove Theorem \ref{Thm-2} and \ref{Thm-1.1} respectively.
	
	\subsection{Relationship between (\ref{FNSE_0}) and (\ref{Eq-v})}\label{subs-eq}
	The solutions to (\ref{FNSE_0}) and (\ref{Eq-v}) can be transformed into each other. Specifically we have the following proposition:
	\begin{proposition}\label{prop-u-v}
		Let $u$ be a solution to (\ref{FNSE_0}) in sense of Definition \ref{def-1.1}, then ${v} := e^ {-\gamma {\rm B}_t + \frac{1}{2}\gamma^2 t} {u} $ is a solution to (\ref{Eq-v}) in sense of Definition \ref{def-1.2}.
		Moreover,
		let $ v$ be a solution to (\ref{Eq-v}) in sense of Definition \ref{def-1.2} and the following energy equality holds:
		\begin{equation}\label{Eq-energy-equ-v}
			\|v(t \wedge \rho) \|_{L^2}^2 - \|u_0\|_{L^2}^2  + 2 \int_{0}^{t \wedge \rho} \|v(s)\|_{\dot{H}^{\frac{\alpha}{2}}}^2 {\rm d}s 
			= 2 \int_{0}^{t \wedge \rho} \langle h(t) f,v \rangle {\rm d}s  \ \ \mathbb{P} - {\rm a.s.},
		\end{equation}
		then $ u := e^ {\gamma {\rm B}_t - \frac{1}{2}\gamma^2 t} v$ is a solution to (\ref{FNSE_0}) in sense of Definition \ref{def-1.1}.
	\end{proposition}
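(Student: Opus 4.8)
The plan is to prove both implications by applying It\^o's formula to the multiplicative change of variables, exploiting that the Brownian and $\gamma^2$-drift contributions cancel exactly. Write $h(t)=e^{-\gamma {\rm B}_t+\frac12\gamma^2 t}$ and $k(t):=h(t)^{-1}=e^{\gamma {\rm B}_t-\frac12\gamma^2 t}$, so that $v=h\,u$ and $u=k\,v$. A direct It\^o computation gives $\,{\rm d}h=-\gamma h\,{\rm d}{\rm B}_t+\gamma^2 h\,{\rm d}t$ and $\,{\rm d}k=\gamma k\,{\rm d}{\rm B}_t$ (so $k$ is an exponential martingale), together with $\,{\rm d}(h^2)=-2\gamma h^2\,{\rm d}{\rm B}_t+3\gamma^2 h^2\,{\rm d}t$ and $\,{\rm d}(k^2)=2\gamma k^2\,{\rm d}{\rm B}_t+\gamma^2 k^2\,{\rm d}t$. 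Since $h,k$ are continuous, positive and $\mathscr F_t$-adapted, $v=hu$ inherits the regularity of Definition~\ref{def-1.2}(1) from $u$ (and conversely), adaptedness is preserved, the same stopping time $\rho$ is used, and the initial datum is unchanged because $h(0)=k(0)=1$. It thus remains to match the weak formulations and the energy relations.

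For the weak form, fix a divergence-free $\psi\in C^\infty_c$ and set $X_t:=\langle u,\psi\rangle$. In the forward direction $X$ solves $\,{\rm d}X_t=A_t\,{\rm d}t+\gamma X_t\,{\rm d}{\rm B}_t$ with $A_t=\langle u_iu_j,\partial_i\psi_j\rangle+\langle u,\Lambda^\alpha\psi\rangle+\langle f,\psi\rangle$. Applying the product rule to $\langle v,\psi\rangle=h_tX_t$ and using the cross-variation ${\rm d}\langle h,X\rangle=-\gamma^2 hX\,{\rm d}t$, the ${\rm d}{\rm B}_t$ terms ($\gamma hX-\gamma hX$) and the $\gamma^2$-drift terms both cancel, leaving $\,{\rm d}\langle v,\psi\rangle=h_tA_t\,{\rm d}t$; rewriting $h_tA_t$ via $u=kv$ (so $u_iu_j=h^{-2}v_iv_j$) produces exactly $h^{-1}\langle v_iv_j,\partial_i\psi_j\rangle+\langle v,\Lambda^\alpha\psi\rangle+\langle hf,\psi\rangle$, the weak form of (\ref{Eq-v}). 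The backward direction is cleaner: $\langle v,\psi\rangle$ is absolutely continuous in $t$ (its equation carries no noise), so the cross-variation in the product rule for $\langle u,\psi\rangle=k_t\langle v,\psi\rangle$ vanishes; since $\,{\rm d}k=\gamma k\,{\rm d}{\rm B}_t$ this regenerates the stochastic term $\gamma\langle u,\psi\rangle\,{\rm d}{\rm B}_t$, and using $k\,h^{-1}=k^2$ and $kh=1$ reproduces the weak form of (\ref{FNSE_0}).

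For the energy relations I would run the same computation on the squared $L^2$-norms. In the backward direction the hypothesis is the energy \emph{equality} (\ref{Eq-energy-equ-v}), so $\|v(\cdot\wedge\rho)\|_{L^2}^2$ is absolutely continuous with ${\rm d}\|v\|_{L^2}^2=(-2\|v\|_{\dot H^{\alpha/2}}^2+2\langle hf,v\rangle)\,{\rm d}t$. Applying It\^o to $\|u\|_{L^2}^2=k^2\|v\|_{L^2}^2$ (no cross-variation, as $\|v\|_{L^2}^2$ is of finite variation) and using $k^2\langle hf,v\rangle=\langle f,u\rangle$ and $k^2\|v\|_{\dot H^{\alpha/2}}^2=\|u\|_{\dot H^{\alpha/2}}^2$ reproduces precisely the energy equality ${\rm d}\|u\|_{L^2}^2=(-2\|u\|_{\dot H^{\alpha/2}}^2+2\langle f,u\rangle+\gamma^2\|u\|_{L^2}^2)\,{\rm d}t+2\gamma\|u\|_{L^2}^2\,{\rm d}{\rm B}_t$, which implies (\ref{Eq-energy-equ-u}). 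The forward direction is the delicate one: the hypothesis is only the energy \emph{inequality} (\ref{Eq-energy-equ-u}), which I write as $\|u(\cdot\wedge\rho)\|_{L^2}^2=S_t-R_t$, with $S_t$ the explicit semimartingale formed by the right-hand side and $R_t\ge0$ the energy defect. If $R$ is nondecreasing, $\|u\|_{L^2}^2$ is a semimartingale and It\^o applied to $\|v\|_{L^2}^2=h^2\|u\|_{L^2}^2$ gives ${\rm d}\|v\|_{L^2}^2=(-2\|v\|_{\dot H^{\alpha/2}}^2+2\langle hf,v\rangle)\,{\rm d}t-h^2\,{\rm d}R$, where once again the $\gamma^2$-drift and the ${\rm d}{\rm B}_t$ terms cancel exactly. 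As $h^2>0$ and ${\rm d}R\ge0$, integrating yields the pathwise bound $\|v(t\wedge\rho)\|_{L^2}^2-\|u_0\|_{L^2}^2+2\int_0^{t\wedge\rho}\|v\|_{\dot H^{\alpha/2}}^2\le 2\int_0^{t\wedge\rho}\langle hf,v\rangle$, and taking expectations delivers (\ref{Eq-energy-ine-v}).

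The main obstacle is precisely this last step: carrying the defect $R$ with the correct sign. Bounding $\|u\|_{L^2}^2\le S_t$ \emph{before} multiplying by $h^2$ is too lossy---tracking $S_t-\|u\|_{L^2}^2=R_t$ through the product rule shows that discarding it would leave a spurious nonnegative term $3\gamma^2\int_0^{t\wedge\rho}h^2R\,{\rm d}s$ on the wrong side of the inequality---so one must genuinely retain $-\int_0^{t\wedge\rho}h^2\,{\rm d}R$ and use its definite sign. This forces the reading of (\ref{Eq-energy-equ-u}) as holding with a nonnegative, nondecreasing dissipation defect (equivalently, for all $t$ simultaneously, $\mathbb P$-a.s.), and the verification that $h^2R$ and the relevant integrands are integrable so that the stochastic integrals encountered are genuine mean-zero martingales. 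Once the defect is handled, all remaining steps are the routine cancellation bookkeeping already displayed.
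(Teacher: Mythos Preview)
Your treatment of the weak formulation (in both directions) and of the backward energy direction matches the paper and is correct. The divergence is in the forward energy direction, and you have put your finger precisely on the difficulty: Definition~\ref{def-1.1} only asserts the energy inequality at each time $t$, which gives $R_t\ge 0$ pointwise but not that $R$ is nondecreasing, so $\|u\|_{L^2}^2$ is not a priori a semimartingale and your application of It\^o to $h^2\|u\|_{L^2}^2$ is not justified without the stronger reading you flag at the end.

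The paper sidesteps this by a different device: rather than applying It\^o to $h^2\|u\|_{L^2}^2$, it introduces the auxiliary process $X(t)$ equal to the \emph{right-hand side} of the energy inequality (your $S_t$), which is manifestly a continuous semimartingale with ${\rm d}X=A\,{\rm d}t+2\gamma\|u\|_{L^2}^2\,{\rm d}{\rm B}_t$. It\^o's formula then applies directly to $h^2X$ with no hypothesis on $R$, and the pointwise bound $\|u\|_{L^2}^2\le X$ is invoked only at the very end via $\mathbb E\|v\|_{L^2}^2=\mathbb E[h^2\|u\|_{L^2}^2]\le\mathbb E[h^2X]$, after which one reads off (\ref{Eq-energy-ine-v}). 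This trades your structural assumption on the defect for a computation on an explicit semimartingale. Your route is cleaner when the defect \emph{is} nondecreasing (and indeed the solutions actually constructed later in the paper satisfy energy \emph{equality}, so $R\equiv 0$), but the paper's route is the one that matches the hypotheses of Definition~\ref{def-1.1} as literally stated.
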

	\begin{proof}[\textbf{proof}]
		For any solution to (\ref{FNSE_0}) $u$ with a $\mathscr{F}_t$-stopping time $ \rho > 0 $, define $\widetilde{u}(t) := u(t\wedge \rho) $ is a $\mathscr{F}_t$-adopt process. For any $\psi \in C^\infty_c (\mathbb{R}^d)$, let $\widetilde{\psi} = \psi 1_{t \leq \rho}$, then:
		\begin{equation}
			\begin{aligned}
				\langle \widetilde{u}(t ),\widetilde{\psi} \rangle 
				=& \int_{0}^{t } \langle \widetilde{u}_i(s) \widetilde{u}_j(s), \partial_i \widetilde{\psi}_j \rangle {\rm d}s
				+ \int_{0}^{t } \langle \widetilde{u}(s),\Lambda^\alpha \widetilde{\psi} \rangle {\rm d}s 
				\\ &+ \int_{0}^{t} \langle f(s),\widetilde{\psi} \rangle {\rm d}s
				+ \int_{0}^{t} \langle \gamma \widetilde{u}(s),\widetilde{\psi} \rangle {\rm d}{\rm B}_s
				+ \langle u_0,\widetilde{\psi} \rangle,
			\end{aligned}
		\end{equation}
		so $\langle \widetilde{u}(t),\widetilde{\psi} \rangle $ is a semimartingale. Use It\^{o} formula to $e^ {\gamma {\rm B}_t - \frac{1}{2}\gamma^2 t} \langle \widetilde{u},\widetilde{\psi} \rangle$ we get $\widetilde{v} := v(t\wedge \rho)$ is a weak solution to (\ref{Eq-v}) immediately.
		
		Define a new stochastic process:
		\begin{equation}
			\begin{aligned}
				X(t) := 
				\underbrace{ \|u_0\|_{L^2}^2
					+ 2 \int_{0}^{t} \langle f,\widetilde{u} \rangle - \|\widetilde{u}(s)\|_{\dot{H}^{\frac{\alpha}{2}}}^2 {\rm d}s +\gamma^2 \int_{0}^{t} \|\widetilde{u}(s)\|_{L^2}^2 {\rm d} s }_{=: \|u_0\|_{L^2}^2 + \int_{0}^{t}A(s) {\rm d} s}
				+ \underbrace{ 2\gamma \int_{0}^{t} \|\widetilde{u}(s)\|_{L^2}^2 {\rm d} {\rm B}_s  }_{=: 2\gamma \int_{0}^{t} M(s) {\rm d} {\rm B}_s} .
			\end{aligned}
		\end{equation}
		Let $t <\rho$, by energy inequality $M(t) = \|u(t)\|_{L^2}^2 \leq X(t)$. By It\^{o} formula:
		\begin{equation}\label{Eq-energy-ine-X}
			\begin{aligned}
				e^{-2\gamma {\rm B}_t + \gamma^2 t} X(t) - \|u_0\|_{L^2}^2
				=& \int_{0}^{t} \gamma^2 e^{-2\gamma {\rm B}_s + \gamma^2 s} X(s) {\rm d} s
				+ \int_{0}^{t} -2\gamma e^{-2\gamma {\rm B}_s + \gamma^2 s} X(s) {\rm d} {\rm B}_s 
				+ \frac{1}{2} \int_{0}^{t} 4\gamma^2 e^{-2\gamma {\rm B}_s + \gamma^2 s} X(s) {\rm d} s
				\\ &+ \int_{0}^{t} e^{-2\gamma {\rm B}_s + \gamma^2 s} A(s){\rm d} s 
				+  \int_{0}^{t} e^{-2\gamma {\rm B}_s + \gamma^2 s} 2\gamma M(s) {\rm d} {\rm B}_s
				+ \int_{0}^{t} -4\gamma^2 e^{-2\gamma {\rm B}_s + \gamma^2 s}M(s){\rm d} s
				%
				%
				\\ \leq& \int_{0}^{t} -\gamma^2 e^{-2\gamma {\rm B}_s + \gamma^2 s}M(s){\rm d} s 
				+ \int_{0}^{t} e^{-2\gamma {\rm B}_s + \gamma^2 s} A(s){\rm d} s
				+  \int_{0}^{t} e^{-2\gamma {\rm B}_s + \gamma^2 s} 2\gamma \left(M(s)-X(s)\right) {\rm d} {\rm B}_s.
			\end{aligned}
		\end{equation}
		So we have
		\begin{equation}
			\begin{aligned}
				\mathbb{E} \|v(t \wedge \rho)\|_{L^2}^2 = \mathbb{E} e^{-2\gamma B_{t \wedge \rho} + \gamma^2 {t \wedge \rho}} M({t \wedge \rho}) 
				\leq& \mathbb{E} e^{-2\gamma B_{t \wedge \rho} + \gamma^2 {t \wedge \rho}} X({t \wedge \rho}) 
				\\ \leq& \mathbb{E} \int_{0}^{{t \wedge \rho}} e^{-2\gamma {\rm B}_s + \gamma^2 s} \left(A(s)- \gamma^2 M(s)\right){\rm d} s,
			\end{aligned}
		\end{equation}
		which is exactly (\ref{Eq-energy-ine-v}).
		So we prove ${v} := e^ {-\gamma {\rm B}_t + \frac{1}{2}\gamma^2 t} {u} $ is a solution to (\ref{Eq-v}) in sense of Definition \ref{def-1.2}.
		
		If $v$ is a solution to (\ref{Eq-v}) with equality (\ref{Eq-energy-equ-v}), 
		by similar way it is not hard to prove $u := e^ {\gamma {\rm B}_t - \frac{1}{2}\gamma^2 t} v $ is a solution to (\ref{FNSE_0}) in sense of Definition \ref{def-1.1}.
	\end{proof}

	\subsection{Proof of Proposition \ref{prop-2.1}} \label{New-way}
	This subsection primarily proves Proposition \ref{prop-2.1}.
	For that, we first establish a lemma concerning the existence of solutions to the equation in a general context.
	
	Consider the following equation:
	\begin{equation}\label{Eq-le-1}
		\left\{
		\begin{aligned}
			&\partial_t U = \mathbf{A} U + \mathbf{B}_t U,
			\\ &\lim_{t\to -\infty} \|U(t)\|_{H} = 0.
		\end{aligned}
		\right.
	\end{equation}
	Here $ \mathcal{H} $ is a Hilbert space, $ \mathcal{V} $ is a Banach space and $ \mathcal{V}'\subset \mathcal{H} \cap \mathcal{V}$ is a Banach space such that $\|\cdot\|_{\mathcal{H}} + \|\cdot\|_{\mathcal{V}} \leq \|\cdot\|_{\mathcal{V}'} $. $ \mathbf{A} $ is a densely defined linear operator on $H$, which generates a semigroup $e^{t\mathbf{A}}$ on $ H $, $\{\mathbf{B}_t;t\in \mathbb{R}\} $ is a family of continuous operators (generally non-linear) from $ \mathcal{V} $ to $\mathcal{H}$.
	
	\begin{lemma}\label{lemma-contract}
%
		Assume for some $ a_1 > a_0 > 0$,
		\\
		{\bf (1)}
		\begin{equation}\label{Eq-63}
			\begin{aligned}
				w_0(A) = a_0;
				\\ 
				\displaystyle\sup_{t \in (0,t_0)} \big( t^{\gamma}\| e^{t\mathbf{A}} \|_{\mathcal{H} \to \mathcal{V}'}\big) < \infty ,\ &for\  some \ t_0 >0 ,\gamma \in (0,1);
			\end{aligned}
		\end{equation}
		\\
		{\bf (2)}
		For all $ U, V \in \mathcal{U} := \{ U: \displaystyle\sup_{t<0}\|e^{-a_1 t} U(t)\|_{\mathcal{V}'} \leq 1 \} $ and $t < 0$,
		\begin{equation}\label{conditon-B}
			\begin{aligned}
				\| \mathbf{B}_t U(t) \|_{\mathcal{H}} &\leq M(t) e^{a_1 t},
				\\ \| \mathbf{B}_t U(t) - \mathbf{B}_t V(t) \|_{\mathcal{H}} &\leq M(t) \| U(t) - V(t) \|_{\mathcal{V}},
			\end{aligned}
		\end{equation}
		for some  
		continuous function $M(t),t \in \mathbb{R}$ s.t. $\displaystyle\limsup_{t\to -\infty} M(t) = 0$.

		Then there exists $ T_0 $ such that the equation (\ref{Eq-le-1}) has a solution $ u \in C \big( (-\infty, T_0); \mathcal{V}' \big) $ satisfying $ \displaystyle\sup_{t \leq T_0} e^{-a_1 t}\|U(t)\|_{\mathcal{V}'} < \infty  $.
		%
		Moreover, $T_0$ can be selected as $ \inf\{ t: \displaystyle\limsup_{\tau <t} M(\tau) \leq C_0\} $ where $C_0$ is a constant depends on spaces $\mathcal{H},\mathcal{V},\mathcal{V}'$ and constants $a_0, a_1,\gamma$.
	\end{lemma}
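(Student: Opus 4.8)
The plan is to recast the problem \eqref{Eq-le-1} as its mild (Duhamel) form and to solve it by a weighted contraction mapping. Because the solution is prescribed by a \emph{backward} condition at $-\infty$ and the semigroup has growth bound $w_0(\mathbf{A})=a_0>0$, the natural integral representation is
\[
	U(t)=\int_{-\infty}^{t}e^{(t-s)\mathbf{A}}\,\mathbf{B}_sU(s)\,{\rm d}s,
\]
which integrates the variation-of-constants formula from $-\infty$ and automatically discards any homogeneous part: such a part would grow like $e^{a_0 t}$ as $t\to-\infty$, incompatible with the decay we seek. I would take as the solution space the weighted ball
\[
	X:=\Big\{U\in C\big((-\infty,T_0);\mathcal{V}'\big):\ \|U\|_*:=\sup_{t<T_0}e^{-a_1 t}\|U(t)\|_{\mathcal{V}'}\leq 1\Big\},
\]
denote by $\mathcal{T}$ the map sending $U$ to the right-hand side above, and fix the threshold $T_0$ at the end.

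First I would establish a global kernel estimate for $\|e^{\tau\mathbf{A}}\|_{\mathcal{H}\to\mathcal{V}'}$. The short-time bound $\|e^{\tau\mathbf{A}}\|_{\mathcal{H}\to\mathcal{V}'}\leq C\tau^{-\gamma}$ on $(0,t_0)$ is exactly assumption \eqref{Eq-63}. For $\tau\geq t_0$ I would write $e^{\tau\mathbf{A}}=e^{(t_0/2)\mathbf{A}}e^{(\tau-t_0/2)\mathbf{A}}$ and combine the regularizing factor $\|e^{(t_0/2)\mathbf{A}}\|_{\mathcal{H}\to\mathcal{V}'}<\infty$ with the estimate $\|e^{\sigma\mathbf{A}}\|_{\mathcal{H}\to\mathcal{H}}\leq C_\varepsilon e^{(a_0+\varepsilon)\sigma}$ that $w_0(\mathbf{A})=a_0$ provides. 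Choosing $\varepsilon$ so small that $\bar a:=a_0+\varepsilon<a_1$, this gives $\|e^{\tau\mathbf{A}}\|_{\mathcal{H}\to\mathcal{V}'}\leq C'e^{\bar a\tau}$ for $\tau\geq t_0$.

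Next I would run the self-map and contraction estimates. For $U\in X$, condition \eqref{conditon-B} yields $\|\mathbf{B}_sU(s)\|_{\mathcal{H}}\leq M(s)e^{a_1 s}$, so after the substitution $\tau=t-s$ and with $\overline M(t):=\sup_{\sigma<t}M(\sigma)$,
\[
	e^{-a_1 t}\|(\mathcal{T}U)(t)\|_{\mathcal{V}'}\leq \overline M(t)\int_0^{\infty}\|e^{\tau\mathbf{A}}\|_{\mathcal{H}\to\mathcal{V}'}\,e^{-a_1\tau}\,{\rm d}\tau=:C_1\,\overline M(t).
\]
The decisive point is that $C_1<\infty$: near $\tau=0$ the integrand is $\lesssim\tau^{-\gamma}$, integrable since $\gamma<1$; near $\tau=\infty$ it is $\lesssim e^{(\bar a-a_1)\tau}$, integrable since $\bar a<a_1$. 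The Lipschitz half of \eqref{conditon-B}, together with $\|\cdot\|_{\mathcal{V}}\leq\|\cdot\|_{\mathcal{V}'}$, gives the identical bound $\|\mathcal{T}U-\mathcal{T}V\|_*\leq C_1\,\overline M(T_0)\,\|U-V\|_*$. Hence, setting $C_0:=(2C_1)^{-1}$ and $T_0:=\inf\{t:\ \overline M(t)\leq C_0\}$ — which is $>-\infty$ because $\limsup_{t\to-\infty}M(t)=0$ forces $\overline M(t)\to0$ — the map $\mathcal{T}$ is a $\tfrac12$-contraction of $X$ into itself.

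The Banach fixed point theorem then produces a unique $U\in X$ with $U=\mathcal{T}U$; strong continuity of $e^{\tau\mathbf{A}}$ and the kernel bound show $U\in C((-\infty,T_0);\mathcal{V}')$. That $U$ solves \eqref{Eq-le-1} follows by differentiating the Duhamel formula, while the limit condition is immediate from $\|U(t)\|_{\mathcal{H}}\leq\|U(t)\|_{\mathcal{V}'}\leq e^{a_1 t}\to0$ as $t\to-\infty$. I expect the main obstacle to be the kernel estimate and the attendant bookkeeping of exponents: everything hinges on arranging $a_0<\bar a<a_1$ so that the convolution kernel is simultaneously integrable at the origin (via $\gamma<1$) and exponentially summable at infinity (via $\bar a<a_1$); the smallness of $M$ near $-\infty$ is then exactly what localizes the contraction to $(-\infty,T_0)$ and pins down the threshold $C_0$ in the stated formula for $T_0$.
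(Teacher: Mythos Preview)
Your proposal is correct and follows essentially the same route as the paper: recast \eqref{Eq-le-1} in Duhamel form $U(t)=\int_{-\infty}^{t}e^{(t-s)\mathbf{A}}\mathbf{B}_sU(s)\,{\rm d}s$, work in the weighted ball $\{\,\sup_{t<T_0}e^{-a_1 t}\|U(t)\|_{\mathcal{V}'}\leq1\,\}$, split the kernel bound into a short-time regularizing piece $\tau^{-\gamma}$ and a long-time exponential piece, and close a contraction once $\sup_{\tau<T_0}M(\tau)$ is small. Your use of $\bar a=a_0+\varepsilon<a_1$ for the long-time semigroup estimate is in fact slightly more careful than the paper, which writes the bound with $a_0$ directly.
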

	\begin{proof}[\textbf{proof}]
		%
		For any fixed $ T > -\infty $, we define $ \| U \|_{X_T} := \displaystyle\sup_{t<T} \|e^{-a_1 t} u(t)\|_{\mathcal{V}'} $ and the set $ \mathcal{X}_T := \{ U: \|U\|_{X_T} \leq 1 \} $.
		
		Consider the mapping:
		\begin{equation}
			\begin{aligned}
				\Gamma : \mathcal{X}_T \to& \mathcal{X}_T,
				\\ U \mapsto& \int_{-\infty}^{t} e^{(t-s) \mathbf{A}}  \mathbf{B}_s U(s) {\rm d}s.
			\end{aligned}
		\end{equation}
		
		Then for $ U \in X_T $, by (\ref{Eq-63}) and (\ref{conditon-B}) we have:
		\begin{equation}
			\begin{aligned}
				\|\Gamma U\|_{X_T} \leq & \sup_{t <T} e^{-a_1 t} \int_{-\infty}^{t} \| e^{(t-s) \mathbf{A}} \mathbf{B}_s U(s) \|_{\mathcal{V}'}  {\rm d}s
				\\ \leq & \sup_{t <T}e^{-a_1 t} \int_{-\infty}^{t} \|e^{(t-s) \mathbf{A}}\|_{\mathcal{H} \to \mathcal{V}'} 
				\| \mathbf{B}_s U(s)\|_{\mathcal{H}} {\rm d}s
				\\ \leq & \sup_{t <T} \int_{-\infty}^{t} \|e^{(t-s) \mathbf{A}}\|_{\mathcal{H} \to \mathcal{V}'} 
				M(s)  e^{-a_1 (t-s)}{\rm d}s
				\\ \lesssim & \sup_{t <T} \int_{-\infty}^{t-t_0}  e^{(t-s-t_0) a_0} M(s) e^{-(t-s)a_1} {\rm d}s
				+ \sup_{t <T} \int_{t-t_0}^{t}  (t-s)^{-\gamma} M(s) e^{-(t-s)a_1} {\rm d}s 
				\\ \lesssim&  \sup_{t <T} M(t) \left(\int_{-\infty}^{T} e^{(T-s) (a_0-a_1) } e^{-t_0a_1} {\rm d}s 
				+ \int_{0}^{t_0} s^{-\gamma} {\rm d}s\right)
				\\ \lesssim& \sup_{t <T} M(t) \left(\frac{1}{a_1-a_0} + \frac{t_0^{1-\gamma}}{1-\gamma}\right)  .
			\end{aligned}
		\end{equation}
		
		Similarly, when $ u, v \in \mathcal{X}_T $, we can obtain:
		\begin{equation}
			\begin{aligned}
				\|\Gamma (U-V)\|_{X_T} \leq & \sup_{t <T} e^{-a_1 t} \int_{-\infty}^{t} \| e^{(t-s) \mathbf{A}} (\mathbf{B}_s U(s) - \mathbf{B}_s V(s) ) \|_{\mathcal{V}'}  {\rm d}s
				\\ \leq & \sup_{t <T}e^{-a_1 t} \int_{-\infty}^{t} \|e^{(t-s) \mathbf{A}}\|_{H \to \mathcal{V}'} 
				M(s)\| U(s)-V(s)\|_{\mathcal{H}} {\rm d}s
				\\ \lesssim& \sup_{t <T} M(t) \left(\frac{1}{a_1-a_0} + \frac{t_0^{1-\gamma}}{1-\gamma} \right) \|U-V\|_{X_T} .
			\end{aligned}
		\end{equation}
		%
		%
		
		Therefore, there exists a sufficiently small $ T $ such that $ \Gamma $ is a contraction mapping on $ \mathcal{X}_T \to \mathcal{X}_T $. By the fixed-point theorem, the equation (\ref{Eq-le-1}) has a fixed point $ u \in \mathcal{X}_T $. We denote such a sufficiently small $ T $ as $ T_0 $.
		%
		
		It is also necessary to prove that such solutions are continuous. It suffices to show that $ \Gamma(U) \in C\big( (-\infty, T); \mathcal{V}' \big) $ if $ u \in \mathcal{X}_T $.
		%
		
		First, we consider the case where $ t_n \downarrow \bar{t} $.
		\begin{equation}
			\begin{aligned}
				&\| \Gamma (U)(t_n) - \Gamma (U)(\bar{t}) \|_{\mathcal{V}'} 
				\\ \leq& \left\| \int_{-\infty}^{\bar{t}} (e^{(t_n -s)\mathbf{A} } - e^{(\bar{t} -s)\mathbf{A} }) \mathbf{B}u(s){\rm d}s \right\|_{\mathcal{V}'} + \left\| \int_{\bar{t}}^{t_n} e^{(t_n -s)\mathbf{A} } \mathbf{B}U(s){\rm d}s \right\|_{\mathcal{V}'}
				\\ =& \left\| \int_{-\infty}^{\bar{t}} e^{(\bar{t} -s)\mathbf{A} } (e^{(t_n -\bar{t})\mathbf{A} } -{\rm Id} ) \mathbf{B}u(s){\rm d}s \right\|_{\mathcal{V}'} 
				+ \left\| \int_{\bar{t}}^{t_n} e^{(t_n -s)\mathbf{A} } \mathbf{B}u(s){\rm d}s \right\|_{\mathcal{V}'}
				\\ \leq& \int_{-\infty}^{\bar{t}} \| e^{(\bar{t} -s)\mathbf{A} }\|_{\mathcal{H} \to \mathcal{V}'} \|e^{(t_n -\bar{t})\mathbf{A} } -{\rm Id} \|_{\mathcal{H} \to \mathcal{H}} \| \mathbf{B}u(s) \|_{\mathcal{H}} {\rm d}s 
				+ \int_{\bar{t}}^{t_n} \| e^{(t_n -s)\mathbf{A} } \|_{\mathcal{H} \to \mathcal{V}'} \| \mathbf{B}u(s) \|_{\mathcal{H}} {\rm d}s
				\\ \lesssim& \|e^{(t_n -\bar{t})\mathbf{A} } -{\rm Id} \|_{\mathcal{H} \to \mathcal{H}} M(\bar{t}) e^{a_1 \bar{t}}
				+ M(t_n) |t_n - \bar{t}|^{1-\gamma} e^{a_1 t_n}.
			\end{aligned}
		\end{equation}
		%
		%
		%
		Thus, the function $\Gamma(U)$ is right-continuous.
		Similarly, it can be shown to be left-continuous. Therefore, when $ U \in \mathcal{X}_T $, we have $ \Gamma(U) \in C\big( (-\infty, T_0); \mathcal{V}' \big) $. This implies that under the conditions of the lemma, there exists a solution in $ C\big( (-\infty, T_0); \mathcal{V}' \big) $.
	\end{proof}  
	
	Then we back to prove the Proposition \ref{prop-2.1}.
	\begin{proof}[\textbf{Proof of Proposition \ref{prop-2.1}}]
		Let $ \| U \|_{X_T} := \displaystyle\sup_{t<T} \|e^{-a_1 t} u(t)\|_{\mathcal{V}'} $ and the set $ \mathcal{X}_T := \{ U: \|U\|_X \leq 1 \} $. We will prove Proposition \ref{prop-2.1} by Lemma \ref{lemma-contract}.

		\textbf{The\ existence\ part:} For this , it is sufficient to verify the conditions of Lemma \ref{lemma-contract}.
		
		Now we denote $ Q_s(\cdot) = H(s) \mathbf{B}(\cdot,\cdot) + \widetilde{\mathbf{B}}(\cdot,\widetilde{U}(s)) + \widetilde{F}(s) $. For $ U, V \in X_T $,
		\begin{equation}
			\begin{aligned}
				\|Q_sU(s)\|_{\mathcal{H}} 
				\lesssim& H(s) \|U(s)\|_{\mathcal{V}}^2 +
				\|\widetilde{U}(s)\|_{\mathcal{V}} \|U(s)\|_{\mathcal{V}} 
				+ \|\widetilde{F}(s)\|_{\mathcal{H}}
				\\ \lesssim& H(s)e^{2 a_1 s}\|U\|_X^2 
				+ e^{\varepsilon s}  e^{a_1 s} \|U\|_X 
				+ e^{\varepsilon s} e^{a_1 s}
				\\ \lesssim& e^{\varepsilon s}e^{a_1 s}, \ s\in (-\infty ,T_0 \wedge 0),
			\end{aligned}			
		\end{equation}
		\begin{equation}
			\begin{aligned}
				\|Q_sU(s) - Q_sV(s)\|_{\mathcal{H}} 
				 \lesssim& \|U(s)-V(s)\|_{\mathcal{V}}(\|U(s)\|_{\mathcal{V}} + \|V(s)\|_{\mathcal{V}} )
				+ \|\widetilde{U}(s)\|_{\mathcal{V}} \|U(s)-V(s)\|_{\mathcal{V}}
				\\ \lesssim &  e^{ a_1 s}(\|U\|_X + \|V\|_X)\|U(s)-V(s)\|_{\mathcal{V}}
				+ e^{\varepsilon s} \|U(s)-V(s)\|_{\mathcal{V}} 
				\\ \lesssim & e^{\varepsilon s} \|U(s)-V(s)\|_{\mathcal{V}} , \ s\in (-\infty ,T_0 \wedge 0).
			\end{aligned}			
		\end{equation}
		%
		Take $ M(s) = Ce^{\varepsilon s} $ in Lemma \ref{lemma-contract}, then all conditions of Lemma \ref{lemma-contract} are satisfied.
		
		By Lemma \ref{lemma-contract} we can choose constant $C_0$ such that the equation (\ref{Eq-Normal}) has a solution $ U_r $ on $[0,T_0]$ where $T_0:= \inf\{ t : \displaystyle\sup_{\tau \leq t} \{ \| \widetilde{U}(\tau,\xi) \|_{\mathcal{V}'} + \|e^{-a_1 \tau} \widetilde{F}(\tau,\xi) \|_{\mathcal{H}} + |H(\tau)| \} \leq C_0 \} $
		and $U_r$ satisfies $ \|U_r(\tau)\|_{\mathcal{V}'} \leq M e^{a_1\tau} $.
%
%
		
		\textbf{The\ non-uniqueness\ part:}
		Let $ U_l(\tau) := \text{Re} (e^{z_0 \tau} \eta_0) $. It not hard to check 
		\begin{equation}
			\partial_\tau U_l = \mathbf{L}(U_l),
		\end{equation}
		and that $ \|e^{-a_0 \tau} U_l(\tau)\|_V $ is a periodic function in $\tau$ (note that $z_0 =a_0 + i b_0$).
		
		To get different solutions, we construct another solution $ U_l + U_p $ for the equation (\ref{Eq-Normal}). 
		This means that we have to solve
		\begin{equation}
			\begin{aligned}
				\partial_\tau U_p = \mathbf{L}(U_p) + H \mathbf{B}(U_p,\, U_p) + \widetilde{\mathbf{B}} (U_p ,\,  \widetilde{U}+H U_l ) + F_p,
			\end{aligned}			
		\end{equation}
		where $F_p(\tau) :=H(\tau) \mathbf{B}(U_l(\tau),\, U_l(\tau)) +\widetilde{\mathbf{B}}(U_l(\tau),\, \widetilde{U}(\tau)) +\widetilde{F}(\tau)$.
		
		The equation is in the same form as equation(\ref{Eq-Normal}) and we apply lemma \ref{lemma-contract} again to yield a solution $ U_p \in C_{(-\infty,T)} \mathcal{V}' $ such that $ \|U_p(\tau)\|_{\mathcal{V}'} \leq M e^{(a_0 +\frac{\varepsilon}{2})\tau} $.
		Verification of conditions is similar:
		\begin{equation}
			\begin{aligned}
				\|\widetilde{U}(\tau)+H(\tau)  U_l(\tau)\|_{\mathcal{V}} \leq \|\widetilde{U}(\tau)\|_{\mathcal{V}}+ H (\tau) \|U_l(\tau)\|_{\mathcal{V}}
				\lesssim e^{\varepsilon \tau} +  e^{a_0 \tau},
			\end{aligned}			
		\end{equation}
		\begin{equation}
			\begin{aligned}
				\|F_p(\tau)\|_{\mathcal{H}} &\leq H(\tau)\| \mathbf{B}(U_l(\tau),\, U_l(\tau))\|_{\mathcal{H}} + \| \widetilde{\mathbf{B}}(U_l(\tau),\, \widetilde{U}(\tau)) \|_{\mathcal{H}} + \|F(\tau)\|_{\mathcal{H}}
				\\ &\lesssim \|U_l(\tau)\|_{\mathcal{V}}^2 + \|U_l(\tau)\|_{\mathcal{V}} \|\widetilde{U}(\tau)\|_{\mathcal{V}}+ \|F(\tau)\|_{\mathcal{H}}
				\\ &\lesssim e^{2a_0 \tau} + C e^{(a_0+\varepsilon) \tau} + C e^{(a_1+\varepsilon) \tau} 
				%
				\lesssim e^{(a_0+\varepsilon) \tau}.
			\end{aligned}			
		\end{equation}

		%
		
		Now, both $ U_r $ and $ U_l + U_p $ are solutions to the equation (\ref{Eq-Normal}). So $ \displaystyle\lim_{\tau \to -\infty} e^{-a_0 \tau} U_r(\tau) =\displaystyle\lim_{\tau \to -\infty} e^{-a_0 \tau} U_p(\tau) = 0 $ while $ \displaystyle\limsup_{\tau \to -\infty}e^{-a_0 \tau} \|U_l(\tau)\|_{\mathcal{V}'} <\infty $, $ \displaystyle\lim_{\tau \to -\infty}e^{-a_0 \tau} U_l(\tau) $ does not exist, showing that $ U_r \neq U_l + U_p $.
		This completes the proof.
		%
		%
	\end{proof}

	\subsection{Proof of Theorem \ref{Thm-1}}\label{Proof of Thm-1}
	We introduce some notations in this subsection.
%
%
	We may assume without loss of generality that $p_0 \in (2,4)$ since $ \alpha \in \left( 1+ \frac{d}{4},1+ \frac{d}{2} \right)$.
	Denote $ \mathcal{H} = L^2_{\rm div} $,
	let $ \mathcal{V} := W^{1,p_0} \cap L^q , \ \mathcal{V}' := \mathcal{H} \cap \mathcal{V} \cap \dot{H}^{\frac{\alpha}{2} \vee 1} $
	equiped with the norm
	\begin{equation}
		\begin{aligned}
			\|\cdot\|_{\mathcal{V}'} := \|\cdot \|_{\mathcal{H}} + \| \cdot \|_{\mathcal{V}}+ \| \cdot \|_{\dot{H}^{\frac{\alpha}{2} \vee 1} },
		\end{aligned}		
	\end{equation}
	where $ \frac{1}{q} + \frac{1}{p_0 }= \frac{1}{2} $.
	%
	Note that $\| \mathbf{B}(U,V) \|_{\mathcal{H}} \leq \|U \|_{L^q} \|\nabla V\|_{L^{ p_0 }} \leq \|U\|_{\mathcal{V}}\|V\|_{\mathcal{V}} $.
	By (\ref{exp-tra}), we convert equation (\ref{FNSE_0}) into the random PDEs (\ref{Eq-v}).
	To prove Theorem \ref{Thm-1} we only need to show non-uniqueness of (\ref{Eq-v}) via Proposition \ref{prop-2.1}, by Proposition \ref{prop-u-v}.

	\begin{theorem}\label{Thm-4.1}
			Let $d \in \{2,3\},\alpha \in ( \frac{1}{2}+\frac{d}{4},1+\frac{d}{2} )$ , $ p_0>{\frac{d}{\alpha-1}} $,
			there exist a deterministic $f_0 \in L^1_t L^2_x (\mathbb{R}^+ \times \mathbb{R}^d) $,
			such that the equation (\ref{Eq-v}) has two local solutions $ v_1 $ and $ v_2 $ in sense of Definition (\ref{def-1.2}) which satisfy energy equality (\ref{Eq-energy-equ-v}) and $ v_1 \neq v_2 \ \mathbb{P} -{\rm a.s.}$ for any $\gamma \geq 0$, initial data $u_0 \in L^2_{\rm div} \cap L^{p_0}$ and force $f= f_0 +f_1$ where $\ \|f_1(t)\|_{L^2} = O(t^{\frac{2+d-4\alpha}{2\alpha} + \varepsilon_0}) \ {\rm as}\  t \ \to 0$.
		\end{theorem}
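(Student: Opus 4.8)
The plan is to realize the random PDE (\ref{Eq-v}) as an instance of the abstract equation (\ref{Eq-Normal}) in similarity variables and then invoke Proposition \ref{prop-2.1}. First I would fix a threshold $A>0$ (to be taken small) and apply Proposition \ref{prop-3.1} to produce a background profile $\bar U=\bar U_a\in C^\infty_c$ together with an $a\in(0,A)$ such that $\mathbf L:=\mathbf L_{\bar U}$ has eigenvalue $z_0=a+ib_0$ and growth bound $w_0(\mathbf L)=a$. I define $F_0$ by (\ref{F_0}) and, inverting (\ref{tra-1})--(\ref{tra-2}), the deterministic force $f_0(x,t)=t^{1/\alpha-2}F_0\big(x/t^{1/\alpha}\big)$; a direct use of the scaling identities (\ref{26}) gives $\|f_0(t)\|_{L^2_x}=\|F_0\|_{L^2}\,t^{(2+d-4\alpha)/(2\alpha)}$, which is integrable near $t=0$ precisely because $\alpha<1+\tfrac d2$ (one extends $f_0$ by a cutoff for large $t$ to land in $L^1_tL^2_x(\mathbb R^+)$). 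Setting $v=\mathring u+\varpi$ and $\mathsf W=\bar U+U_r$ as in Subsection \ref{strategy} converts (\ref{Eq-v}) into the abstract system (\ref{Eq-U_r}), which has exactly the form (\ref{Eq-Normal}) on $\mathcal H=L^2_{\rm div}$, $\mathcal V=W^{1,p_0}\cap L^q$ and $\mathcal V'=\mathcal H\cap\mathcal V\cap\dot H^{\alpha/2\vee1}$, with $\widetilde U=(H-1)\bar U+H\mathring U$ and $\widetilde F$ as displayed.

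Conditions \textbf{(A1)} and \textbf{(A3)} are then essentially free. Indeed, \textbf{(A3)} is Proposition \ref{prop-3.1}(1) together with Remark \ref{remark-eta-V}, whose eigenfunction $\eta_0$ is bootstrapped into $\mathcal V'$ using that $e^{\tau\mathbf L}$ smooths $\mathcal H$ into $\bigcap_N H^N$. For \textbf{(A1)} I take $a_0=a$ and $a_1$ slightly above $a$: the first bound is the growth-bound statement $w_0(\mathbf L)=a$, while the second, $\sup_{\tau\in(0,t_0)}\tau^{\gamma_0}\|e^{\tau\mathbf L}\|_{\mathcal H\to\mathcal V'}<\infty$, follows from Proposition \ref{prop-3.1}(3) combined with the estimate $\|e^{\tau\mathbf L}\|_{\mathcal H\to\dot H^{\alpha/2\vee1}}\lesssim\tau^{-(\alpha/2\vee1)/\alpha}$ of Proposition \ref{prop-regularity of e^L}.

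The heart of the argument — and the main obstacle — is verifying \textbf{(A2)}, i.e.\ that $\widetilde U$ and $\widetilde F$ decay exponentially as $\tau\to-\infty$. For the noise factor $H(\tau)=e^{\gamma B_{e^\tau}-\frac12\gamma^2e^\tau}$, pathwise continuity (quantitatively, the law of the iterated logarithm) yields $|H(\tau)-1|+|H(\tau)^{-1}-1|\lesssim e^{r\tau}$ for every $r<\tfrac12$, almost surely. For the Stokes part the decisive point is that, in similarity variables, $\mathring U(\tau)=t^{1-1/\alpha}\mathring u(t)$ obeys $\|\mathring U(\tau)\|_{W^{1,p_0}\cap L^q}\lesssim e^{\varepsilon_\circ\tau}$ with $\varepsilon_\circ:=\tfrac{\alpha-1-d/p_0}{\alpha}>0$, where positivity of $\varepsilon_\circ$ is exactly the hypothesis $p_0>\tfrac{d}{\alpha-1}$; this uses the gradient and $L^{p_0}\to L^q$ smoothing of $e^{t\Lambda^\alpha}$ (Lemma \ref{lemma-heatsemigroup}) together with (\ref{26}). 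A parallel scaling computation shows $\|f_1(t)\|_{L^2}=O\big(t^{(2+d-4\alpha)/(2\alpha)+\varepsilon_0}\big)$ becomes $\|F_1(\tau)\|_{\mathcal H}\lesssim e^{\varepsilon_0\tau}$, so the perturbation contributes precisely decay rate $\varepsilon_0$. Decomposing $\widetilde F=H^{-1}F_1+(H^{-1}-1)F_0-(H-1)\mathbf P(\bar U\cdot\nabla\bar U)-H\mathbf P(\mathring U\cdot\nabla\bar U+\bar U\cdot\nabla\mathring U)-H\mathbf P(\mathring U\cdot\nabla\mathring U)$ and estimating term by term — the $\bar U$-only terms decay like $H-1$; the mixed and quadratic Stokes terms decay like $e^{\varepsilon_\circ\tau}$ and $e^{2\varepsilon_\circ\tau}$, exploiting that $\bar U,\nabla\bar U$ are compactly supported so that local $L^2$ norms are controlled by $L^{p_0}$ — gives $\|\widetilde U(\tau)\|_{\mathcal V}\lesssim e^{\delta\tau}$ and $\|\widetilde F(\tau)\|_{\mathcal H}\lesssim e^{\delta\tau}$ with $\delta:=\min(r,\varepsilon_\circ,\varepsilon_0)>0$. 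Choosing $A$ (hence $a=a_0$), $a_1>a_0$ and $\varepsilon>0$ small enough that $a_1+\varepsilon<\delta$ makes \textbf{(A2)} hold; this is where $\varepsilon_0=\varepsilon_0(f_0)$ is fixed as the admissible threshold, any value with $a_1<\varepsilon_0$ being sufficient.

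With \textbf{(A1)}--\textbf{(A3)} verified, Proposition \ref{prop-2.1} produces two solutions $U_r$ and $U_l+U_p$ of (\ref{Eq-U_r}) on $(-\infty,T_0)$ that are distinct because $e^{-a_0\tau}U_l(\tau)=\mathrm{Re}\big(e^{ib_0\tau}\eta_0\big)$ is non-vanishing and non-convergent as $\tau\to-\infty$, whereas $e^{-a_0\tau}U_r$ and $e^{-a_0\tau}U_p$ vanish. Undoing $\mathsf W=\bar U+U$, (\ref{tra-2}) and $v=\mathring u+\varpi$ yields processes $v_1\neq v_2$ with stopping time $\rho:=e^{T_0}>0$, and the boundary condition in (\ref{Eq-varpi'}) guarantees $v_i(t)\to u_0$ in $L^2$ as $t\to0$, so both solve (\ref{Eq-v}) with force $f_0+f_1$ and datum $u_0$. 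Finally, since $U_r,U_p\in C\big((-\infty,T_0);\mathcal V'\big)$, parabolic smoothing upgrades them to solutions smooth on every compact subinterval, which legitimizes testing against $v_i$ and yields the energy equality (\ref{Eq-energy-equ-v}); this last step is what places the $v_i$ in Definition \ref{def-1.2} rather than leaving them as mere distributional solutions.
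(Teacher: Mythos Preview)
Your proposal is correct and follows essentially the same route as the paper: reduce to (\ref{Eq-U_r}) via similarity variables, feed Proposition \ref{prop-3.1} into Proposition \ref{prop-2.1} after verifying \textbf{(A2)} through the three decay mechanisms (Brownian increment, Stokes smoothing with exponent $1-\tfrac{p_0+d}{\alpha p_0}$, and the $\varepsilon_0$-hypothesis on $f_1$), and undo the transformations. The paper differs only in bookkeeping: it packages the pathwise control of $H(\tau)-1$ into explicit $\mathscr{F}_t$-stopping times $\rho,\rho'$ (Lemma \ref{lemma-stopping-B_t}) rather than invoking the LIL directly, and it introduces a further stopping time $t_0'$ bounding $\|U_r\|_{\dot H^\alpha}$ to make the $L^2_t\dot H^{\alpha/2}$-membership check explicit, whereas you absorb this into ``parabolic smoothing.'' The one point you leave implicit that the paper treats with some care is that the resulting $\rho=e^{T_0}$ is genuinely an $\mathscr{F}_t$-stopping time (not just a random time), which is needed for Definition \ref{def-1.2}; this follows because $T_0$ in Proposition \ref{prop-2.1} depends on $\omega$ only through suprema of the adapted continuous processes $\widetilde U,\widetilde F,H$.
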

	Theorem \ref{Thm-1} follows directly from Theorem \ref{Thm-4.1} by Proposition \ref{prop-u-v}.
	We only prove Theorem \ref{Thm-4.1} here.
	We will find the following lemma useful.
	\begin{lemma}\label{lemma-stopping-B_t}
		$\rho := \inf \{ t >0: B(t) > t^{\frac{2+d-2\alpha}{4\alpha}} \} $
		is an almost surely positive $\mathscr{F}_t$-stopping time.
	\end{lemma}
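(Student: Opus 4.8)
The plan is to reduce the statement to a single algebraic observation about the boundary exponent and then a short large-deviation estimate. Write $\beta := \frac{2+d-2\alpha}{4\alpha}$, so the boundary $g(t) = t^\beta$ is a continuous function with $g(0)=0$. First I would record the one fact that drives everything: the standing hypothesis $\alpha \in (\frac12+\frac d4,\,1+\frac d2)$ is equivalent to $2+d < 4\alpha$ together with $2\alpha < 2+d$, hence to $\beta \in (0,\tfrac12)$. The upper constraint only ensures $\beta>0$ (so $g$ is a genuine increasing power vanishing at $0$); the crucial part is $\beta < \frac12$, which says that $t^\beta$ decays \emph{strictly slower} than $t^{1/2}$ as $t\downarrow 0$ and therefore sits well above the typical $\sqrt t$–size of Brownian oscillations near the origin. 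This is precisely the regime in which $B$ cannot overtake the boundary immediately.

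The measurability of $\rho$ is the routine half. The process $X_t := B(t) - t^\beta$ is continuous and $\mathscr F_t$–adapted, and $\rho = \inf\{t>0 : X_t > 0\}$ is its first entrance time into the open set $(0,\infty)$. By path continuity, for each $t$ one has $\{\rho < t\} = \bigcup_{s\in\mathbb Q\cap(0,t)}\{X_s>0\}\in\mathscr F_t$, and since the underlying filtration satisfies the usual conditions (in particular right-continuity) this gives $\{\rho\le t\}\in\mathscr F_t$ for all $t$, i.e. $\rho$ is an $\mathscr F_t$–stopping time.

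The substance is positivity, $\mathbb P(\rho>0)=1$, equivalently that almost surely $B(t)\le t^\beta$ for all sufficiently small $t>0$. I would run a Borel–Cantelli argument over the dyadic blocks $I_n := [2^{-(n+1)},2^{-n}]$. Since $g$ is increasing, on $I_n$ the boundary is bounded below by its left value $2^{-(n+1)\beta}$, so the event $A_n := \{\exists\, t\in I_n : B(t)>t^\beta\}$ is contained in $\{\sup_{t\le 2^{-n}} B(t) > 2^{-(n+1)\beta}\}$. The reflection principle, Brownian scaling $B(2^{-n})\overset{d}{=}2^{-n/2}Z$ with $Z\sim N(0,1)$, and the Gaussian tail bound $\mathbb P(Z>x)\le e^{-x^2/2}$ give
\[
\mathbb P(A_n)\le 2\,\mathbb P\big(B(2^{-n})>2^{-(n+1)\beta}\big)=2\,\mathbb P\big(Z> 2^{(\frac12-\beta)n-\beta}\big)\le 2\exp\big(-\tfrac12\,2^{-2\beta}\,4^{(\frac12-\beta)n}\big).
\]
Because $\frac12-\beta>0$, the exponent grows geometrically in $n$, so $\sum_n \mathbb P(A_n)<\infty$. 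Borel–Cantelli then yields that almost surely only finitely many $A_n$ occur, so there is a random $N$ with $B(t)\le t^\beta$ on every $I_n$, $n\ge N$, hence on all of $(0,2^{-N}]$; therefore $\rho\ge 2^{-N}>0$ almost surely.

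The main obstacle is simply to make the comparison between the moving boundary and the Brownian fluctuations uniform over each block rather than pointwise; the dyadic decomposition together with the maximal inequality handles exactly this, and it is here that $\beta<\frac12$ enters essentially — if $\beta\ge\frac12$ the tail probabilities would fail to be summable and in fact $\rho=0$. A slicker but less self-contained alternative is to invoke the local law of the iterated logarithm $\limsup_{t\downarrow0} B(t)/\sqrt{2t\ln\ln(1/t)}=1$ a.s.: since $t^\beta/\sqrt{2t\ln\ln(1/t)}=t^{\beta-\frac12}/\sqrt{2\ln\ln(1/t)}\to\infty$ as $t\downarrow0$ when $\beta<\frac12$, the boundary eventually dominates $B(t)$, giving $\rho>0$ directly.
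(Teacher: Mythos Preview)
Your proof is correct. The paper's approach is precisely the alternative you mention at the end: it invokes the local law of the iterated logarithm directly, noting that $t^{\beta}\ge 2\sqrt{2t\ln\ln(1/t)}$ for small $t$ when $\beta<\tfrac12$, and concludes $\mathbb P(\rho=0)=0$ in one line. Your primary argument via dyadic blocks, the reflection principle, and Borel--Cantelli is more self-contained and quantitative (it gives explicit summable tail bounds rather than appealing to a classical a.s.\ limit theorem), at the cost of a few more lines; conversely the paper's LIL route is shorter but imports a stronger ready-made result. For the stopping-time half, the paper writes $\rho$ as a decreasing limit of the ordinary hitting times $\rho_n=\inf\{t\ge 1/n: B(t)>t^\beta\}$, while you use the standard entrance-time argument with rational sampling and right-continuity of the filtration; both are routine and correct.
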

	\begin{proof}[\textbf{proof}]
		Define $\mathscr{F}_t$-stopping time $\rho_n := \inf \{ t \geq \frac{1}{n}: B(t) > t^{\frac{2+d-2\alpha}{4\alpha}} \}$ and $\rho := \displaystyle\lim_{n\to \infty}\rho_n $.
		Since $ B(t) $ is continuous and $ \rho_{n+1} \leq \rho_n $, it follows that $ \rho $ is also a $\mathscr{F}_t$-stopping time.
		
		By the law of iterated logarithm:
		\[
		\mathbb{P} \left( \limsup_{t \downarrow 0} \frac{B(t)}{\sqrt{2t \ln \ln \frac{1}{t}}} = 1 \right)  =1,
		\]
		and ${t^{\frac{2+d-2\alpha}{4\alpha}} } \geq 2\sqrt{2t \ln \ln \frac{1}{t}}$ when $t$ is small enough,
		it immediately follows that:
		\begin{equation}
			\begin{aligned}
				\mathbb{P} \left(\rho =0\right) \leq \mathbb{P}  \left( \exists \, t_k \downarrow 0, \frac{B(t_k)}{t_k^{\frac{2+d-2\alpha}{4\alpha}} } \geq 1 \right)
				\leq \mathbb{P} \left(  \exists \, t_k \downarrow 0,\frac{B(t_k)}{\sqrt{2t_k \ln \ln \frac{1}{t_k}}} >2 \right)
				=0 .
			\end{aligned}
		\end{equation}
	\end{proof}
	%
	\begin{proof}[\textbf{Proof of Theorem \ref{Thm-4.1}}]
		We follow the idea in the section \ref{strategy}.
		Subsequently, we seek a solution to (\ref{Eq-varpi'}) in the following form:
		\begin{equation}
			\mathsf{W}(\tau,\xi) = \bar{U}(\xi) +  U_r(\tau,\xi).
		\end{equation}
		
		Now we choose some suitable $\bar{U}(\xi)$.
		Define
		\begin{equation}
			\delta_0 = \min \{ \frac{2+d-2\alpha}{4\alpha}, (1- \frac{p_0+d}{\alpha p_0}) , \varepsilon_0 \} > 0.
		\end{equation}
		Let the constant $A$ in Proposition \ref{prop-3.1} be $\frac{\delta_0}{2}$, by Proposition \ref{prop-3.1} there exists $a_0 \in (0, \frac{\delta_0}{2})$ and $\bar{U}(\xi)$ s.t. (1)(2) and (3) in Proposition \ref{prop-3.1} are satisfied.
		
		Since $\bar{U}(\xi)$ is determined, we only need to consider $U_r$.
%
		In the following, we solve:
		\begin{equation}\label{Eq-U_r-1}
			\left\{
			\begin{aligned}
				&\partial_\tau U_r 
				= \mathbf{L}_{\bar{U}}( U_r) - H \mathbf{B}( U_r,\, U_r ) - \widetilde{\mathbf{B}} \big(  U_r,\, \widetilde{U}  \big)
				+ \widetilde{F} ,
				\\ & \nabla_\xi \cdot  U_r =0,
				\\ & \lim_{\tau \to -\infty} e^{\frac{2+d-2\alpha}{2\alpha} \tau } \|U_r(\tau)\|_{L^2}=0,
			\end{aligned}
			\right.
		\end{equation}
		where 
		\begin{equation*}
			\begin{aligned}
				\widetilde{U} &= (H-1)\bar{U}+H\mathring{U},
				\\ \widetilde{F} &= F_r - (H-1) \mathbf{P} (\bar{U} \cdot \nabla \bar{U} ) -H \mathbf{P}(\mathring{U} \cdot \nabla \bar{U} + \bar{U} \cdot \nabla \mathring{U} ) - H \mathbf{P}(\mathring{U} \cdot \nabla\mathring{U}) .
			\end{aligned}
		\end{equation*}
		
		We should use Proposition \ref{prop-2.1} to show the non-uniqueness of (\ref{Eq-U_r-1}) and hence of (\ref{Eq-varpi'}).
		%
		From Proposition \ref{prop-3.1} and remark \ref{remark-eta-V}, we already know that conditions (A1) and (A3) of Proposition \ref{prop-2.1} are satisfied, so it remains only to verify condition (A2).
		
		First we show some estimate on $\mathring{U}$.
		By the coordinate transformation (\ref{tra-1}) (\ref{tra-2}) and standard heat estimate:
		\begin{equation}
			\begin{aligned}
				&\| \mathring{U}(\tau)\|_{L^p_\xi} = e^{(1- \frac{p+d}{\alpha p}) \tau} \|\mathring{u}(t)\|_{L^p_x} \lesssim e^{ (1- \frac{p+d}{\alpha p}) \tau} \|u_0\|_{L^p_x},
				\\ &\| \nabla_\xi \mathring{U}(\tau)\|_{L^p_\xi} = e^{ (1- \frac{d}{\alpha p}) \tau} \|\nabla_x \mathring{u}(t)\|_{L^p_x} 
				\lesssim e^{ (1- \frac{d}{\alpha p}) \tau} e^{- \frac{1}{\alpha} \tau} \|u_0\|_{L^p_x}
				= e^{ (1- \frac{p+d}{\alpha p}) \tau} \|u_0\|_{L^p_x}.
			\end{aligned}			
		\end{equation}
		%
		
		Define $\mathscr{F}_t$-stopping time
		\begin{equation}
			\rho' := \inf \{ | \gamma B_t - \frac{1}{2} \gamma^2 t | \geq 1 \}.
		\end{equation}
		Recall $h(t) =e^ {-\gamma {\rm B}_t +\frac{1}{2}\gamma^2 t}$, $H(\tau) := h(e^\tau)^{-1}$, by $|e^x-1| \leq 3|x| $ for $x \leq 1$, we have the following inequality for any $\tau \leq \ln \rho \wedge \ln \rho'$:
		\begin{equation}
			\begin{aligned}
				|H(\tau) -1| + |H(\tau)^{-1} -1| \leq 6 |-\gamma {\rm B}_{e^\tau} +\frac{1}{2}\gamma^2 e^\tau|
				\leq 6 \gamma e^{\frac{2+d-2\alpha}{4\alpha} \tau} + 4\gamma^2 e^\tau
				\lesssim e^{\frac{2+d-2\alpha}{4\alpha} \tau},
			\end{aligned}
		\end{equation}
		 where $\rho$ is the $\mathscr{F}_t$-stopping time in Lemma \ref{lemma-stopping-B_t}
		
		Since $\delta_0 = \min \{ \frac{2+d-2\alpha}{4\alpha}, (1- \frac{p_0+d}{\alpha p_0}) , \varepsilon_0 \} > 0$, $0< a_0 < \frac{\delta_0}{2}$, we have that (for $\tau \leq \ln \rho \wedge \ln \rho'$):
		\begin{equation}
			\begin{aligned}
				\|(H-1)\bar{U}+H\mathring{U} \|_{\mathcal{V}} 
				&\leq |H-1| \ \|\bar{U}\|_{\mathcal{V}} + H \|\mathring{U}(\tau) \|_{\mathcal{V}} 
				\\ &\lesssim |H-1| + He^{ (1- \frac{p_0+d}{\alpha p_0}) \tau} \|u_0\|_{L^{p_0}_x}
				\\ &\lesssim e^{ \delta_0 \tau},
			\end{aligned}		 
		\end{equation}
		%
		\begin{equation}
			\begin{aligned}
				\|\widetilde{F}\|_{\mathcal{H}} &= \| (H^{-1}-1)F - (H-1) \mathbf{B} (\bar{U},\,  \bar{U} ) -H \widetilde{\mathbf{B}}(\mathring{U} ,\, \bar{U} ) - H \mathbf{B}(\mathring{U},\, \mathring{U}) \|_{\mathcal{H}}
				\\ &\leq |H^{-1}-1| \ \|F_0\|_{\mathcal{H}} + |H^{-1}| \ \|F_1\|_{\mathcal{H}} + |H-1| \ \|\bar{U}\|_{\mathcal{V}}^2 + 2H \|\bar{U}\|_{\mathcal{V}}\|\mathring{U}\|_{\mathcal{V}} + H\|\mathring{U}\|_{\mathcal{V}}^2
				\\ &\lesssim |H^{-1}-1|+ e^{\varepsilon_0 \tau}  + |H-1| +C e^{ (1- \frac{p_0+d}{\alpha p_0}) \tau}
				\\ &\lesssim e^{ (a_1 + \frac{\delta_0}{2}) \tau}.
			\end{aligned}		 
		\end{equation}
		Let $\varepsilon = \frac{\delta_0}{2}$ in condition (A2) Proposition \ref{prop-2.1}, then condition (A2) is satisfied. 
		
		Now we can use Proposition \ref{prop-2.1}, leading to existence of two distinct local solutions, $ U_r $ and $ U_r' $, to equation (\ref{Eq-U_r-1}) on $(-\infty,T_0]$ such that $U_r, U_r' \in C( (-\infty,T_0) ; \mathcal{V}')$, $ \displaystyle\sup_{\tau < T_0,i=1,2}\{ \| e^{-a_0 \tau} U_i(\tau, \xi) \|_{\mathcal{V}'} \} < \infty $ where $$T_0:= \min \left\{  \inf\{ t : \displaystyle\sup_{\tau \leq t} \{ \| \widetilde{U}(\tau,\xi) \|_{\mathcal{V}} + \|e^{-a_1 \tau} \widetilde{F}(\tau,\xi) \|_{\mathcal{H}} + |H(\tau)| \} \leq C_0 \} , \ln \rho, \ln \rho' \right\}.$$
		Correspondingly, this gives rise to two distinct solutions to (\ref{Eq-varpi'}) by $ \mathsf{W}_1 = \bar{U} + U_r $ and $ \mathsf{W}_2 = \bar{U} + U_r' $.
		
		Define $\mathscr{F}$-stopping time
		\begin{equation}
			\begin{aligned}
				t_0 :=& e^{T_0},
				\\ t_0' :=& \inf \{e^{\tau} : \|U_r(\tau)\|_{\dot{H}^\alpha_\xi}+\|U_r'(\tau)\|_{\dot{H}^\alpha_\xi} > 2 \|\bar{U}\|_{\dot{H}^\alpha_\xi} \},
				\\ t_1 :=& t_0 \wedge t_0'.
			\end{aligned}
		\end{equation}
		Then $t_1 \leq \rho \wedge \rho' $ is an almost surely positive $\mathscr{F}_t$-stopping time.
%
%
		%
%
		By appealing to coordinate transformation, we obtain two solutions to (\ref{Eq-v}) on $[0,t_1]$:
		\begin{equation}\label{sca-the}
			v_i(t,x) = e^{(\frac{-1}{\alpha}+1) \tau} \left(\mathsf{W}_i(\tau,\xi) + \mathring{U}(\tau,\xi)\right) ,\ \ \tau = \log t,  \ \xi = \frac{x}{t^{ \frac{1}{\alpha} }}.
		\end{equation}
		
%
%
		
		To this end, we verify Definition \ref{def-1.2} for $v_1,v_2$.
		First, we justify the continuity of solutions. 
		%
		Clearly $ v_1, v_2 \in C((0,t_1); L^2_x) $ since $ \mathsf{W}_i,\mathring{U} \in C((-\infty,T_0); \mathcal{V}') $.
		Actually we have $ v_1, v_2 \in C((0,t_1); H^1_x \cap L^p), \ p \in (2, \frac{2d}{2+d-2\alpha}) $.
		Moreover, by $ \displaystyle\lim_{\tau \to -\infty} e^{\frac{2+d-2\alpha}{2\alpha} \tau } \left(\| U_r\| +  \|U_r'\|+\| \bar{U}\|\right) = 0 $ and $ \mathring{u} \in C([0, t_1); L^2) $, it follows that $ v_1, v_2 \in C([0, t_1); L^2) $.
		
		Next, we show integrability. Recall $t_1 \leq \inf \{e^{\tau} : \|U_r(\tau)\|_{\dot{H}^\alpha_\xi}+\|U_r'(\tau)\|_{\dot{H}^\alpha_\xi} > 2 \|\bar{U}\|_{\dot{H}^\alpha_\xi} \}$, we have:
		\begin{equation}
			\begin{aligned}
				\|{v_i}\|_{L^2( 0,t_1; \dot{H}^\alpha_x ) }
				\leq& \|{\mathring{u}}\|_{L^2( 0,t_1; \dot{H}^\alpha_x ) }
				+   \left(\int_{-\infty}^{\ln{t_1}} \left(\|e^{ \frac{d-2\alpha}{2 \alpha} \tau} \bar{U}\|_{ \dot{H}^\alpha_\xi }  
				+\|e^{ \frac{d-2\alpha}{2 \alpha} \tau}  U_r(\tau)\|_{\dot{H}^\alpha_\xi } + \|e^{ \frac{d-2\alpha}{2 \alpha}  \tau} U_r'(\tau) \|_{\dot{H}^\alpha_\xi } \right)^2 {\rm d}\tau \right)^{\frac{1}{2}}
				\\ \leq& \|u_0\|_{L^2}
				+3  \|{\bar{U}}\|_{\dot{H}^\alpha_\xi} \left(\frac{\alpha}{d-\alpha}\right)^{\frac{1}{2}} t_1^{\frac{d-\alpha}{2\alpha}}
				< \infty,
			\end{aligned}
		\end{equation}
		which means $v_1,v_2 \in L^2_t \dot{H}^\alpha_x$ since $v_i \in C((0,t_1) ; \dot{H}^{\frac{\alpha}{2}}_x)$.
		Since $ \|f(t)\|_{L^2_x(\mathbb{R}^d)} \leq t^{-2 + \frac{1}{\alpha}} \|F(\tau)\|_{L^2_\xi} t^{\frac{d}{2\alpha}} \leq C t^{\frac{2+d -4 \alpha}{2\alpha}} $, it follows that $ \|f\|_{L^1 (0,T;  L^2_x )} < \infty $ for any $T<\infty$. So $v_1,v_2 \in L^2(0,t_1; L^2(\mathbb{R}^d))$, $f \in L^1_tL^2_x$.
		
		Finally, $ v_1,v_2 $ satisfies the energy equality on $ [0,t_1] $ obviously, since $ v_i \in C\big((0,t_1); H^{1}_x\big) $.
%
%
%
%
%
	\end{proof}

	\subsection{Proof of Theorem \ref{Thm-2}}
	\begin{proof}[\textbf{proof}]
		We follow the proof of Theorem \ref{Thm-1} to get one solution firstly.
		Let ${v} := e^ {\gamma {\rm B}_t - \frac{1}{2}\gamma^2 t} {u}$, so
		(\ref{FNSE_0}) can be rearranged  as (\ref{Eq-v}). By Proposition \ref{prop-u-v} we only need to prove the uniqueness of (\ref{Eq-v}).
		Let $ \mathring{u} $ be the solution to the following heat equation:
		\begin{equation}
			\left\{
			\begin{aligned}
				&\partial_t \mathring{u} + \Lambda^\alpha \mathring{u}= 0,
				\\ &\nabla \cdot \mathring{u} = 0,
				\\ &\mathring{u}|_{t=0} = u_0.
			\end{aligned}
			\right.
		\end{equation}
		We also seek a solution to (\ref{Eq-v}) of the following form:
		\begin{equation}
			v(t,x) = \mathring{u}(t,x) + u_r(t,x).
		\end{equation}
		
		Applying the coordinate transformation (\ref{tra-1}) (\ref{tra-2}), we are led to equations:
%
		\begin{equation}\label{NS_2}
			\left\{
			\begin{aligned}
				&\partial_\tau U_r -\frac{\alpha-1}{\alpha} U_r - \frac{1}{\alpha} \xi \cdot \nabla_\xi U_r + H \mathbf{P}(U_r \cdot \nabla U_r + \mathring{U} \cdot \nabla U_r +U_r \cdot \nabla \mathring{U}) 
				\\ &= \Lambda^\alpha U_r + F - H \mathbf{P}(\mathring{U} \cdot \nabla\mathring{U}) ,
				\\ & \nabla_\xi \cdot U_r =0,
				\\ & \lim_{\tau \to -\infty} e^{\frac{2+d-2\alpha}{2\alpha} \tau } \|U_r(\tau)\|_{L^2}=0,
			\end{aligned}
			\right.
		\end{equation}
		where $ \displaystyle\lim_{\tau \to -\infty}\| e^{\varepsilon \tau} F(\tau) \|_{L^2} =0$.
		
		Let $ H := L^2_{\rm div}$, $ V := W^{1,p_0} \cap L^q , \ \mathcal{V}' := H \cap V \cap \dot{H}^{\frac{\alpha}{2} \vee 1} $
		where $ \frac{1}{q} + \frac{1}{p_0 }= \frac{1}{2} $ (the spaces here are the same as those in proof of Theorem \ref{Thm-1}).
		By Proposition \ref{lemma-regularity-e^tP} we know the condition (A1) in the Proposition \ref{prop-2.1} is satisfied for any $a_0 >0$.
		
		Let $a_1 = \frac{\varepsilon}{2}$, $a_0 := \frac{\varepsilon}{4}$, where $\varepsilon$ is the constant in Theorem \ref{Thm-2}.
		Then by the Proposition \ref{prop-2.1}, (\ref{NS_2}) has a solution $ U \in C_\tau \mathcal{V}'_\xi $ such that $ \| U(\tau) \|_{\mathcal{V}'} \lesssim e^{\frac{1}{2} \varepsilon \tau} $,
		so (\ref{Eq-v}) has a solution $ v \in L^\infty(0,T ; L^2_x) \cap L^2(0,T ; H^{\frac{\alpha}{2}}_x)\  \mathbb{P}$-a.s. and $v$ satisfy following energy equality:
		\begin{equation}\label{Eq-uni-energy-equ-v}
			\mathbb{E} \|v(t \wedge \rho) \|_{L^2}^2 - \|u_0\|_{L^2}^2  + 2 \mathbb{E} \int_{0}^{t \wedge \rho} \|v(s)\|_{\dot{H}^{\frac{\alpha}{2}}}^2 {\rm d}s 
			= 2 \mathbb{E} \int_{0}^{t \wedge \rho} \langle h(t) f,v \rangle {\rm d}s .
		\end{equation}
		
		Moreover, since $ \| \nabla_x u(t) \|_{\dot{H}^1_x} = t^{\frac{d-2\alpha}{2\alpha}} \| \nabla_\xi U(\ln t) \|_{\dot{H}^1_\xi}  $, we can define stopping time $\rho'$ such that $ \| \nabla_x u(t) \|_{\dot{H}^1_x} \leq t^{\frac{d-2\alpha}{2\alpha} + \frac{\varepsilon}{4}} $ and $\mathbb{P} \left(\rho' >0\right) =1$.
		
		
%
%
		
		Note that by H\"{o}lder's inequality and Young's inequality, we havet
		\begin{equation}
			\begin{aligned}
				&| \langle v \cdot \nabla u, w \rangle | \leq \|v\|_{L^4} \|w\|_{L^4} \|\nabla u\|_{L^2}
				\leq C \|v\|_{H^{\frac{d}{4}}} \|w\|_{H^{\frac{d}{4}}} \|\nabla u\|_{L^2}
				\\ \leq& C \|v\|_{H^{\frac{\alpha}{2}}}^{\frac{d}{2\alpha}} \|v\|_{L^2}^{\frac{2\alpha-d}{2\alpha}} \|w\|_{H^{\frac{\alpha}{2}}}^{\frac{d}{2\alpha}} \|w\|_{L^2}^{\frac{2\alpha-d}{2\alpha}}  \|\nabla u\|_{L^2}
				\leq \|v\|_{H^{\frac{\alpha}{2}}}\|w\|_{H^{\frac{\alpha}{2}}} + C \|v\|_{L^2} \|w\|_{L^2}  \|\nabla u\|_{L^2}^{\frac{2\alpha}{2\alpha-d}}.
			\end{aligned}
		\end{equation}
		Then for the solution $v$ we get form Proposition \ref{prop-2.1} and any solution $v'$ to (\ref{Eq-v}) we have:
		\begin{equation}
			\begin{aligned}
				\int_{0}^{t \wedge \rho'} |\langle v(s)\cdot \nabla v(s), v'(s) \rangle| {\rm d}s 
				\lesssim \|v\|_{L^2_t H_x^{\frac{\alpha}{2}}}^2 + \|v'\|_{L^2_t H_x^{\frac{\alpha}{2}}}^2
				+ \|v\|_{L^\infty L^2_x}\|v'\|_{L^\infty L^2_x} \|\nabla v\|_{L^{\frac{2\alpha}{2\alpha -d}}_t L^2_x}^{\frac{2\alpha-d}{2\alpha}} < \infty,
				\\ \int_{0}^{t \wedge \rho'} | \langle v'(s)\cdot \nabla v'(s), v(s) \rangle | {\rm d}s 
				=  \int_{0}^{t \wedge \rho'} | \langle v'_i(s) v_j'(s) \partial_i v_j(s) \rangle | {\rm d}s 
				\lesssim \|v'\|_{H^{\frac{\alpha}{2}}}^{2} + \|v'\|_{L^2}^{2}  \|\nabla v\|_{L^2}^{\frac{2\alpha}{2\alpha-d}}
				< \infty.
			\end{aligned}			 
		\end{equation}
		It is not hard to verify that the regularities of $v$ and $v'$ ensure the validity of the following equalities (in almost surely sense):
		\begin{equation}\label{Eq-er-v,v'}
			\begin{aligned}
				\\ \langle v(t \wedge \rho'),v'(t \wedge \rho') \rangle -  \langle u_0,u_0 \rangle
				=& \int_{0}^{t \wedge \rho'} \langle v(s), \partial_s v'(s) \rangle {\rm d}s
				+ \int_{0}^{t \wedge \rho'} h(s)^{-1} \langle v_i(s) v_j(s), \partial_i v'_j \rangle {\rm d}s
				\\ &+ \int_{0}^{t \wedge \rho'} \langle v(s),\Lambda^\alpha v'(s) \rangle {\rm d}s 
				+ \int_{0}^{t \wedge \rho'} \langle h(s)f(s),v'(s) \rangle {\rm d}s,
			\end{aligned}
		\end{equation}
		\begin{equation}\label{Eq-er-v',v}
			\begin{aligned}
				\\ \langle v'(t \wedge \rho'),v(t \wedge \rho') \rangle -  \langle u_0,u_0 \rangle
				=& \int_{0}^{t \wedge \rho'} \langle  v'(s), \partial_s v(s) \rangle {\rm d}s
				+ \int_{0}^{t \wedge \rho'} h(s)^{-1} \langle v'_i(s) v'_j(s), \partial_i v_j \rangle {\rm d}s
				\\ &+ \int_{0}^{t \wedge \rho'} \langle v'(s),\Lambda^\alpha v(s) \rangle {\rm d}s 
				+ \int_{0}^{t \wedge \rho'} \langle h(s)f(s),v(s) \rangle {\rm d}s.
			\end{aligned}
		\end{equation}
		
		Let $w := v-v'$. Add (\ref{Eq-er-v,v'}) and (\ref{Eq-er-v',v}), by $v$ satisfy energy equality (\ref{Eq-uni-energy-equ-v}) and $ v'$ satisfy energy inequality (\ref{Eq-energy-ine-v}), we obtain that:
		\begin{equation}\label{Eq-energy-w}
			\begin{aligned}
				\mathbb{E} \|w(t \wedge \rho') \|^2 =& \mathbb{E} \|v(t \wedge \rho') \|^2 + \mathbb{E} \|v'(t \wedge \rho') \|^2 -2 \mathbb{E} \langle v'(t \wedge \rho'),v(t \wedge \rho') \rangle
				\\ =& \mathbb{E} \|v(t \wedge \rho') \|^2 + \mathbb{E} \|v'(t \wedge \rho') \|^2 
				-2 \|u_0\|^2 - \mathbb{E} \int_{0}^{t \wedge \rho'} h(s)^{-1} \langle w(s) \cdot \nabla w(s), u(s) \rangle {\rm d}s
				\\ &+ \mathbb{E} \int_{0}^{t \wedge \rho'} \left( \|v(s)\|_{\dot{H}^{\frac{\alpha}{2}}}^2 + \|v'(s)\|_{\dot{H}^{\frac{\alpha}{2}}}^2 -\|w(s)\|_{\dot{H}^{\frac{\alpha}{2}}}^2 \right) {\rm d}s 
				- E\int_{0}^{t \wedge \rho'} \langle h(s)f(s),v(s)+v'(s) \rangle {\rm d}s 
				\\ \leq& - \mathbb{E} \int_{0}^{t \wedge \rho'} h(s)^{-1} \langle w(s) \cdot \nabla w(s), u(s) \rangle {\rm d}s
				- \mathbb{E} \int_{0}^{t \wedge \rho'}  \|w(s)\|_{\dot{H}^{\frac{\alpha}{2}}}^2  {\rm d}s
				\\ \leq& \mathbb{E} \int_{0}^{t \wedge \rho'} \left(\|w(s)\|_{H^{\frac{\alpha}{2}}}^{2}
				+ C \|w(s)\|_{L^2}^{2}  \|\nabla v(s)\|_{L^2}^{\frac{2\alpha}{2\alpha-d}} 
				- \|w(s)\|_{H^{\frac{\alpha}{2}}}^{2}\right) {\rm d}s
				\\ \leq& C \sup_{s<t} \mathbb{E} \|w(s \wedge \rho')\|_{ L^2_x } 
				\int_{0}^{t} \mathop{\mathrm{ess~sup}}_{\omega \in \Omega} 
				\left\{ \|v(s \wedge \rho')\|_{\dot{H}^1}^{\frac{2\alpha}{2\alpha-d}}  \right\} {\rm d}s.
				\\ \leq& C \sup_{s<t} \mathbb{E} \|w(s \wedge \rho')\|_{ L^2_x } \frac{2(2\alpha -d)}{\varepsilon \alpha} t^{\frac{\varepsilon \alpha}{2(2\alpha -d)}}.
			\end{aligned}
		\end{equation}
		Since $\frac{\varepsilon \alpha}{2(2\alpha -d)} > 0$, in (\ref{Eq-energy-w}) we can choose $t_1 >0$ small enough such that $C\frac{2(2\alpha -d)}{\varepsilon \alpha} t_1^{\frac{\varepsilon \alpha}{2(2\alpha -d)}}<1$. Then we have
		\begin{equation}
			\sup_{s<t_1} \mathbb{E} \|w(s \wedge \rho')\|_{ L^2_x } \leq \left(C \frac{2(2\alpha -d)}{\varepsilon \alpha} t_1^{\frac{\varepsilon \alpha}{2(2\alpha -d)}} \right) \sup_{s<t_1} \mathbb{E} \|w(s \wedge \rho')\|_{ L^2_x },
		\end{equation}
		which is only possible when $\sup_{s<t_1} \mathbb{E} \|w(s \wedge \rho')\|_{ L^2_x } =0$.
		This shows that the solution is unique up to $\rho' \wedge t_1$.
%
	\end{proof}

	\subsection{Proof of Theorem \ref{Thm-1.1}}
%
	\begin{proof}[\textbf{proof}]
		We aim to show the non-uniqueness of equation (\ref{Eq-varpi'}) for $\mathring{U} =0$, i.e.
		\begin{equation}\label{Eq-varpi''}
			\begin{aligned}
				\partial_\tau U   =\mathbf{P}_\alpha U - H\mathbf{B}(U,U) + H^{-1}F.
			\end{aligned}
		\end{equation}
		We choose $\bar{U} = \bar{U}(\xi)$ by Proposition \ref{prop-3.1} with any $A>0$ and consider following linear equation for $U_\ell$:
		\begin{equation}\label{eq-th2-l}
			\begin{aligned}
				\partial_\tau U_\ell   =\mathbf{L}_{\bar{U}} U_\ell - (H-1) \widetilde{\mathbf{B}}(\bar{U},\, U_\ell)  .
			\end{aligned}
		\end{equation}
		%
		Applying Proposition \ref{prop-2.1} yields a non-trivial local solution to (\ref{eq-th2-l}) such that $\displaystyle\lim_{\tau \to -\infty} e^{\frac{2+d-2\alpha}{2\alpha} \tau } \|{U_\ell}(\tau)\|_{H^1}=0 $. It is clear that the solution can be uniquely extended so that $ U_\ell \in C(( -\infty,\infty); H^{1} ) $ as equation (\ref{eq-th2-l}) is homogeneous.
		Actually we can find a $U_\ell$ satisfy $ U_\ell \in C(( -\infty,\infty); H^{N} ) $ for any $N >0$, since $\bar{U} \in C^\infty_c$, $\mathbf{L}_{\bar{U}}$ generate a semigroup on $H^N$ and $\|e^{t\mathbf{L}_{\bar{U}}}\|_{H^N\to H^{N+1}} \leq C t^{\frac{1}{\alpha}}$ by Proposition \ref{prop-exist-semigroup} and Remark \ref{re-e^L-H^N}.
		
		
		%
%
		%
		Then it can be verified that $U_1 := \bar{U} - \frac{1}{2} U_\ell $ satisfies (\ref{Eq-varpi''}) with
		\begin{equation}
			\begin{aligned}
				F :=& H \Big( \partial_\tau U_1 - \mathbf{P}_\alpha U_1 + H \mathbf{B}(U_1,\, U_1)\Big)
				\\ =& H \left(-\frac{1}{2}\mathbf{L}_{\bar{U}} U_\ell + \frac{1}{2}(H-1) \widetilde{\mathbf{B}}(\bar{U},\, U_\ell) - \mathbf{P}_\alpha U_1 + H \mathbf{B}(U_1,\, U_1) \right) 
				\\ =& H \left(- \mathbf{P}_\alpha \bar{U} + H \mathbf{B}(\bar{U},\, \bar{U}) + \frac{1}{4} H \mathbf{B}(U_\ell,\, U_\ell)\right).
			\end{aligned}
		\end{equation}
%
		To this end one can see $U_2 := U_1 + U_\ell$ is another solution of (\ref{Eq-varpi''}):
		\begin{equation}\label{check}
			\begin{aligned}
				&\mathbf{P}_\alpha U_2 - H \mathbf{B}(U_2,\, U_2) + F
				\\ =& \mathbf{P}_\alpha U_1 +\mathbf{P}_\alpha U_\ell - H\mathbf{B}(U_1,\, U_1) - H \widetilde{\mathbf{B}}(U_1,\, U_\ell) - H \mathbf{B}(U_\ell,\, U_\ell)
				\\ &+\partial_\tau U_1 - \mathbf{P}_\alpha U_1 + H  \mathbf{B}(U_1,\, U_1) 
				\\ =& \partial_\tau U_1 + \mathbf{P}_\alpha U_\ell - H \widetilde{\mathbf{B}}(U_1,\, U_\ell) - H \mathbf{B}(U_\ell,\, U_\ell)
				\\ =& \partial_\tau U_1 +\partial_\tau U_\ell
				\\ =& \partial_\tau U_2.
			\end{aligned}
		\end{equation}
%
		
		Reversing the coordinate transformation (\ref{tra-1}) (\ref{tra-2}) yields two distinct global solutions $v_1,v_2$ to Equation (\ref{Eq-v}). By It\^{o}'s formula we get two solutions $u_1,u_2$ to (\ref{FNSE_0}) with same stochastic force $f$.
		Note that $ F \in C(( -\infty,\infty);L^2) $ by $\bar{U} \in C^\infty_c (\mathbb{R}^d),\ U_\ell \in C(( -\infty,\infty); H^{1} ) $ and $ \displaystyle\lim_{\tau \to -\infty} \|F(\tau)\|_L^2 <\infty$ by $\displaystyle\lim_{\tau \to -\infty} e^{\frac{2+d-2\alpha}{2\alpha} \tau } \|{U_\ell}(\tau)\|_{H^1}=0 $, then we have $f\in L^1( 0,T; L^2(\mathbb{R}^d)) \ \ \mathbb{P} -{\rm a.s.}$ for all $T <\infty$.
		It is not hard to check $u_i \in C_t L^2_x \cap L^2_t H^{\frac{\alpha}{2} }_x ((0,\infty) \times \mathbb{R}^d ) \  \mathbb{P}$-a.s. and $u_i$ satisfy energy equality (this is very similar as in the proof of Theorem \ref{Thm-1}).
	\end{proof}
	\begin{remark}
		The above procedure can be transfer smoothly onto 2D Euler equation, yielding a simpler proof for non-uniqueness than in \cite{Brue-Euler}. Seeing Appendix 4.2.
	\end{remark}
	
	\section{Appendix}
	\subsection{Proof of Proposition \ref{prop2.2}}\label{appendix-1}
	\begin{proof}[\textbf{proof}]
		We will divide the proof into three steps.
		
		\noindent{\bf Step 1:}
		Let conditions (2) and (3) in Proposition \ref{prop2.2} be satisfied, then $R(z, \mathbf{S} + \beta \mathbf{P})\eta \longrightarrow R(z, \mathbf{S} )\eta $ for all $\eta \in H$ and $z \in \mathbb{C}$ with $\operatorname{Re}(z) > b$.
		
		Consider the equation:
		\begin{equation}\label{L-E-2-1}
			\left\{
			\begin{aligned}
				&\partial_t U_\beta = \mathbf{S}U_\beta + {\beta} \mathbf{P} U_\beta,
				\\ &U_\beta|_{t=0} = \eta,
			\end{aligned}
			\right.
		\end{equation}
		where $\eta \in V$. 
		Let $U_\beta \in L_t^\infty L_x^2$ be the unique solution of equation (\ref{L-E-2-1})
		
		By energy estimate:
		\begin{equation}
			\partial_t \frac{1}{2} \|U_\beta\|^2 =  \langle {\beta}\mathbf{P}+\mathbf{S} U_\beta,U_\beta \rangle 
			\leq b \|U_\beta \|^2.
		\end{equation}
		Thus, $\|U_\beta (t) \|_{H} \leq e^{b t}\|U_0\|_{H}$. Therefore, the integral $
		\int_{0}^{\infty} e^{-t z} U_\beta (t, \cdot) \, {\rm d}t$ is well-defined when $\operatorname{Re} \, z > b$. Consequently, $R(z, \beta \mathbf{P} + \mathbf{S})U_0 = \int_{0}^{\infty} e^{-t z} U_\beta (t, \cdot) \, {\rm d}t$ exists.
		
		Next, we estimate the difference between the solutions of the two evolution equations. Let $U_0$ be the solution of equation (\ref{L-E-2-1}) when $\beta = 0$, we get that:
		\begin{equation}
			\begin{aligned}
				\partial_t \frac{1}{2} \|U_\beta -U_0 \|^2 
				&= \langle (\beta \mathbf{P} + \mathbf{S}) U_\beta -\mathbf{S} U_0 , U_\beta - U_0 \rangle 
				\\ &= \langle (\beta \mathbf{P} + \mathbf{S}) (U_\beta - U_0) , U_\beta - U_0 \rangle + \langle \beta \mathbf{P} U_0 , U_\beta - U_0 \rangle
				\\ &\leq b \|U_\beta - U_0\|^2
				+ \|\beta \mathbf{P} U_0 \| \ \| U_\beta - U_0 \|.
			\end{aligned}
		\end{equation}
		Then by Gronwall's inequality and condition (3) we have:
		\begin{equation}
			\begin{aligned}
				\|U_\beta(t)  -U_0(t)  \|
				\leq  {\beta}  e^{ b t} \int_{0}^{t} \| \mathbf{P}U_0(s)\| {\rm d}s 
				\leq {\beta}  e^{ b t} t \sup_{s\leq t}\| \mathbf{P}U_0(s)\|.
			\end{aligned}
		\end{equation}
		Thus, as $\beta \rightarrow 0$, we have $\displaystyle\sup_{t < T} \|U_\beta(t) - U_0(t)\| \to 0$ uniformly with respect to $T$.
		
		Note that
		\begin{equation}
			\begin{aligned}
				&\| \big(R(z,\beta \mathbf{P} +\mathbf{S}) - R(z, \mathbf{S}) \big)\eta_0 \| 
				\\ =& \left\| \int_{0}^{\infty} e^{-tz}  [U_\beta (t) -  U_0 (t)] {\rm d}t \right\| 
				\\ \leq& \int_{0}^{\infty} e^{-tz}  \| [U_\beta (t) -  U_0 (t)] \| {\rm d}t 
				\\ \leq& \int_{0}^{T}e^{-tz}  \| [U_\beta (t,\cdot) -  U_0 (t,\cdot)] \| {\rm d}t  + \int_{T}^{\infty}e^{-tz}  \| [U_\beta (t,\cdot) -  U_0 (t,\cdot)] \| {\rm d}t
				\\ \leq& \int_{0}^{T} e^{-z t+ b t } \beta \|\mathbf{P} U_0(t)\| {\rm d}t 
				+ \int_{T}^{\infty} e^{-z t} 2e^{b t} \|\eta\| {\rm d}t
				\\ \leq& \beta T \sup_{t\leq T}\|\mathbf{P} U_0(t)\| + Ce^{(b-z) T } \|U_0\|.
			\end{aligned}
		\end{equation}
		Let $T \to \infty$ first and $\beta \to 0$, we get that $R(z, \mathbf{S} + \beta \mathbf{P})\eta \longrightarrow R(z, \mathbf{S})\eta $ for any $\eta \in V$ with compact support.
		Note that $\| U_\beta (t)\| \leq e^{ bt }  \|\eta\|$, so $\displaystyle\sup_{\beta \in [0,1]} \|R(z, \beta \mathbf{P} + \mathbf{S})\| <\infty$ has a uniform bound. 
		Thus, for any $\eta \in H$, there exists $\eta_n \in V$ with $\eta_n \to \eta$ in $H$, we have $R(z, \beta \mathbf{P} + \mathbf{S})\eta_n \to R(z, \beta \mathbf{P} + \mathbf{S})\eta$, and the convergence is uniform with respect to $\beta$. Therefore, we also have $R(z, \mathbf{S} + \beta \mathbf{P})\eta \longrightarrow R(z, \mathbf{S})\eta$
		for all $\eta \in H$.

		\noindent {\bf Step 2:}
		Let all the conditions in Proposition \ref{prop2.2} be satisfied and $z \in \rho(\mathbf{L}_0): Re(z) > b $, then
		$R(z, \mathbf{L}_\beta)$ strongly converges to $R(z, \mathbf{L})$.
		
		Note that $ R(z, \mathbf{L}_\beta) = (1 - R(z, \mathbf{S} + \beta \mathbf{P})\mathbf{K})^{-1} R(z, \mathbf{S} + \beta \mathbf{P}) $. 
		By the Step 1 we know $R(z, \beta \mathbf{P} + \mathbf{S})\eta_0 \to R(z, \mathbf{S})\eta_0$ for any $\eta_0 \in H$. Now we consider $(1 - R(z, \mathbf{S} + \beta \mathbf{P})\mathbf{K})^{-1}$.
		
		We first show that $R(z, \beta \mathbf{P} + \mathbf{S})\mathbf{K} \to R(z, \mathbf{S})\mathbf{K}$ in the operator norm. In fact, the compact operator $\mathbf{K}$ can be approximated by a sequence of finite-dimensional operators $\mathbf{K}_n$. Since the strong convergence of finite-dimensional operators is equivalent to convergence in operator norm, $R(z, \beta \mathbf{P} + \mathbf{S})\mathbf{K}_n \to R(z, \mathbf{S})\mathbf{K}_n$ in the operator norm as well. Furthermore, since $\displaystyle\sup_{\beta \in \mathbb{R}} \|R(z, \beta \mathbf{P} + \mathbf{S})\| <\infty$, we get $R(z, \beta \mathbf{P} + \mathbf{S})\mathbf{K} \to R(z, \mathbf{S})\mathbf{K}$ in the operator norm.
		
		Moreover, note that the kernel of $1 - R(z, \mathbf{S}) \mathbf{K}$ is trivial for any $z \notin \operatorname{spec}\{\mathbf{L}\}$. Since $1 - R(z, \mathbf{S}) \mathbf{K}$ is a compact perturbation of the identity operator, it is a Fredholm operator, and thus $1 - R(z, \mathbf{S}) \mathbf{K}$ is invertible.
		It follows that $1 - R(z, \beta \mathbf{P} + \mathbf{S}) \mathbf{K}$ is also invertible when $\beta$ is big enough since $\|R(z,\beta \mathbf{P} + \mathbf{S} )\mathbf{K} -R(z,\mathbf{S} ) \mathbf{K}\| \to 0$. So $1-R(z, \beta \mathbf{P} + \mathbf{S})\mathbf{K} \to 1- R(z, \mathbf{S})\mathbf{K}$ in operator norm.
		
		Combining the convergence of the two parts, we conclude that $R(z, \mathbf{L}_\beta)$ strongly converges to $R(z, \mathbf{L})$.
		
		\noindent {\bf Step 3:}
		
		Now, let us prove Proposition \ref{prop2.2}.
		
		Let $\Gamma \subset \mathbb{R}^+$ be a simple closed curve that encloses $z_0$, $\Gamma \cap \sigma(\mathbf{L}) = \emptyset$ and $\mathbf{L}_0 \eta_0 = z_0 \eta_0$.
		By Step 1 and Step 2 we know $R(\lambda,\mathbf{L}_\beta) \to R(\lambda,\mathbf{L}) $ for any $\lambda \in \Gamma$. Moreover, since $\Gamma$ is a compact set, we have
		\begin{equation}
			\frac{1}{2\pi \text{i}} \int_\Gamma R(\lambda,\mathbf{L}_\beta) \eta_0 d\lambda
			\to \frac{1}{2\pi \text{i}} \int_\Gamma R(\lambda,\mathbf{L}_0) \eta_0 d\lambda  = \eta_0.
		\end{equation}
		So there exists $\beta_0 > 0$ such that $\frac{1}{2\pi \text{i}} \int_\Gamma R(\lambda,\mathbf{L}_{\beta_0}) d\lambda \eta_0 \neq 0$, which means there exists $z_{\beta_0} \in \sigma ( \mathbf{L}_{\beta_0} )$ and $ z_{\beta_0}$ is inside the curve $\gamma$.
		Now $ w_{ess}(\mathbf{L}_{\beta_0}) < 0 < \text{Re}(z_{\beta_0}) $. By Lemma \ref{lemma2.3} we get Proposition \ref{prop2.2} immediately.
	\end{proof}
	\subsection{2D Euler/fractional NS equations}
	We consider 2D Euler/fractional NS equations here:
	\begin{equation}\label{2D}
		\left\{
		\begin{aligned}
			&{\rm d} u + \mathbf{P}(  u\cdot \nabla u) {\rm d}t = \nu \Lambda^\alpha u {\rm d}t + f{\rm d}t,
			\\ &\nabla \cdot u = 0,
			\\ &u|_{t=0} = 0.
		\end{aligned}
		\right.	
	\end{equation}
	We have the following non-uniqueness theorem:
	\begin{theorem}\label{Thm-2D}
		Let $\alpha \in [0,1]$ and $\nu \geq 0$, there exist $f\in L^1_tL^2_x$ such that the equation (\ref{2D}) has two different weak solutions $u_1$ and $u_2$.
	\end{theorem}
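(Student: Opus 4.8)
The plan is to transplant the self-similar construction behind Theorem \ref{Thm-1.1} to the present deterministic, zero-datum setting, where it is in fact lighter: equation (\ref{2D}) carries no noise, so the multiplier is identically $1$, and $u_0=0$ forces the Stokes corrector to vanish, $\mathring U\equiv0$. First I would pass to self-similar variables through the $\alpha=1$ instance of (\ref{tra-1})--(\ref{tra-2}), namely $\xi=x/t$, $\tau=\log t$, $U(\xi,\tau)=u(x,t)$, under which the transport terms are exactly scale-covariant. The fractional dissipation $\nu\Lambda^\alpha_x$ then transforms into $\nu e^{(1-\alpha)\tau}\Lambda^\alpha_\xi$; since $\alpha\le1$ this coefficient stays bounded for $\tau\le0$ and decays to $0$ as $\tau\to-\infty$. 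Writing $\mathbf S_0 U:=\xi\cdot\nabla_\xi U$ and $\mathcal L(\tau):=\mathbf S_0+\nu e^{(1-\alpha)\tau}\Lambda^\alpha_\xi$, the equation takes the form of (\ref{Eq-varpi''}), namely $\partial_\tau U=\mathcal L(\tau)U-\mathbf B(U,U)+F$ with $F$ the rescaled force, and it remains only to exhibit two solutions sharing a single $F$.

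Second, I would fix an unstable background. In $d=2$ the self-similar Euler linearisation $\mathbf L^{E}_{\bar U}:=\mathbf S_0+\widetilde{\mathbf B}(\bar U,\cdot)$ admits, for a suitable $\bar U\in C^\infty_c(\mathbb R^2)$, an eigenvalue $z_0$ with $\mathrm{Re}(z_0)=a_0>0$ and eigenvector $\eta_0$; this is precisely the profile of Proposition 2.4.2 in \cite{Brue-Euler}, the very ingredient already used to establish Proposition \ref{prop-3.1} when $d=2$. I would then produce a nontrivial global solution $U_\ell$ of the homogeneous linearised equation $\partial_\tau U_\ell=\mathbf L_{\bar U}(\tau)U_\ell$, with $\mathbf L_{\bar U}(\tau):=\mathcal L(\tau)+\widetilde{\mathbf B}(\bar U,\cdot)$ as in (\ref{Define-L_U^-}), satisfying $\sup_{\tau\le0}\big(e^{-a_0\tau}\|U_\ell(\tau)\|\big)<\infty$ and $U_\ell(\tau)\to0$ as $\tau\to-\infty$. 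For $\nu=0$ this is explicit, $U_\ell(\tau)=\mathrm{Re}(e^{z_0\tau}\eta_0)$; for $\nu>0$ the residual term $\nu e^{(1-\alpha)\tau}\Lambda^\alpha_\xi U_\ell$ is both of order $\le1$ and exponentially small as $\tau\to-\infty$, so the contraction estimate of Lemma \ref{lemma-contract} (equivalently the existence half of Proposition \ref{prop-2.1}) yields a $U_\ell$ asymptotic to the Euler eigenmode at $-\infty$. Since $\bar U\in C^\infty_c$ and the semigroups involved smooth, $U_\ell$ may be taken in $C((-\infty,\infty);H^N)$ for every $N$.

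Third, I would repeat the algebraic device of (\ref{check}). Setting $U_1:=\bar U-\tfrac12 U_\ell$ and defining $F$ by the same recipe as in the proof of Theorem \ref{Thm-1.1}, so that the force reduces to $F=-\mathcal L(\tau)\bar U+\mathbf B(\bar U,\bar U)+\tfrac14\mathbf B(U_\ell,U_\ell)$, the identity $\widetilde{\mathbf B}(U_1,U_\ell)=\widetilde{\mathbf B}(\bar U,U_\ell)-\mathbf B(U_\ell,U_\ell)$ combined with $\partial_\tau U_\ell=\mathbf L_{\bar U}(\tau)U_\ell$ shows that $U_2:=U_1+U_\ell$ solves the same equation with the same $F$; this is the $H\equiv1$, $\mathring U\equiv0$ specialization of the computation in (\ref{check}). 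Undoing the change of variables produces two fields $u_1,u_2$ solving (\ref{2D}) weakly (indeed smoothly on $(0,\infty)\times\mathbb R^2$) with one common $f$. They are distinct because $u_2-u_1$ corresponds to $U_\ell\not\equiv0$, while both attain the zero datum since the $d=2$ scaling carries a factor $t$, so $\|u_i(t)\|_{L^2_x}=t\,\|U_i(\log t)\|_{L^2_\xi}\to0$ as $t\to0$. Integrability is then immediate in $d=2$: $\|f(t)\|_{L^2_x}=\|F(\log t)\|_{L^2_\xi}$, which is bounded (the steady terms are smooth and compactly supported, and $\mathbf B(U_\ell,U_\ell)$ decays like $e^{2a_0\tau}$), whence $f\in L^1_tL^2_x$ on every $[0,T]$.

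The main obstacle is the second step: guaranteeing that the genuinely unstable eigenvalue of the Euler linearisation is not destroyed once the dissipation is switched on for $\nu>0$ and $\alpha\in[0,1]$. The decisive feature is that in self-similar variables this dissipation enters with the coefficient $e^{(1-\alpha)\tau}$, so it is a relatively lower-order, exponentially decaying perturbation of the transport dynamics; it can be absorbed either through the perturbation scheme of Proposition \ref{prop2.2} or, more directly, inside the fixed-point construction of $U_\ell$. Crucially, no smoothing from $\Lambda^\alpha$ is needed here (for $\alpha\le1$ it lies below the window of Proposition \ref{prop-3.1}), since the dilation semigroup $e^{\tau\mathbf S_0}$ already controls all the required estimates; the verification that $f\in L^1_tL^2_x$ is then purely bookkeeping, settled by the $d=2$ scaling above.
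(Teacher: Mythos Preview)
Your approach is correct but differs from the paper's in one essential choice: the self-similar scaling. The paper keeps the $\alpha$-adapted transformation (\ref{tra-1})--(\ref{tra-2}), i.e.\ $\xi=x/t^{1/\alpha}$ and $U=t^{1-1/\alpha}u$, under which the fractional dissipation $\nu\Lambda^\alpha_x$ becomes the \emph{autonomous} term $\nu\Lambda^\alpha_\xi$ in (\ref{2D-2}). Hence the full linearised operator $\mathbf L_{\bar U}$ is $\tau$-independent, Proposition~\ref{prop2.2} furnishes an eigenpair $(z,\eta)$ with $\operatorname{Re}z>0$, and $U_\ell=\operatorname{Re}(e^{z\tau}\eta)$ is written down \emph{explicitly}, with no fixed-point construction. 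You instead take the Euler scaling $\xi=x/t$, which makes the nonlinearity autonomous but turns the dissipation into the non-autonomous factor $\nu e^{(1-\alpha)\tau}\Lambda^\alpha_\xi$; this forces you to build $U_\ell$ by a perturbative contraction around the Euler eigenmode and to argue separately that the Euler semigroup controls the needed norms without smoothing. Both routes end in the same algebraic identity (\ref{check}) and the same $F$, so both work; the paper's is shorter and avoids the non-autonomous step entirely, while yours has the conceptual payoff of isolating the Euler instability as the sole dynamical input, with dissipation visibly subordinate as $\tau\to-\infty$.
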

	We only provide an outline of the proof, since the detail is very similar to the proof of Theorem \ref{Thm-1.1}.
	\begin{proof}[\textbf{proof}]
		First we use coordinate transformation (\ref{tra-1})(\ref{tra-2}) as well, then (\ref{2D}) change to:
		\begin{equation}\label{2D-2}
			\left\{
			\begin{aligned}
				&\partial_\tau {U} -\frac{\alpha-1}{\alpha} {U} - \frac{1}{\alpha} {\xi} \cdot \nabla_\xi {U} +  \mathbf{P}({U} \cdot \nabla_\xi {U}) = \nu \Lambda^\alpha_\xi {U} +{F} ,
				\\ &\nabla_\xi \cdot {U} =0,
				\\ &\lim_{\tau \to -\infty} e^{\frac{4-2\alpha}{2\alpha} \tau } \|{U}(\tau)\|_{L^2}=0.
			\end{aligned}
			\right.		
		\end{equation}
		By Proposition \ref{prop-3.1-a} \ref{prop2.2} we know there exist $\bar{U}$ such that $\mathbf{L}_{\bar{U}}$ has eigenvalue $z:{\rm Re} (z) =: a>0$ and characteristic function $\eta(\xi)$.
		So $U_l := {\rm Re} (e^{z \tau} \eta(\xi))$ is a solution for the following linear equation:
		\begin{equation}
			\left\{
			\begin{aligned}
				&\partial_\tau {U_l} = \mathbf{L}_{\bar{U}} U_l ,
				\\ &\nabla_\xi \cdot {U_l} =0,
				\\ &\lim_{\tau \to -\infty} \|{U_l}(\tau)\|_{L^2}=0.
			\end{aligned}
			\right.		
		\end{equation}
		Then we define:
		\begin{equation}
			\begin{aligned}
				U_1 :=& \bar{U} - \frac{1}{2} U_l,
				\\ U_2 :=& \bar{U} + \frac{1}{2} U_l,
				\\ F :=& - \mathbf{P}_\alpha \bar{U} + \mathbf{B}(\bar{U},\, \bar{U}) + \frac{1}{4} \mathbf{B}(U_l,\, U_l).
			\end{aligned}
		\end{equation}
		It's easy to check that $U_1,U_2$ are two different solution for equation (\ref{2D-2}), which is similar to (\ref{check}). Thus $u_1,u_2$, which are obtained through inverse coordinate transformation, are two different solutions for the equation (\ref{2D}).
	\end{proof}
	
\noindent{\bf  Acknowledgements}
This work was partially supported by 
National Key R$\&$D Program of China (No.2020YFA0712700);
Key Laboratory of Random Complex Structures and Data Science, Academy of  
Mathematics and Systems Science, Chinese Academy of  Sciences;
National Natural Science Foundation of China (Grant No.12090010,12090014,12471138).
%

\bibliographystyle{alpha}
\bibliography{ref}
\end{document}